\documentclass[11pt,twoside]{article}
\usepackage[margin=1.3in]{geometry}

\usepackage[T1]{fontenc}      % 建议加上，更好处理重音字符

\usepackage{amsfonts,amsmath,amssymb,amsthm}
\usepackage{cases,color,curves}

\usepackage{cite}
\usepackage{etoolbox}

\usepackage{bm,enumerate,graphicx,ifthen,latexsym,
            listings,makeidx,mathrsfs,psfrag,times,xcolor}

\usepackage[colorlinks,citecolor=green,linkcolor=red]{hyperref}
\usepackage[title]{appendix}
\patchcmd{\thebibliography}{\section*{\refname}}{\section{\refname}\label{sec:references}}{}{}

\def\MR#1{}   % ← 屏蔽 MR 号输出
\makeindex

\newcommand{\R}{\mathbb R}

\newtheorem{theorem}{Theorem}[section]
\newtheorem{lemma}[theorem]{Lemma}

\newcommand{\dist}{\mathrm{dist}}

\begin{document}
\raggedbottom

\title{\textbf{The direct moving plane method for weak solutions of the fractional $p$-Laplacian}}

\author{ Meiqing Xu \and Hui Yang }

\date{\today}
\maketitle

\begin{abstract}
In this paper, we develop the method of moving planes entirely in the weak formulation for the fractional $p$-Laplacian in the singular range $1<p\leq2$. We first establish a small region principle for antisymmetric functions and apply it to prove radial symmetry and monotonicity of nonnegative weak solutions of fractional $p$-Laplacian equations in a bounded domain. We also consider the nonlocal quasilinear Lane--Emden equation
$ (-\Delta)^s_pu=u^q$ in $\mathbb R^n$. In the Sobolev critical case, we establish radial symmetry, monotonicity, and precise asymptotic behavior at infinity for finite-energy weak solutions. Under a suitable decay condition, we also obtain radial symmetry for the full range $q>p-1$. Our results complete those of Chen-Li (Adv. Math., 2018: 735-758), where analogous results were obtained for $C^{1,1}$ solutions in the pointwise sense.

\medskip

\noindent\textit{Keywords:}
fractional $p$-Laplacian; method of moving planes; weak solutions;
small region principle; radial symmetry.

\medskip

\noindent\textit{MSC (2020):}
Primary 35R11, 35B06; Secondary 35B50, 35B51, 35J92.
\end{abstract}

%\tableofcontents

\section{Introduction}\label{sec:introduction}

Let $n\ge2$, $0<s<1$ and $p>1$. Up to a normalization constant, the fractional $p$-Laplacian is defined by
\[
(-\Delta)^s_pu(x)
:=\operatorname{P.V.}\int_{\mathbb R^n}
\frac{|u(x)-u(y)|^{p-2}\bigl(u(x)-u(y)\bigr)}{|x-y|^{n+sp}}\, dy.
\]
This operator arises naturally in the study of various physical phenomena, including the power-law constitutive structure associated with shear-dependent non-Newtonian fluids \cite{MalekRajagopalRuzicka1995}, a long-range interaction kernel related to L\'evy flights and anomalous transport \cite{BG,MJ}.
%Other applications include nonlinear diffusion \cite{DiBenedettoHerrero1990}, turbulent flows through porous media \cite{DiazDeThelin1994}, and variational problems arising in nonlinear elasticity and related models \cite{Ball1977,AcerbiFusco1984}.
Evolution problems driven by $(-\Delta)^s_p$  provide natural nonlinear nonlocal diffusion models; see, for instance, \cite{Vazquez2020Evolution,Vazquez2021Sublinear}. The fractional $p$-Laplacian also appears in the Euler-Lagrange equation associated with the fractional Sobolev inequality.
%\[
%\mathcal E_{s,p}(u)
%:=\frac1p\iint_{\mathbb R^n\times\mathbb R^n}
%\frac{|u(x)-u(y)|^p}{|x-y|^{n+sp}}\,dx\,dy.
%\]
Let $p_s^*:=\frac{np}{n-sp}$. Then the best constant in the fractional Sobolev inequality is determined by
\[
\mathcal S_{s,p}
:=\inf_{0\ne v\in C_c^\infty(\mathbb R^n)}
\frac{\displaystyle
\iint_{\mathbb R^n\times\mathbb R^n}
\frac{|v(x)-v(y)|^p}{|x-y|^{n+sp}}\,dx\,dy}
{\displaystyle
\left(\int_{\mathbb R^n}|v|^{p_s^*}\,dx\right)^{p/p_s^*}}.
\]
After a normalization, its extremals satisfy the critical Euler-Lagrange equation
\[
(-\Delta)^s_pu=u^{p_s^*-1} \quad\text{in }\mathbb R^n.
\]
For the fractional Sobolev spaces and the qualitative properties of extremals, we refer to \cite{MR2944369, BrascoMosconiSquassina2016}.

When $p=2$, the fractional Laplacian operator has been extensively studied in the last two decades. It is well-known that the moving plane method is one of the important tools for studying symmetry of solutions to elliptic equations. However, due to the nonlocal nature of the fractional Laplacian, it is difficult to apply the moving plane method directly. Caffarelli and Silvestre \cite{CaSi}  introduced the extension method, which reduces the nonlocal problem to a local problem in one higher dimension. In this way, the moving plane method and its variant, the moving sphere method, can be applied to study symmetry and classification of fractional order equations; see, e.g., \cite{CZ,FW1,JLX} and the references therein.
The second approach to studying fractional equations is to transform them into integral equations and then apply the moving plane method in integral form introduced by Chen-Li-Ou \cite{CLO3} or the corresponding moving spheres method developed by Li \cite{Ly}. This integral equation method has been successfully used to study fractional problems; see \cite{CLO3,CDQ,Y,ZCCY,DLQSIAM} and the references therein. The third approach is the so-called direct method, which applies the moving plane method directly to the fractional Laplacian equation, based on the maximum principle for antisymmetric functions. Regarding this method, we refer to the earlier works of Chen-Li-Li \cite{CLL}, Jarohs-Weth \cite{JW} and Jin-Xiong \cite{JX}. Later, Chen-Li-Zhang \cite{CLZ} developed the corresponding moving sphere method. Recently, Li-Xu-Yang-Zhuo \cite{li2025direct} refined the direct moving sphere method, removing its dependence on an equivalent integral representation while also weakening the assumptions on the nonlinearity.

For the general fractional $p$-Laplacian ($p \neq 2$), to the best of our knowledge, neither the extension method nor the integral equation method is available. In the framework where solutions belong to $C^{1,1}_{\mathrm{loc}}$, Chen-Li \cite{CL2} established a maximum principle and a boundary point estimate for the fractional $p$-Laplacian, and further developed the direct moving plane method to study the symmetry of positive solutions to the corresponding equation. Subsequently, in the $C^{1,1}_{\mathrm{loc}}$ framework, Wu-Chen \cite{WuChen2020} developed the sliding method for the fractional $p$-Laplacian. For more applications of the direct method, see \cite{WuYuZhang2021,DaiLiuWang2022} and the references therein. On the other hand, there has been substantial progress in the Harnack inequality and H\"{o}lder regularity for the fractional $p$-Laplacian; see \cite{BogeleinEtAlJFA2025,BogeleinEtAlCalcVar2025,DiCastroKuusiPalatucci2014,DiCastroKuusiPalatucci2016,Cozzi2017,IMS2016,MR4788673,BrascoLindgrenSchikorra2018,GarainLindgren2024} and the references therein. Recently, Biswas and Topp \cite{BiswasTopp2025} studied the Lipschitz regularity of the fractional $p$-Laplacian. When $2\le p<2/(1-s)$, Giovagnoli, Jesus and Silvestre \cite{GiovagnoliJesusSilvestre2025} established interior $C^{1,\alpha}$ estimates for the fractional $p$-harmonic functions with some $\alpha \in (0, 1)$.

Therefore, from the perspective of regularity for the fractional $p$-Laplacian, the $C^{1,1}$ assumption in the direct moving plane method of the aforementioned works is too restrictive. More recently, under the assumption $p>2$, Biswas, Roy and Sen \cite{BiswasRoySen2026} developed a moving plane method for $C^1$ positive solutions of the fractional $p$-Laplacian equation on bounded domains. The primary goal of this paper is to develop the moving plane method for weak solutions of the fractional $p$-Laplacian equation in the singular regime $1 < p < 2$, and thereby establish symmetry of positive solutions. For an open set $U\subset\mathbb R^n$, we define
\[
[u]_{W^{s,p}(U)}
:=\left(\int_U\int_U
\frac{|u(x)-u(y)|^p}{|x-y|^{n+sp}}\,dx\,dy\right)^{1/p}
\]
and
\[
W^{s,p}(\mathbb R^n)
:=\left\{u\in L^p(\mathbb R^n):[u]_{W^{s,p}(\mathbb R^n)}<\infty\right\}.
\]
For a bounded domain $\Omega$, let
\[
W^{s,p}_0(\Omega)
:=\left\{u\in W^{s,p}(\mathbb R^n):u=0\ \text{a.e. in }\Omega^c\right\}.
\]
Throughout the paper, $B_r:=B_r(0)\subset\mathbb R^n$. The basic ingredient of our moving plane method in the weak formulation is a small region principle. Without loss of generality, we formulate the bounded-domain results in the unit ball $B_1$. For $-1<\lambda<1$ and $x=(x_1,x_2,\ldots,x_n)$, set
\[
x^\lambda:=(2\lambda-x_1,x_2,\ldots,x_n),\qquad
u_\lambda(x):=u(x^\lambda),
\]
and
\[
\Sigma_\lambda:=\{x\in\mathbb R^n:x_1<\lambda\},\qquad
\Omega_\lambda:=B_1\cap\Sigma_\lambda.
\]
%At the weak level, negative minimum points of the reflected difference are unavailable. We replace the pointwise argument by testing the equation with the antisymmetric truncation $(u-u_\lambda)^+\chi_{\Sigma_\lambda}$.

\begin{theorem}[Small region principle for antisymmetric functions]
\label{lem:small-region-p-less-2}
Let $n\ge2$, $0<s<1$ and $1<p\le2$. Let $D$ be an open set such that $D\subset\Omega_\lambda$ with $|D|>0$ and $u\in W^{s,p}_0(B_1)\cap L^\infty(B_1)$ satisfy
\begin{equation}\label{eq:lemma6-assumption}
\begin{cases}
(-\Delta)^s_pu_\lambda-(-\Delta)^s_pu
+c(x)(u_\lambda-u)\ge0
        &\text{weakly in }D,\\
u_\lambda-u\ge0
        &\text{a.e. in }\Sigma_\lambda\setminus D.
\end{cases}
\end{equation}
Suppose that $-1<\lambda<0$ and
\begin{equation}\label{eq:lemma6-c-lower-bound}
        c(x)\ge-M
        \qquad\text{for a.e. }x \in D
\end{equation}
for some $M\ge0$.
Then there exists $\delta=\delta\bigl(n,s,p,M,\|u\|_{L^\infty(B_1)}\bigr)>0$ such that, if $|D|\le\delta$, then
\[
u_\lambda\ge u \qquad\text{a.e. in }\Sigma_\lambda.
\]
\end{theorem}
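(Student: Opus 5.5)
The plan is to test the weak inequality in \eqref{eq:lemma6-assumption} with the antisymmetric truncation of the reflected difference and obtain a coercive lower bound whose constant blows up as $|D|\to0$. Set $w:=u_\lambda-u$, which is antisymmetric, $w(x^\lambda)=-w(x)$, and let $\varphi:=w^-\chi_{\Sigma_\lambda}=(u-u_\lambda)^+\chi_{\Sigma_\lambda}$. By the second line of \eqref{eq:lemma6-assumption}, $\varphi$ vanishes a.e. in $\Sigma_\lambda\setminus D$. It also vanishes outside $\Sigma_\lambda$, so it is supported in $\overline D$. Since $u,u_\lambda\in W^{s,p}(\mathbb R^n)\cap L^\infty$, $\varphi$ lies in $W^{s,p}$ and vanishes off $D$. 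I would check, via the usual density argument, that it is an admissible nonnegative test function for the weak inequality in $D$. Writing $J(t)=|t|^{p-2}t$ and $K(z)=|z|^{-n-sp}$, testing gives
\[
\iint_{\mathbb R^n\times\mathbb R^n}\bigl[J(u_\lambda(x)-u_\lambda(y))-J(u(x)-u(y))\bigr](\varphi(x)-\varphi(y))K(x-y)\,dx\,dy+\int_D c\,w\,\varphi\ge0 .
\]
Since $w\varphi=-\varphi^2$ and $c\ge-M$, the last term is at most $M\int_D\varphi^2$.

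Next I reduce the double integral to $\Sigma_\lambda\times\Sigma_\lambda$ by splitting $\mathbb R^n=\Sigma_\lambda\cup\Sigma_\lambda^c$ and reflecting $y\mapsto y^\lambda$ in the cross terms. Reflection swaps the roles of $u$ and $u_\lambda$, and $\varphi$ vanishes on $\Sigma_\lambda^c$. This produces integrands with the two kernels $K(x-y)$ and $K(x-y^\lambda)$, where $K(x-y)\ge K(x-y^\lambda)$ for $x,y\in\Sigma_\lambda$. The resulting pointwise algebra is the main obstacle. One must show that, after grouping, the nonlocal term is bounded above by
\[
-c_0\iint_{\Sigma_\lambda\times\Sigma_\lambda}\frac{|\varphi(x)-\varphi(y)|^2}{|x-y|^{n+sp}}\,dx\,dy-c_0\int_D\varphi(x)^2\int_{\Sigma_\lambda\setminus D}\frac{dy}{|x-y|^{n+sp}}\,dx-c_0\int_D\varphi(x)^2\int_{\Sigma_\lambda}\frac{dy}{|x-y^\lambda|^{n+sp}}\,dx .
\]
To get this I would split $\Sigma_\lambda$ into the set where $w<0$ (where $\varphi>0$) and the set where $w\ge0$. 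On the pairs where both points have $w<0$, the term is $-(J(a)-J(b))(a-b)$ plus a cross remainder. On pairs where only one point has $w<0$, one uses monotonicity of $J$ together with $u_\lambda(y)\ge u(y)$ to get the right sign. The kernel difference $K(x-y)-K(x-y^\lambda)\ge0$ is then used to absorb the reflected contributions, exactly as in the pointwise argument of Chen--Li \cite{CL2}, but now integrated.

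The singular range $1<p\le2$ enters through the elementary inequality
\[
(J(a)-J(b))(a-b)\ge (p-1)\,|a-b|^2(|a|+|b|)^{p-2}.
\]
Since $|a|,|b|\le 2\|u\|_{L^\infty}$ and $p-2\le0$, this gives $(J(a)-J(b))(a-b)\ge c_1|a-b|^2$ with $c_1=c_1(p,\|u\|_{L^\infty})$. This is where $\delta$ picks up its dependence on $\|u\|_{L^\infty(B_1)}$. The same inequality, applied with $a-b=\pm\varphi(x)$ for $y$ in the region $\{w\ge0\}\supset\Sigma_\lambda\setminus D$, yields the second term in the display, and the reflected interactions yield the third. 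In particular, for every $x\in D$,
\[
\int_{\mathbb R^n\setminus D}\frac{dy}{|x-y|^{n+sp}}\ge \int_{|y-x|>r_D}\frac{dy}{|x-y|^{n+sp}}=C(n,s,p)\,|D|^{-sp/n},
\]
where $|B_{r_D}|=|D|$. The region $\mathbb R^n\setminus D$ is covered by $\Sigma_\lambda\setminus D$ together with the reflections $\{y^\lambda:y\in\Sigma_\lambda\}$ (up to a null set), which gives the $|D|^{-sp/n}$ lower bound.

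Combining the pieces gives
\[
c_0C\,|D|^{-sp/n}\int_D\varphi^2\le M\int_D\varphi^2 .
\]
Choose $\delta$ with $c_0C\,\delta^{-sp/n}>M$. Then $|D|\le\delta$ forces $\varphi\equiv0$, that is, $u_\lambda\ge u$ a.e. in $\Sigma_\lambda$. The hypothesis $\lambda<0$ only ensures that $\Omega_\lambda$, and hence $D$, lies in the region where the reflection is well defined and $u_\lambda$ is controlled by the same $L^\infty$ bound. The step I would spend most care on is the sign bookkeeping in the reduction to $\Sigma_\lambda\times\Sigma_\lambda$. If a clean pointwise inequality fails there, I would fall back on proving the needed sign by integrating along the segment $t\mapsto J'(b+t(a-b))$, where $J'(t)=(p-1)|t|^{p-2}$, and using the ordering $u_\lambda\ge u$ on $\{w\ge0\}$.
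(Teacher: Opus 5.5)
Your outline matches the paper's. You use the same test function $\varphi=(u-u_\lambda)^+\chi_{\Sigma_\lambda}$, the same $L^\infty$-linearization of $J$ available for $1<p\le2$, and the same comparison of a coercive bound $\gtrsim|D|^{-sp/n}\int\varphi^2$ with $M\int\varphi^2$. Your closing step is correct and is a small variant of the paper's. You keep both the direct term over $\Sigma_\lambda\setminus D$ and the reflected term over $\Sigma_\lambda$, observe that together they give $\int_{\mathbb R^n\setminus D}|x-y|^{-n-sp}\,dy$, and apply the rearrangement bound. The paper instead folds the reflected kernel back into $|x-y|^{-n-sp}$ and uses the half-space estimate of Lemma~\ref{lem:halfspace-exterior-potential}.

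The gap is the step you yourself call the main obstacle. It is the actual content of the proof, and you leave it as ``one must show''. The recipe you sketch for it also fails as stated. Take $x$ with $w(x)<0$ and $y\in\Sigma_\lambda$ with $w(y)\ge0$. The reflected cross term is
\[
\bigl[J(u_\lambda(x)-u(y))-J(u(x)-u_\lambda(y))\bigr]\varphi(x)\,|x-y^\lambda|^{-n-sp},
\]
and $(u_\lambda(x)-u(y))-(u(x)-u_\lambda(y))=w(x)+w(y)$. This is positive whenever $w(y)>|w(x)|$. So monotonicity of $J$ together with $u_\lambda(y)\ge u(y)$ cannot give this term the right sign, and ``absorbing'' it needs an identity you do not supply.

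The missing idea is the paper's regrouping. In the $\Sigma_\lambda\times\Sigma_\lambda$ part, split the kernel as
$|x-y|^{-n-sp}=\bigl(|x-y|^{-n-sp}-|x-y^\lambda|^{-n-sp}\bigr)+|x-y^\lambda|^{-n-sp}$. Symmetrize the $|x-y^\lambda|^{-n-sp}$ slice, using that this kernel is symmetric in $(x,y)$, and add it to the cross terms. This gives $\langle(-\Delta)^s_pu-(-\Delta)^s_pu_\lambda,\varphi\rangle=I_1+I_2'+I_2''$, where $I_1$ carries the nonnegative kernel difference and
\[
I_2'+I_2''=\iint_{\Sigma_\lambda\times\Sigma_\lambda}\frac{\bigl[J(u(x)-u(y))-J(u_\lambda(x)-u(y))\bigr]+\bigl[J(u(x)-u_\lambda(y))-J(u_\lambda(x)-u_\lambda(y))\bigr]}{|x-y^\lambda|^{n+sp}}\,\varphi(x)\,dx\,dy .
\]
In each bracket only the $x$-slot changes, from $u(x)$ to $u_\lambda(x)$. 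Hence each bracket is at least $c\,\varphi(x)$ wherever $\varphi(x)>0$, for every $y$, with no case split on the sign of $w(y)$. The integrand of $I_1$ is at least $c\,|\varphi(x)-\varphi(y)|^2$ times the kernel difference, by $(a-b)(a^+-b^+)\ge|a^+-b^+|^2$. Together these give exactly your target display. The ordering $u_\lambda\ge u$ off $D$ is needed only to confine the support of $\varphi$ to $D$.

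A smaller point concerns admissibility. $\varphi\in W^{s,p}(\mathbb R^n)$ does not follow merely from $u,u_\lambda\in W^{s,p}\cap L^\infty$, because the cut-off $\chi_{\Sigma_\lambda}$ creates the Hardy-type term $\int_{\Sigma_\lambda}\varphi(x)^p(\lambda-x_1)^{-sp}\,dx$. The paper controls this term through the antisymmetry of $\psi$ (Lemma~\ref{lem:bounded-domain-test-admissibility}), obtaining $[\varphi]_{W^{s,p}(\mathbb R^n)}^p\le\frac12[\psi]_{W^{s,p}(\mathbb R^n)}^p$.
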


The smallness condition in Theorem~\ref{lem:small-region-p-less-2} is imposed on the measure of a set, rather than on the width of a narrow region. Related small-domain and narrow-region principles for local $p$-Laplace equations were developed in \cite{Damascelli1998,DamascelliSciunzi2004,Sciunzi2005,FarinaMontoroSciunzi2012,FarinaMontoroSciunzi2013}. Our proof is entirely variational and requires neither a pointwise minimum nor pointwise evaluation of the operator.

As a first application, we obtain symmetry for a bounded-domain equation with a general locally Lipschitz nonlinearity.

\begin{theorem}\label{Liouville thm}
Let $n\ge2$, $0<s<1$ and $1<p\le2$. Let $f:\mathbb R\to\mathbb R$ be locally Lipschitz continuous with $f(0)\ge0$. Let $u\in W^{s,p}_0(B_1)\cap L^\infty(B_1)$ be a nontrivial nonnegative weak solution of
\begin{equation}\label{main eq}
\begin{cases}
(-\Delta)^s_pu=f(u) &\text{in }B_1,\\
u=0 &\text{in }B_1^c.
\end{cases}
\end{equation}
Then $u$ is radially symmetric with respect to the origin and radially nonincreasing in $B_1$.
\end{theorem}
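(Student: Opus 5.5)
We run the moving plane method in the $x_1$-direction, using Theorem~\ref{lem:small-region-p-less-2} as the only comparison tool. Set $w_\lambda:=u_\lambda-u$ for $-1<\lambda<0$. For $x\in\Omega_\lambda$ the reflection $x^\lambda$ lies in $B_1$, and $(-\Delta)^s_p$ commutes with reflections. Hence $u_\lambda$ solves $(-\Delta)^s_pu_\lambda=f(u_\lambda)$ weakly in $\Omega_\lambda$. Define
\[
c_\lambda(x):=-\frac{f(u_\lambda(x))-f(u(x))}{u_\lambda(x)-u(x)}
\]
where $u_\lambda\ne u$, and $c_\lambda:=0$ otherwise. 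Then
\[
(-\Delta)^s_pu_\lambda-(-\Delta)^s_pu+c_\lambda w_\lambda=0
\]
weakly in $\Omega_\lambda$. Since $f$ is locally Lipschitz and $0\le u\le\|u\|_{L^\infty}$, we get $|c_\lambda|\le M$, where $M$ is the Lipschitz constant of $f$ on $[0,\|u\|_{L^\infty}]$. On $\Sigma_\lambda\setminus\Omega_\lambda$ we have $u=0\le u_\lambda$, so $w_\lambda\ge0$ a.e. there. Thus the hypotheses of Theorem~\ref{lem:small-region-p-less-2} hold with $D=\Omega_\lambda$ whenever $|\Omega_\lambda|\le\delta$, where $\delta=\delta(n,s,p,M,\|u\|_{L^\infty})$ is independent of $\lambda$. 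In particular $w_\lambda\ge0$ a.e. in $\Sigma_\lambda$ for $\lambda$ close to $-1$.

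Set $\lambda_0:=\sup\{\lambda\in(-1,0]:w_\mu\ge0\text{ a.e. in }\Sigma_\mu\text{ for all }\mu\in(-1,\lambda]\}$. The goal is $\lambda_0=0$. Suppose $\lambda_0<0$. By continuity of translations in $L^1$, $w_{\lambda_0}\ge0$ a.e. in $\Sigma_{\lambda_0}$.

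\textbf{Step 1: strong maximum principle.} We show $w_{\lambda_0}>0$ a.e. in $\Omega_{\lambda_0}$. Since $w_{\lambda_0}$ is antisymmetric and nonnegative on $\Sigma_{\lambda_0}$, it cannot vanish identically: $u$ is nontrivial and vanishes on $B_1^c$, and $\lambda_0<0$, so there is a set of positive measure in $B_1\cap(B_1)^{\lambda_0}$ complement region where $u_{\lambda_0}>0=u$ after reflection. A weak strong maximum principle for antisymmetric supersolutions, $(-\Delta)^s_pu_{\lambda_0}-(-\Delta)^s_pu\ge -Mw_{\lambda_0}$, then gives $w_{\lambda_0}>0$, or at least $\operatorname{ess\,inf}_K w_{\lambda_0}>0$ on each compact $K\subset\Omega_{\lambda_0}$.

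For this step we would use the continuity of weak solutions (interior Hölder regularity, cited in the introduction) and argue by contradiction. Assume $w_{\lambda_0}$ has zero essential infimum near a point. Test with a cutoff supported near that point. The nonlocal contribution of the positive mass of $w_{\lambda_0}$ elsewhere gives a strictly positive term, using the monotonicity of $t\mapsto|t|^{p-2}t$ and the inequality $|x-y|<|x-y^{\lambda_0}|$ for $x,y\in\Sigma_{\lambda_0}$. That positive term contradicts the equation.

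\textbf{Step 2: moving the plane past $\lambda_0$.} Choose a compact $K\subset\Omega_{\lambda_0}$ with $|\Omega_{\lambda_0}\setminus K|\le\delta/2$. By continuity, $w_{\lambda_0}\ge\eta>0$ on $K$. Uniform continuity of $u$ then gives $w_\lambda\ge0$ on $K$ for $\lambda\in[\lambda_0,\lambda_0+\varepsilon)$, and also $|\Omega_\lambda\setminus K|\le\delta$. Apply Theorem~\ref{lem:small-region-p-less-2} with $D=\Omega_\lambda\setminus K$; this set is open and carries the same bound on $c_\lambda$. It yields $w_\lambda\ge0$ in $\Sigma_\lambda$, contradicting the definition of $\lambda_0$. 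Hence $\lambda_0=0$ and $u(-x_1,x')\ge u(x_1,x')$ for $x_1<0$.

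Running the same argument in the direction $-e_1$ gives equality, so $u$ is even in $x_1$. Rotational invariance gives the same in every direction, so $u$ is radial. Monotonicity follows from $w_\lambda\ge0$ for all $\lambda<0$: for $x_1<\lambda<0$ this says $u(2\lambda-x_1,x')\ge u(x)$, i.e. $u$ is nondecreasing in $x_1$ on $\{x_1<0\}$. Hence $u$ is radially nonincreasing.

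The main obstacle is Step~1, the strong maximum principle in the weak setting, together with the continuity needed to obtain positive lower bounds on compact sets. If continuity is not available, a fallback is a weak Harnack-type estimate for the linearized antisymmetric problem. Since $1<p\le2$, the linearized kernel $\int_0^1|tu_\lambda(x)-tu_\lambda(y)+(1-t)(u(x)-u(y))|^{p-2}dt$ is bounded below by a positive multiple of $(\text{bounded})^{p-2}$, because $u\in L^\infty$. This makes the nonlocal positivity argument workable.
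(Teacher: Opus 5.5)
Your initial step near $\lambda=-1$ (small region principle with $D=\Omega_\lambda$ and $|c_\lambda|\le M$) is correct, and your Step~2 is a valid Berestycki--Nirenberg-type continuation \emph{once Step~1 is known}. The genuine gap is Step~1 itself.

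\textbf{The strong maximum principle is missing.} You need $w_{\lambda_0}>0$ throughout $\Omega_{\lambda_0}$, or at least outside a set of measure $<\delta/2$. This is a strong maximum principle for the antisymmetric difference of two \emph{weak} solutions of the nonlinear equation with $1<p\le 2$. It is not an available tool, and your sketch does not prove it. In the weak formulation you cannot evaluate the operator at a zero of $w_{\lambda_0}$. Test instead with a cutoff $\zeta\ge0$ supported in a small ball $B$ around such a point. Besides the nonlocal contribution from where $w_{\lambda_0}>0$, you then get the local interaction
\[
\frac12\iint_{B\times B}\frac{a(x,y)\bigl(w_{\lambda_0}(x)-w_{\lambda_0}(y)\bigr)\bigl(\zeta(x)-\zeta(y)\bigr)}{|x-y|^{n+sp}}\,dx\,dy,
\]
where $a(x,y):=(p-1)\int_0^1|u(x)-u(y)+t(w_{\lambda_0}(x)-w_{\lambda_0}(y))|^{p-2}\,dt$. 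This term has no sign, and continuity of $w_{\lambda_0}$ does not control it.

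Your fallback does not close the gap. The lower bound $a\ge(p-1)(2\|u\|_{L^\infty(B_1)})^{p-2}$ is true. But $a$ is unbounded above for $p<2$, and $a(x,y)|x-y|^{-n-sp}$ need not dominate $a(x,y^{\lambda_0})|x-y^{\lambda_0}|^{-n-sp}$. So the linear antisymmetric maximum principle and weak Harnack machinery do not apply as they stand. You would have to prove a De Giorgi-type weak Harnack inequality for this linearized problem, which is a substantial result in its own right.

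\textbf{A smaller gap: non-triviality of $w_{\lambda_0}$.} Your argument that $w_{\lambda_0}\not\equiv0$ needs $u>0$ on a set of positive measure in the part of $B_1$ that is the reflection of $\Sigma_{\lambda_0}\setminus B_1$. Nontriviality of $u$ alone does not give this. This is exactly where $f(0)\ge0$ enters, and you never use that hypothesis. One way to fill it: if $u\equiv0$ on an open $U\subset B_1$, test with $0\le\zeta\in C_c^\infty(U)$, $\zeta\not\equiv0$. This gives
\[
-\int_U\zeta(x)\int_{\mathbb R^n}u(y)^{p-1}|x-y|^{-n-sp}\,dy\,dx=f(0)\int_U\zeta\,dx\ge0,
\]
which is a contradiction since the left side is negative. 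The paper instead quotes $u>0$ in $B_1$ from the literature.

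\textbf{How the paper avoids Step~1.} The paper never proves $w_{\lambda_0}>0$ in $\Omega_{\lambda_0}$. It uses only one ball $B_r(x_*)\Subset\Omega_{\lambda_0}$ with $w_{\lambda_0}\ge2\eta$, obtained from $w_{\lambda_0}\not\equiv0$ and continuity. For $\lambda\in(\lambda_0,\lambda_0+\varepsilon_0)$ it then proves the coercivity bound of Lemma~\ref{lem:reflected-energy-coercivity}:
\begin{itemize}
\item restricting $I_1$ to $y\in B_r(x_*)$ gives an anchor term $b_0\int(\lambda-x_1)\varphi\,dx$;
\item the cross terms $I_2'+I_2''$ give $a_0\int(\lambda-x_1)^{-sp}\varphi^2\,dx$;
\item Lemma~\ref{lem:two-scale-algebraic-inequality} merges these into $c_0\int\varphi^\theta\,dx$ with $\theta=\frac{sp+2}{sp+1}<2$.
\end{itemize}
The reaction term is at most $L_u\int\varphi^2\le L_u\|\varphi\|_{L^\infty}^{2-\theta}\int\varphi^\theta$. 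Since $\|\varphi\|_{L^\infty}\to0$ as $\lambda\downarrow\lambda_0$ by uniform continuity of $u$ on $\mathbb R^n$, this forces $\varphi\equiv0$. To repair your proposal, replace Steps~1--2 by this argument, or supply a proof of a weak antisymmetric strong maximum principle.
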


The assumption $f(0)\ge0$ is used only to ensure strict positivity of a nontrivial nonnegative solution and is weaker than assuming $f>0$ on $(0,\infty)$. In the special case $f(t)=t^q$, the boundedness assumption can be dropped for $1\le q\le p_s^*-1$, since weak solutions in $W^{s,p}_0(B_1)$ are known to be bounded (see Iannizzotto-Mosconi \cite[Proposition~2.3]{MR4985486}).

We next turn to the equation in the whole space. Define
\[
\mathcal D^{s,p}(\mathbb R^n) := \left\{ v\in L^{p_s^*}(\mathbb R^n): [v]_{W^{s,p}(\mathbb R^n)}<\infty \right\}.
\]

\begin{theorem}\label{thm:critical}
Let $n\ge2$, $0<s<1$ and $1<p\le2$. Let $u\in\mathcal D^{s,p}(\mathbb R^n)\cap C(\mathbb R^n)$
%$u\in W^{s,p}_{\mathrm{loc}}(\mathbb R^n) \cap L^{p_s^*}(\mathbb R^n)\cap C(\mathbb R^n)$
be a nontrivial nonnegative weak solution of
\begin{equation}\label{eq:whole-space-critical-equation}
        (-\Delta)^s_pu=u^{p_s^*-1}
        \quad\text{in }\mathbb R^n.
\end{equation}
Then $u$ is radially symmetric and radially nonincreasing about some point $x_0\in \mathbb R^n$. Moreover, there exists a constant \(A_\infty>0\) such that
\begin{equation}\label{eq:asy}
  u(x) = A_\infty |x-x_0|^{-\frac{n-sp}{p-1}} \bigl(1+o(1)\bigr) \quad\text{as }|x|\to\infty.
\end{equation}
\end{theorem}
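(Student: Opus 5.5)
The plan has three stages: first establish the a priori decay and asymptotics of $u$ at infinity, then run the moving plane method in the weak formulation on $\mathbb R^n$ in every direction, and finally recover the precise asymptotic constant.

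\textbf{Step 1: a priori properties.} By the known regularity theory for finite-energy solutions of the critical equation (De Giorgi/Moser iteration as in Brasco--Mosconi--Squassina), $u\in L^\infty(\mathbb R^n)$. By the strong minimum principle for weak supersolutions, $u>0$ everywhere. Next we need sharp two-sided decay
\[
c^{-1}|x|^{-\frac{n-sp}{p-1}}\le u(x)\le c\,|x|^{-\frac{n-sp}{p-1}}\quad\text{for }|x|\ge1 .
\]
The upper bound comes from comparison with the fundamental solution $\Gamma(x)=|x|^{-\frac{n-sp}{p-1}}$ (which is $(-\Delta)^s_p$-harmonic away from the origin). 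The potential $u^{p_s^*-2}$ lies in $L^{n/(sp)}$ and is small outside large balls, so one can compare $u$ with $C\Gamma$ outside $B_R$ (the Brasco--Mosconi--Squassina argument). The lower bound follows from comparison of the supersolution $u$ with $\varepsilon\Gamma$ outside a ball.

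\textbf{Step 2: moving planes in direction $x_1$.} Let $w_\lambda=u_\lambda-u$ on $\Sigma_\lambda$; it is antisymmetric. By the mean value theorem, $(-\Delta)^s_pu_\lambda-(-\Delta)^s_pu+c_\lambda(x)w_\lambda=0$ with $c_\lambda=-(p_s^*-1)\xi^{p_s^*-2}\ge -C u^{p_s^*-2}$ on $\{w_\lambda<0\}$, where $u\ge u_\lambda$. Theorem~\ref{lem:small-region-p-less-2} is stated on a bounded set with a uniform bound on $c$, so here I would redo its variational proof in $\mathbb R^n$. The plan is to test with $w_\lambda^-\chi_{\Sigma_\lambda}$ (antisymmetrized), lower-bound the nonlocal form, which for $1<p\le2$ requires the weighted inequality $(|a|^{p-2}a-|b|^{p-2}b)(a-b)\ge c|a-b|^2(|a|+|b|)^{p-2}$, and absorb the potential term via the fractional Sobolev inequality using the smallness of $\|u^{p_s^*-2}\|_{L^{n/(sp)}(\{w_\lambda<0\})}$. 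This gives a ``decay at infinity'' principle: for $\lambda$ very negative, $\{w_\lambda<0\}\subset\Sigma_\lambda$ lies far away, the potential is small there, and so $w_\lambda\ge0$. We then define $\lambda_0=\sup\{\lambda: w_\mu\ge0\ \forall\mu\le\lambda\}$, which is finite by the decay asymptotics. If $w_{\lambda_0}\not\equiv0$, the strong maximum principle for antisymmetric weak supersolutions gives $w_{\lambda_0}>0$ in $\Sigma_{\lambda_0}$. For $\lambda$ slightly larger than $\lambda_0$, the negative set then lies in a small-measure region near $T_{\lambda_0}$ union a neighborhood of infinity, by continuity and the two-sided decay. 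The same inequality (small $L^{n/(sp)}$ norm of the potential on that set) forces $w_\lambda\ge0$, contradicting the maximality of $\lambda_0$. Hence $u$ is symmetric about $T_{\lambda_0}$.

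Applying this in every direction gives a common center $x_0$, radial symmetry, and radial monotonicity.

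\textbf{Step 3: asymptotics.} The asymptotic \eqref{eq:asy} follows by blow-down. The functions $R^{\frac{n-sp}{p-1}}u(Rx)$ are uniformly bounded on annuli by Step 1, and they solve equations whose right-hand side tends to $0$. By H\"older compactness they converge to a radial $(-\Delta)^s_p$-harmonic function on $\mathbb R^n\setminus\{0\}$ bounded by $C|x|^{-\frac{n-sp}{p-1}}$; Liouville-type uniqueness identifies it as $A\Gamma$. Alternatively, one can use a Kelvin-type argument. Showing $A$ is independent of the subsequence uses monotonicity of $r^{\frac{n-sp}{p-1}}u(r)$ via comparison.

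\textbf{Main obstacle.} I expect the hardest part to be the whole-space small region estimate in the singular range. The weight $(|a|+|b|)^{p-2}$ degenerates, and the coercivity must be transferred to a Sobolev norm of $w_\lambda^-$ using the $L^\infty$ bound and the decay of $u$. This is where the sharp decay from Step 1 is essential.
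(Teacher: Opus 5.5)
\textbf{Step 2 does not close at infinity.} Once you bound the weight $(|a|+|b|)^{p-2}$ from below by the constant $(4\|u\|_{L^\infty})^{p-2}$, the form only controls $\iint_{\Sigma_\lambda\times\Sigma_\lambda}|\varphi(x)-\varphi(y)|^2|x-y|^{-n-sp}\,dx\,dy$. Closing with the Sobolev inequality of order $sp/2$ and H\"older then needs $\|u^{p_s^*-2}\|_{L^{n/(sp)}(\Sigma_\lambda^-)}$ to be small.

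That exponent bookkeeping works only for $p=2$:
\begin{itemize}
\item For $p<2$ you have $(p_s^*-2)\frac{n}{sp}<p_s^*$, so $u\in L^{p_s^*}$ gives nothing. The function that does lie in $L^{n/(sp)}$ is $u^{p_s^*-p}$, not $u^{p_s^*-2}$; your Step 1 conflates the two.
\item Even granting $u\asymp|x|^{-\frac{n-sp}{p-1}}$, you get $u^{p_s^*-2}\in L^{n/(sp)}(\mathbb R^n\setminus B_R)$ only if $n(p-2)+sp(3-p)>0$.
\item For $n=3$, $s=\tfrac12$, $p=\tfrac32$ one has $p_s^*=2$, so the potential is constant.
\item For $n=2$, $s=\tfrac14$, $p=\tfrac32$ one has $p_s^*<2$, so $u^{p_s^*-2}\to\infty$ at infinity.
\end{itemize}
You have no control on the shape of $\Sigma_\lambda^-$ near infinity. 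So neither starting the plane from $-\infty$ nor moving past $\lambda_0$ is justified.

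\textbf{The missing idea} is to keep the singular weight, and to do so only in the reflected part of the form. The terms with kernel $|x-y^\lambda|^{-n-sp}$ involve only $u(x)$, $u_\lambda(x)$, $u(y)$. Take $a=\lambda-x_1$ and $z=x-\tfrac32ae_1$. A weak Harnack inequality on $B_a(z)$, combined with Jensen for the convex map $t\mapsto t^{(p-2)/(p-1)}$, gives
$\int_{\Sigma_\lambda}(|u(x)-u(y)|+|u_\lambda(x)-u(y)|)^{p-2}|x-y^\lambda|^{-n-sp}\,dy\ge c(\lambda-x_1)^{-sp}u(x)^{p-2}$ (Lemma~\ref{lem:whole-space-K-lower}).
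This dominates the reaction $c_qu^{p_s^*-2}$ pointwise wherever $|x|^{sp}u^{p_s^*-p}=(|x|^{\frac{n-sp}{p}}u)^{p_s^*-p}$ is small.

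That decay is proved in Lemma~\ref{lem:critical-finite-energy-decay} by a doubling/blow-up argument using only $u\in L^{p_s^*}$ and scale invariance. So neither a Sobolev inequality nor sharp decay is needed before symmetry. This matters, because your sharp two-sided bound in Step 1 is only asserted for non-radial solutions; Brasco--Mosconi--Squassina cover radially monotone ones. The paper invokes their Remark~1.2 only after symmetry is proved, which gives \eqref{eq:asy} directly.

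\textbf{The strong maximum principle at $\lambda_0$ is also unsupported.} You use a strong maximum principle for antisymmetric weak supersolutions there, but for $p\neq2$ this is not an available result for weak solutions. The paper avoids it.
\begin{itemize}
\item From $u_{\lambda_0}\ge u$, $u_{\lambda_0}\not\equiv u$ and continuity, it picks a ball $B_r(x_*)\Subset\Sigma_{\lambda_0}$ with $u_\lambda-u\ge\eta$ there for all $\lambda$ near $\lambda_0$.
\item It uses this ball as an anchor in the $J_1$ term, where $|x-y|^{-n-sp}-|x-y^\lambda|^{-n-sp}\gtrsim(\lambda-x_1)(\lambda-y_1)$.
\item This gives a lower bound $b_*\int\varphi_\varepsilon$ on $B_R\cap\{\lambda-x_1\ge\rho\}$. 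There the reaction is at most $C\,\omega_{R_1}(2(\lambda-\lambda_0))^{\min\{1,q\}}\int\varphi_\varepsilon$, so it is absorbed.
\item The thin strip $\{\lambda-x_1<\rho\}$ and the region $\{|x|\ge R\}$ are handled by the weighted lower bound above.
\end{itemize}
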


Since $L^{p_s^*}(\mathbb R^n)\subset L_{sp}^{p-1}(\mathbb R^n)$ by H\"older's inequality, no separate tail assumption is needed in Theorem~\ref{thm:critical}. In the $C^{1,1}$ framework, Chen-Li \cite{CL2} obtained whole-space symmetry under additional asymptotic hypotheses, while Brasco-Mosconi-Squassina \cite{BrascoMosconiSquassina2016} proved the sharp decay of fractional Sobolev extremals and, more generally, of fixed-sign radial monotone solutions. Theorem~\ref{thm:critical} establishes radial symmetry and monotonicity for any nonnegative weak solution in $L^{p_s^*}(\mathbb R^n)$, and then yields the precise asymptotic behavior with no prior decay assumption.

Finally, we treat the nonlocal quasilinear Lane--Emden equation in the full range $q>p-1$. Define
\[
W^{s,p}_{\mathrm{loc}}(\mathbb R^n)
:=\left\{u\in L^p_{\mathrm{loc}}(\mathbb R^n):
[u]_{W^{s,p}(U)}<\infty\ \text{for every open }U\Subset\mathbb R^n\right\}
\]
and
\[
L_{sp}^{p-1}(\mathbb R^n)
:=\left\{u\in L^{p-1}_{\mathrm{loc}}(\mathbb R^n):
\int_{\mathbb R^n}\frac{|u(x)|^{p-1}}{(1+|x|)^{n+sp}}\,dx<\infty\right\}.
\]
Given $e\in\mathbb S^{n-1}$ and $\lambda\in\mathbb R$, define
\[
x^{\lambda,e}:=x+2(\lambda-x\cdot e)e,
\qquad
u_{\lambda,e}(x):=u(x^{\lambda,e}),
\]
\[
\Sigma_{\lambda,e}:=\{x\in\mathbb R^n:x\cdot e<\lambda\},
\qquad
\Sigma^-_{\lambda,e}:=\{x\in\Sigma_{\lambda,e}:u(x)>u_{\lambda,e}(x)\}.
\]
Let $c_p$ be the constant in \eqref{eq:whole-space-reflected-energy}, and let $c_H$ and $c_H'$ be the constants in Lemma~\ref{lem:whole-space-K-lower}. Fix $c_*=c_*(n,s,p,q)>0$ such that
\begin{equation}\label{c}
2\max\{1,q\}c_*
\le c_p\min\{c_H,c_H'\}.
\end{equation}

\phantomsection
\label{eq:RD-prime}
\noindent\textup{(D)}
We say that $u$ satisfies the \emph{decay condition} if, for every $e\in\mathbb S^{n-1}$, there exists $R_0>0$ such that
\[
\operatorname*{sup}_{\substack{x\in\Sigma^-_{\lambda,e}\\ |x|\ge R}}
|x|^{sp}u(x)^{q-p+1}\le c_*
\qquad\text{for every }R>R_0,
\]
uniformly with respect to $\lambda\in\mathbb R$.

\begin{theorem}\label{thm:general}
Let $n\ge2$, $0<s<1$, $1<p\le2$ and $q>p-1$. Let $u\in W^{s,p}_{\mathrm{loc}}(\mathbb R^n) \cap L_{sp}^{p-1}(\mathbb R^n)\cap C(\mathbb R^n)$
be a nontrivial nonnegative weak solution of
\begin{equation}\label{eq:whole-space-lane-emden}
        (-\Delta)^s_pu=u^q
        \qquad\text{in }\mathbb R^n.
\end{equation}
Suppose that $u$ satisfies the decay condition \hyperref[eq:RD-prime]{\textup{(D)}}. Then $u$ is radially symmetric and radially nonincreasing about some point in $\mathbb R^n$.
\end{theorem}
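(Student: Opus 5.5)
The statement to prove is Theorem~\ref{thm:general}. The plan is the direct moving plane method in weak form, run in an arbitrary direction $e\in\mathbb S^{n-1}$. By rotation invariance it suffices to treat $e=e_1$. Write $w_\lambda:=u_\lambda-u$ and test the difference of the weak equations for $u_\lambda$ and $u$ with the antisymmetric truncation
\[
\varphi:=(u-u_\lambda)^+\chi_{\Sigma_\lambda}.
\]
Since $u$ is only in $W^{s,p}_{\mathrm{loc}}\cap L^{p-1}_{sp}$, this test function is first cut off by $\eta_R\varphi$ with $\eta_R$ supported in $B_{2R}$. The tail contributions are then controlled using the tail condition and the fact that $\varphi$ vanishes where $u\le u_\lambda$.

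On the right-hand side, the mean value theorem gives
\[
u^q-u_\lambda^q\le \max\{1,q\}\,u^{q-1}(u-u_\lambda) \quad\text{on } \Sigma^-_{\lambda}.
\]
The case $q<1$ is handled by concavity, so that only $u^{q-1}$ appears.

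On the left-hand side, I would use the reflected energy inequality \eqref{eq:whole-space-reflected-energy} with constant $c_p$. For $p\le2$ it bounds the left-hand side from below by a weighted Gagliardo-type energy of $\varphi$, with weight $(|u(x)-u(y)|+|u_\lambda(x)-u_\lambda(y)|)^{p-2}$. I then combine this with Lemma~\ref{lem:whole-space-K-lower}, a weighted Hardy/Sobolev-type lower bound with constants $c_H,c_H'$, which yields
\[
\text{LHS}\ \ge\ c_p\min\{c_H,c_H'\}\int_{\Sigma^-_\lambda}|x|^{-sp}u^{p-2}\varphi^2\,dx.
\]
Here the weight $u^{p-2}$ comes from $p\le2$ and the fact that $u\ge u_\lambda$ on the support of $\varphi$.

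\textbf{Step 1 (starting the plane).} Take $\lambda$ very negative. Then $\Sigma_\lambda^-\subset\{|x|\ge R\}$ with $R>R_0$. The decay condition (D) gives
\[
\max\{1,q\}\,u^{q-1}\le \max\{1,q\}\,c_*|x|^{-sp}u^{p-2}\quad\text{on } \Sigma^-_\lambda .
\]
By \eqref{c}, this right-hand side is at most half of the lower bound above. Hence
\[
\tfrac12 c_p\min\{c_H,c_H'\}\int |x|^{-sp}u^{p-2}\varphi^2\le0,
\]
so $\varphi\equiv0$, i.e.\ $u_\lambda\ge u$ in $\Sigma_\lambda$ for all $\lambda\ll0$.

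\textbf{Step 2 (moving the plane).} Define
\[
\lambda_0:=\sup\{\lambda: u_\mu\ge u \text{ in }\Sigma_\mu\ \text{for all } \mu\le\lambda\}.
\]
Then $\lambda_0<\infty$: otherwise $u$ would be nondecreasing in $x_1$ on all of $\mathbb R^n$, contradicting $u\in L^{p-1}_{sp}$ together with $u\not\equiv0$ (or contradicting the decay condition).

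Suppose $w_{\lambda_0}\not\equiv0$ in $\Sigma_{\lambda_0}$. The strong maximum principle for antisymmetric weak supersolutions then gives $w_{\lambda_0}>0$ in $\Sigma_{\lambda_0}$. This uses continuity of $u$, the fact that $u>0$ (strong minimum principle for $(-\Delta)^s_pu\ge0$), and a nonlocal argument exploiting the positive contribution of the kernel difference $|x-y|^{-n-sp}-|x-y^\lambda|^{-n-sp}$.

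For $\lambda\in(\lambda_0,\lambda_0+\varepsilon)$, the set $\Sigma^-_\lambda$ splits into two parts:
\begin{itemize}
\item a far part in $\{|x|\ge R\}$, handled exactly as in Step 1 via (D), which is uniform in $\lambda$;
\item a near part in $B_R$, which by continuity and compactness lies in a set of arbitrarily small measure near the hyperplane $\{x_1=\lambda_0\}$.
\end{itemize}
On the near part, $c(x)=-\max\{1,q\}u^{q-1}$ is bounded below there. The exception is $q<1$, where $u\ge c>0$ on compacts because $u>0$ is continuous. A localized small-region argument in the spirit of Theorem~\ref{lem:small-region-p-less-2}, adapted to the whole space with weight $|x|^{-sp}u^{p-2}$ bounded above and below on $B_R$, then absorbs this term. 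Hence $\varphi\equiv0$ for these $\lambda$, contradicting the definition of $\lambda_0$. Therefore $u$ is symmetric about $\{x_1=\lambda_0\}$ and nonincreasing in $x_1$ for $x_1>\lambda_0$.

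\textbf{Step 3 (conclusion).} Since the direction $e$ was arbitrary, $u$ is symmetric and monotone with respect to every direction. Hence it is radially symmetric and nonincreasing about the common point $x_0$.

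The main obstacle is Step 2. Both pieces must be absorbed simultaneously within a single energy inequality, with the degenerate weight $u^{p-2}$ (which blows up where $u$ is small, i.e.\ at infinity) and with the cutoff errors. The cutoff tail terms in particular must vanish as $R\to\infty$ using only $u\in L^{p-1}_{sp}$. I would first try to dominate them by $R^{-sp}\|u\|^{p-1}_{L^{p-1}_{sp}}$ times $\|\varphi\|_{L^1}$-type quantities on $\Sigma^-_\lambda$, using (D) to keep $\varphi$ controlled.
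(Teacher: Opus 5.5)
There is a genuine gap: your Step~2 depends on a strong maximum principle for antisymmetric weak supersolutions, and that principle is not available in this setting. You want $w_{\lambda_0}>0$ throughout $\Sigma_{\lambda_0}$ from $w_{\lambda_0}\ge0$, $w_{\lambda_0}\not\equiv0$, so that $\Sigma^-_\lambda\cap B_R$ is trapped in a thin slab and a small-region argument applies. For $1<p\le2$ the linearized coefficient $(|u(x)-u(y)|+|u_\lambda(x)-u_\lambda(y)|)^{p-2}$ is singular. The ``nonlocal argument with the kernel difference'' you allude to is the pointwise one: evaluate both operators at an interior zero of $w_{\lambda_0}$. That needs pointwise regularity such as $C^{1,1}$, which is exactly what the weak framework gives up.

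The paper never needs strict positivity. Continuity alone gives a ball $B_r(x_*)\Subset\Sigma_{\lambda_0}$ with $w_{\lambda_0}\ge2\eta$, hence $\psi\le-\eta$ there for $\lambda$ near $\lambda_0$. Restricting the $J_1$ integrand to $y\in B_r(x_*)$ yields a \emph{linear} coercive term $b_*\int\varphi_\varepsilon$ on the core $\{\lambda-x_1\ge\rho\}\cap B_R$ (Lemma~\ref{whole-lem}). On that core $\psi^+\le\omega_{R_1}(2(\lambda-\lambda_0))$, so Lemma~\ref{lem:power-difference-all-q} bounds the reaction term by $C\,\omega_{R_1}(2(\lambda-\lambda_0))^{\min\{1,q\}}\int\varphi_\varepsilon$, which is absorbed; this needs no positivity of $u$, even for $q<1$. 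The strip $\{\lambda-x_1<\rho\}$ is controlled by the $(\lambda-x_1)^{-sp}$ weight in \eqref{eq:whole-space-K-strip} together with \eqref{eq:whole-space-strip-smallness}. The far region is controlled by (D) via \eqref{41}.

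Two further steps fail as written. First, $\lambda_0<\infty$ does not follow from $u\in L^{p-1}_{sp}$ and $u\not\equiv0$: any bounded function nondecreasing in $x_1$ lies in $L^{p-1}_{sp}$. Condition (D) in direction $e_1$ is no help either, since $\Sigma^-_{\lambda,e_1}=\varnothing$ for every $\lambda$ in that case. The paper needs a two-stage argument.
\begin{itemize}
\item It applies (D) in direction $-e_1$, with planes $\mu_T=-(T+t_1)/2$ reflecting $z+Te_1$ onto $z+t_1e_1$. This forces $u$ to be constant on every line parallel to $e_1$.
\item Since $u$ is nonconstant (positive constants do not solve the equation), it picks $u(a)>u(b)$. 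It then applies (D) in direction $(b-a)/|b-a|$ to the translates $a+Te_1$, $b+Te_1$ to reach a contradiction.
\end{itemize}

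Second, your unshifted test function $(u-u_\lambda)^+\chi_{\Sigma_\lambda}$ need not have bounded support or even be integrable. Condition (D) only gives $\psi^+\to0$ at infinity on $\Sigma^-_\lambda$. So the cutoff errors you propose to bound by $\|\varphi\|_{L^1}$-type quantities may be infinite. The paper avoids this with the shifted truncation $\varphi_\varepsilon=(\psi-\varepsilon)^+\chi_{\Sigma_\lambda}$. Its support $A_{\lambda,\varepsilon}$ is bounded and at positive distance from $T_\lambda$ (Lemma~\ref{lem:whole-space-shifted-admissibility}), so no cutoff is needed. Letting $\varepsilon\downarrow0$ at the end recovers $\Sigma^-_\lambda=\varnothing$.
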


Theorem~\ref{thm:general} covers the subcritical, critical, and supercritical regimes in a unified way. By contrast, previous moving plane results for the fractional $p$-Laplacian were obtained in the pointwise framework and required both higher regularity and global asymptotic control \cite{CL2,WuChen2020,WuYuZhang2021,DaiLiuWang2022}. Our condition \hyperref[eq:RD-prime]{\textup{(D)}} is placed only on the sets where the desired reflection inequality fails; it is satisfied, for instance, whenever \[
|x|^{sp}u(x)^{q-p+1}\to0
\quad\text{as }|x|\to\infty.
\]
In the subcritical range $p-1<q<p_s^*-1$, the continuity assumption in Theorem~\ref{thm:general} is actually redundant: Lemma~\ref{lem:subcritical-regularity} shows that every nonnegative weak solution in $W^{s,p}_{\mathrm{loc}}(\mathbb R^n)\cap L_{sp}^{p-1}(\mathbb R^n)$ admits a locally H\"older continuous representative. Below the Serrin exponent, the Liouville theorems for weak solutions were obtained by Liu \cite{liu2025nontrivial}.

The moving plane method was first introduced by Alexandrov and later used by Serrin \cite{Serrin1971} to prove symmetry in elliptic PDEs. Gidas--Ni--Nirenberg \cite{GNN} then established the classical symmetry result for positive solutions of semilinear elliptic equations. This method became central to the classification theorems of Caffarelli--Gidas--Spruck \cite{CGS} and Chen--Li \cite{CL1} for the Lane--Emden equation.
For the moving plane method applied to local $p$-Laplacian equations, we refer to \cite{Damascelli1998,DamascelliPacella1998,DamascelliPacellaRamaswamy1999,DamascelliSciunzi2004,FarinaMontoroSciunzi2012,DamascelliMerchanMontoroSciunzi2014,Sciunzi2016,OlivaSciunziVaira2020,SerrinZou1999} and the references therein.

The paper is organized as follows. In Section~\ref{pre}, we introduce the weak formulation, justify the antisymmetric truncations as admissible test functions, and derive the basic reflection identities and algebraic estimates. Section~\ref{sec:smal} proves the small region principle in Theorem \ref{lem:small-region-p-less-2}. Section~\ref{sec:bounded-domain-proof} proves Theorem~\ref{Liouville thm}. Section~\ref{sec:whole-space-RD-proof} proves Theorem~\ref{thm:general}. Section~\ref{sec:critical-energy-proof} proves Theorem~\ref{thm:critical}.

\section{Preliminaries}\label{pre}

%The fractional Sobolev space introduced in Section~1 is endowed with the norm
%\[
%\begin{aligned}
%\|u\|_{W^{s,p}(\mathbb{R}^n)}
%&=
%\|u\|_{L^p(\mathbb{R}^n)}
%+ [u]_{W^{s,p}(\mathbb{R}^n)} .
%\end{aligned}
%\]
For every admissible test function \(\varphi\), we define
\[
\begin{aligned}
\left\langle(-\Delta)^s_pu,\varphi\right\rangle
&:=\frac12
\iint_{\mathbb R^n\times\mathbb R^n}
\frac{|u(x)-u(y)|^{p-2}\bigl(u(x)-u(y)\bigr)\bigl(\varphi(x)-\varphi(y)\bigr)}
{|x-y|^{n+sp}}\,dx\,dy\\
&=\frac12\iint_{\mathbb R^n\times\mathbb R^n}
\frac{G\bigl(u(x)-u(y)\bigr)\bigl(\varphi(x)-\varphi(y)\bigr)}
{|x-y|^{n+sp}}\,dx\,dy,
\end{aligned}
\]
where $G(t):=|t|^{p-2}t$. Writing \(p_s^*:=\frac{np}{n-sp}\), a function \(u\in W^{s,p}_0(B_1)\)
%with \(f(u)\in L^{(p_s^*)'}(B_1)\)
is a \textit{weak solution} of \eqref{main eq} if
\[
\langle (-\Delta)^s_p u,\varphi\rangle = \int_{B_1} f(u)\varphi\,dx \qquad \text{for every } \varphi\in W^{s,p}_0(B_1).
\]
A function $0\le u\in W^{s,p}_{\mathrm{loc}}(\mathbb R^n)
\cap L_{sp}^{p-1}(\mathbb R^n)
\cap L^q_{\mathrm{loc}}(\mathbb R^n)$ is a \textit{weak solution} of \eqref{eq:whole-space-lane-emden} if
\begin{equation}\label{whold weak}
   \left\langle(-\Delta)^s_pu,\zeta\right\rangle
 =\int_{\mathbb R^n}u^q\zeta\,dx
 \qquad\text{for every }\zeta\in C_c^\infty(\mathbb R^n).
\end{equation}
By a standard density argument, the weak formulation extends to
compactly supported test functions in $W^{s,p}(\mathbb R^n)$ whenever
the right-hand side is finite.
%By a standard density argument, the definition of weak solutions remains valid for every $\zeta\in W^{s,p}(\mathbb R^n)$ with compact support.
For later use, denote the reflecting hyperplane by
\[
T_{\lambda,e}:=\{x\in\mathbb R^n:x\cdot e=\lambda\}.
\]
When \(e=e_1\), we continue to suppress the directional subscript and write
\[
T_\lambda:=\{x\in\mathbb R^n:x_1=\lambda\}, \qquad \Sigma_\lambda^-:=\{x\in\Sigma_\lambda:u>u_\lambda\}.
\]
For \(-1<\lambda<1\), set
\begin{equation}\label{def:Omega}
 \Omega_\lambda^-:=\{x\in\Omega_\lambda:u>u_\lambda\}.
\end{equation}
Set
\begin{equation}\label{eq def of test func}
    \psi:=u-u_\lambda,
        \qquad
        \varphi:=\psi^+\chi_{\Sigma_\lambda}.
\end{equation}
For \(\varepsilon>0\), define
\begin{equation}\label{eq:whole-space-shifted-truncation}
 \varphi_\varepsilon:=
 (\psi-\varepsilon)^+\chi_{\Sigma_\lambda},
\end{equation}
and
\begin{equation}\label{eq:A}
   A_{\lambda,\varepsilon}
 :=\{x\in\Sigma_\lambda:
 \psi(x)>\varepsilon\}.
\end{equation}

The following two lemmas verify that \(\varphi\) and \(\varphi_\varepsilon\) are admissible test functions for the weak equations of \eqref{main eq} and \eqref{eq:whole-space-lane-emden}, respectively.

\begin{lemma}
\label{lem:bounded-domain-test-admissibility}
Let \(n\ge 2\), \(0<s<1\) and \(1<p\le2\). Let \(u\in W_0^{s,p}(B_1)\), let \(-1<\lambda<1\), and let \(\psi\) and \(\varphi\) be defined by \eqref{eq def of test func}. Then
\[
\psi,\ \psi^+\in W^{s,p}(\mathbb R^n), \qquad \varphi\in W^{s,p}(\R^n).
\]
\end{lemma}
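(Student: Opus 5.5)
The plan is to check the three memberships in turn: first $\psi$, then $\psi^+$ by a contraction argument, and finally $\varphi=\psi^+\chi_{\Sigma_\lambda}$, which is the only delicate step.

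\emph{Step 1: $\psi\in W^{s,p}(\mathbb R^n)$.} The reflection $x\mapsto x^\lambda$ is an affine isometry of $\mathbb R^n$, so $|x^\lambda-y^\lambda|=|x-y|$ and the Jacobian equals $1$. A change of variables therefore gives $\|u_\lambda\|_{L^p}=\|u\|_{L^p}$ and $[u_\lambda]_{W^{s,p}(\mathbb R^n)}=[u]_{W^{s,p}(\mathbb R^n)}$. Since $W^{s,p}(\mathbb R^n)$ is a vector space, $\psi=u-u_\lambda$ belongs to it.

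\emph{Step 2: $\psi^+\in W^{s,p}(\mathbb R^n)$.} The map $t\mapsto t^+$ is $1$-Lipschitz, so $|\psi^+(x)-\psi^+(y)|\le|\psi(x)-\psi(y)|$ and $|\psi^+|\le|\psi|$ pointwise. This immediately gives $[\psi^+]_{W^{s,p}}\le[\psi]_{W^{s,p}}$ and $\|\psi^+\|_{L^p}\le\|\psi\|_{L^p}$.

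\emph{Step 3: $\varphi\in W^{s,p}(\mathbb R^n)$.} Multiplying by a characteristic function does not in general preserve $W^{s,p}$ when $sp\ge1$, so some structure must be used. The key observation is antisymmetry. Since $\psi(x^\lambda)=-\psi(x)$, we have $\psi=0$ on $T_\lambda$ in a trace sense, and $\psi^+(x^\lambda)=\psi^-(x)$. On $\Sigma_\lambda$ we have $\varphi=\psi^+$, and on $\Sigma_\lambda^c$ we have $\varphi=0$. The $L^p$ bound is trivial, so the work is in splitting the Gagliardo double integral:
\[
[\varphi]^p_{W^{s,p}}=[\psi^+]^p_{W^{s,p}(\Sigma_\lambda)}+2\int_{\Sigma_\lambda}\int_{\Sigma_\lambda^c}\frac{|\psi^+(x)|^p}{|x-y|^{n+sp}}\,dy\,dx .
\]
The first term is bounded by $[\psi]^p_{W^{s,p}}$. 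For the cross term, take $x\in\Sigma_\lambda$ with $\psi(x)>0$ and $y\in\Sigma_\lambda^c$. Write $y=z^\lambda$ with $z\in\Sigma_\lambda$. Then $\psi(y)=-\psi(z)$, so one of two cases holds.
\begin{itemize}
\item If $\psi(z)\ge0$, then $\psi(y)\le0$, hence $\psi^+(x)\le\psi(x)-\psi(y)$, and the integrand is dominated by that of $[\psi]_{W^{s,p}}$ on $\Sigma_\lambda\times\Sigma_\lambda^c$.
\item If $\psi(z)<0$, use $\psi^+(x)\le\psi(x)-\psi(z)$ together with $|x-z|\le|x-y|$, which holds because $x$ and $z$ lie on the same side of $T_\lambda$. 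The integrand is then dominated by that of $[\psi]_{W^{s,p}(\Sigma_\lambda\times\Sigma_\lambda)}$ after the change of variables $y=z^\lambda$.
\end{itemize}
In both cases the cross term is bounded by $C[\psi]^p_{W^{s,p}(\mathbb R^n)}$, which gives $\varphi\in W^{s,p}(\mathbb R^n)$ with $[\varphi]_{W^{s,p}}\le C[\psi]_{W^{s,p}}$.

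Step 3 is the main obstacle, because the cutoff $\chi_{\Sigma_\lambda}$ creates an interface term. The pointwise case split above, based on antisymmetry and the inequality $|x-z|\le|x-y|$ for $z=y^\lambda$, is what removes the need for any Hardy-type inequality. If that split had a gap, the fallback would be to write $\varphi=\tfrac12(\psi^+-\psi^+\circ R)+\tfrac12(\psi^+ + \psi^+\circ R)\,\mathrm{sgn}$-type decompositions, where $R$ is the reflection. The pointwise comparison is the route to pursue first.
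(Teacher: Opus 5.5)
Your argument is correct and is essentially the paper's proof. Both proofs reduce the interface term to integrals over $\Sigma_\lambda\times\Sigma_\lambda$ using the reflection $y\mapsto y^\lambda$, the antisymmetry $\psi(y^\lambda)=-\psi(y)$, and the kernel comparison $|x-y|\le|x-y^\lambda|$ for $x,y\in\Sigma_\lambda$. The paper folds everything into one integral over $\Sigma_\lambda\times\Sigma_\lambda$ and applies the pointwise inequality $A|a^+-b^+|^p+B\bigl((a^+)^p+(b^+)^p\bigr)\le A|a-b|^p+B|a+b|^p$ (for $A\ge B>0$), which gives the sharp bound $[\varphi]^p_{W^{s,p}(\mathbb R^n)}\le\frac12[\psi]^p_{W^{s,p}(\mathbb R^n)}$; your case split on the sign of $\psi(y^\lambda)$ gives a cruder but equally sufficient constant, and your remark about a trace on $T_\lambda$ is unnecessary since it is never used.
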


\begin{proof}
One can see that \(u_\lambda\in W^{s,p}(\mathbb R^n)\), and hence \(\psi=u-u_\lambda\in W^{s,p}(\mathbb R^n)\). Since
\[
|\psi^+(x)-\psi^+(y)| \le |\psi(x)-\psi(y)|,
\]
we also have \(\psi^+\in W^{s,p}(\mathbb R^n)\). For \(x,y\in\Sigma_\lambda\), we have
\[
\frac{1}{|x-y|^{n+sp}} \ge \frac{1}{|x-y^\lambda|^{n+sp}}>0.
\]
For every \(a,b\in\mathbb R\) and \(A\ge B>0\),
\[
A|a^+-b^+|^p+B\bigl((a^+)^p+(b^+)^p\bigr) \le A|a-b|^p+B|a+b|^p.
\]
Using this inequality and the antisymmetry \(\psi(y^\lambda)=-\psi(y)\), we obtain
\[
\begin{aligned}
 \bigl[\varphi\bigr]_{W^{s,p}(\mathbb R^n)}^p
 &=\iint_{\Sigma_\lambda\times\Sigma_\lambda}
 \left\{
 \frac{|\psi^+(x)-\psi^+(y)|^p}{|x-y|^{n+sp}}
 +\frac{(\psi^+(x))^p+(\psi^+(y))^p}
 {|x-y^\lambda|^{n+sp}}
 \right\}\,dx\,dy\\
 &\le\iint_{\Sigma_\lambda\times\Sigma_\lambda}
 \left\{
 \frac{|\psi(x)-\psi(y)|^p}{|x-y|^{n+sp}}
 +\frac{|\psi(x)+\psi(y)|^p}{|x-y^\lambda|^{n+sp}}
 \right\}\,dx\,dy\\
 &=\frac12\bigl[\psi\bigr]_{W^{s,p}(\mathbb R^n)}^p.
\end{aligned}
\]
Since \(|\varphi|\le|\psi|\), we also have \(\varphi\in L^p(\mathbb R^n)\). Thus \(\varphi\in W^{s,p}(\mathbb R^n)\).
%Since \(\varphi=0\) a.e. in
%\(\Omega_\lambda^c\), we have \(\varphi\in W_0^{s,p}(\Omega_\lambda)\).
\end{proof}

Next we verify that \(\varphi_\varepsilon\) is an admissible test function for the weak equations of \eqref{eq:whole-space-lane-emden}.
\begin{lemma}
\label{lem:whole-space-shifted-admissibility}
Let \(n\ge 2\), \(0<s<1\), \(1<p\le2\) and $q>p-1$.  Let $0\le u\in W^{s,p}_{\mathrm{loc}}(\mathbb R^n) \cap L_{sp}^{p-1}(\mathbb R^n) \cap C(\mathbb R^n)$ satisfy the decay condition \hyperref[eq:RD-prime]{\textup{(D)}}.
%For every
%\(\lambda\in\mathbb R\) and \(\varepsilon>0\), the set
%\(A_{\lambda,\varepsilon}\) defined as \eqref{eq:A} is bounded and has positive distance
%from \(T_\lambda\). Moreover,
Let $\varphi_\varepsilon$ be defined by \eqref{eq:whole-space-shifted-truncation}. Then $\varphi_\varepsilon \in W^{s,p}(\mathbb R^n)$ has compact support and is a valid test function in \eqref{whold weak} for both \(u\) and \(u_\lambda\).
\end{lemma}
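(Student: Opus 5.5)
The plan is to show three things in order: $A_{\lambda,\varepsilon}$ is bounded and stays a positive distance from $T_\lambda$; $\varphi_\varepsilon$ lies in $W^{s,p}(\mathbb R^n)$ with compact support; and the weak formulation \eqref{whold weak} for $u$ and for $u_\lambda$ extends to $\varphi_\varepsilon$.

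\emph{Support of $\varphi_\varepsilon$.} First, $\varphi_\varepsilon$ vanishes outside $A_{\lambda,\varepsilon}$. By continuity of $u$, the function $\psi=u-u_\lambda$ is continuous and vanishes on $T_\lambda$. Hence $\{\psi>\varepsilon\}\cap\Sigma_\lambda$ is an open set whose closure avoids $T_\lambda$ locally.

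For boundedness I use the decay condition (D) with $e=e_1$. On $A_{\lambda,\varepsilon}\subset\Sigma^-_{\lambda}$ we have $u(x)>\varepsilon$. Therefore, for $|x|\ge R>R_0$,
\[
|x|^{sp}\varepsilon^{q-p+1}\le |x|^{sp}u(x)^{q-p+1}\le c_*,
\]
using $q>p-1$. This forces $|x|\le (c_*\varepsilon^{-(q-p+1)})^{1/(sp)}$. So $A_{\lambda,\varepsilon}$ is bounded, say contained in $B_{R_1}$.

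Since $\overline{A_{\lambda,\varepsilon}}$ is then compact and $\psi\ge\varepsilon$ on it while $\psi=0$ on $T_\lambda$, we get $\dist(A_{\lambda,\varepsilon},T_\lambda)>0$. Thus $\varphi_\varepsilon$ is supported in a compact set $K\subset\Sigma_\lambda$.

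\emph{Regularity of $\varphi_\varepsilon$.} The function $\varphi_\varepsilon$ is bounded, since $u$ is continuous on compacts. I choose an open set $U\Subset\mathbb R^n$ containing $K\cup K^\lambda$ with a margin $d>0$. Then $\psi\in W^{s,p}(U)$ because $u,u_\lambda\in W^{s,p}_{\mathrm{loc}}$.

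Next I split $[\varphi_\varepsilon]^p_{W^{s,p}(\mathbb R^n)}$ into $U\times U$ and the remainder:
\begin{itemize}
\item On $U\times U$, I use $|(\psi-\varepsilon)^+\chi_{\Sigma_\lambda}(x)-(\psi-\varepsilon)^+\chi_{\Sigma_\lambda}(y)|\le|\psi(x)-\psi(y)|$ when both points lie on the same side of $T_\lambda$. When $x$ and $y$ lie on opposite sides, only one term is nonzero and it lives on $K$, whose distance to $T_\lambda$ is positive, so $|x-y|\ge\dist(K,T_\lambda)$.
\item In the remainder, $x\in K$ and $y\notin U$, so $|x-y|\ge d$ and the contribution is at most $C\|\varphi_\varepsilon\|_\infty^p|K|d^{-sp}$.
\end{itemize}
Hence $\varphi_\varepsilon\in W^{s,p}(\mathbb R^n)$ with compact support.

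\emph{Admissibility in the weak formulation.} I mollify: $\varphi_\varepsilon*\rho_\eta\to\varphi_\varepsilon$ in $W^{s,p}(\mathbb R^n)$, with supports in a fixed compact set. The local part of the pairing $\langle(-\Delta)^s_pu,\cdot\rangle$ converges by H\"older, since $G(u(x)-u(y))|x-y|^{-(n+sp)/p'}\in L^{p'}(U\times U)$. The tail part is controlled by
\[
\int_K|\zeta(x)|\int_{U^c}\frac{|u(x)|^{p-1}+|u(y)|^{p-1}}{|x-y|^{n+sp}}\,dy\,dx,
\]
which is finite because $u\in L^{p-1}_{sp}$ and $u$ is bounded on $K$; it is also linear in $\zeta$, so it converges. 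The right-hand side converges since $u^q$ is bounded on the compact support.

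For $u_\lambda$ I reflect: $u_\lambda$ solves the same equation weakly, as seen by changing variables in \eqref{whold weak}. It inherits $W^{s,p}_{\mathrm{loc}}$ and the tail condition, because $|x^\lambda|\le|x|+2|\lambda|$ and so $(1+|x^\lambda|)\sim(1+|x|)$. The same argument then applies.

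The main obstacle is the boundedness of $A_{\lambda,\varepsilon}$, which genuinely needs (D). Everything after it is routine localization and tail estimates.
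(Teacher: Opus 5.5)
Your proof is correct and follows essentially the same route as the paper. Boundedness of \(A_{\lambda,\varepsilon}\) comes from (D) together with \(u\ge\psi>\varepsilon\) there, continuity gives positive distance from \(T_\lambda\), a localization-plus-tail estimate gives \(\varphi_\varepsilon\in W^{s,p}(\mathbb R^n)\), and density gives admissibility. The only differences are cosmetic: the paper writes \(\varphi_\varepsilon=\chi(\psi-\varepsilon)^+\) with a smooth cutoff \(\chi\in C_c^\infty(\Sigma_\lambda)\) equal to \(1\) near \(\overline{A_{\lambda,\varepsilon}}\), whereas you keep \(\chi_{\Sigma_\lambda}\) and control the cross pairs directly via \(|x-y|\ge\dist(K,T_\lambda)\); you also spell out the density argument that the paper only calls standard.
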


\begin{proof}
Since $0<\psi\le u$ on \(\Sigma_\lambda^-\), the decay condition \hyperref[eq:RD-prime]{\textup{(D)}} implies that \(\psi(x)\to0\) uniformly on \(\Sigma_\lambda^-\) as \(|x|\to\infty\). Hence for every \(\lambda\in\mathbb R\) and \(\varepsilon>0\), \(A_{\lambda,\varepsilon}\) defined by \eqref{eq:A} is bounded. Since \(\psi=0\) on \(T_\lambda\) and \(\psi\) is continuous, \(A_{\lambda,\varepsilon}\) has positive distance from \(T_\lambda\).
%\[
%        \overline{A_{\lambda,\varepsilon}}
%        \Subset\Sigma_\lambda.
%\]
Choose an open set \(U\) and \(\chi\in C_c^\infty(\Sigma_\lambda)\) such that
\[
\overline{A_{\lambda,\varepsilon}} \subset U\Subset\Sigma_\lambda, \qquad 0\le\chi\le1, \qquad \chi\equiv1\quad\text{in }U.
\]
Then $\varphi_\varepsilon=\chi(\psi-\varepsilon)^+$.
%The local \(W^{s,p}\)-regularity of \(u\), invariance under reflection, and the Lipschitz continuity of the positive-part map show that this function belongs to \(W^{s,p}(\mathbb R^n)\) and has compact support. The final assertion follows by approximation with functions in
%\(C_c^\infty(\mathbb R^n)\); the tail assumption controls the part of
%the bilinear form in which one integration variable leaves a fixed ball.
Since \(u\in W_{\mathrm{loc}}^{s,p}(\mathbb R^n)\), one has $u_\lambda,\psi \in W_{\mathrm{loc}}^{s,p}(\mathbb R^n)$. Since
\[
\left|(\psi(x)-\varepsilon)^+ -(\psi(y)-\varepsilon)^+\right| \le |\psi(x)-\psi(y)|,
\]
we have \((\psi-\varepsilon)^+\in W_{\mathrm{loc}}^{s,p}(\mathbb R^n)\). Choose \(R\ge1\) such that \(\operatorname{supp}\chi\subset B_R\). For \(x,y\in B_{2R}\),
\[
\begin{aligned}
|\varphi_\varepsilon(x)-\varphi_\varepsilon(y)|
&\le
\|\chi\|_\infty
\left|(\psi(x)-\varepsilon)^+
-(\psi(y)-\varepsilon)^+\right|+
\|\nabla\chi\|_\infty
(\psi(y)-\varepsilon)^+|x-y|.
\end{aligned}
\]
Consequently,
\[
\begin{aligned}
&\iint_{B_{2R}\times B_{2R}}
\frac{|\varphi_\varepsilon(x)-\varphi_\varepsilon(y)|^p}
{|x-y|^{n+sp}}\,dx\,dy\\
&\quad\le
C[(\psi-\varepsilon)^+]_{W^{s,p}(B_{2R})}^p
+C\int_{B_{2R}}|(\psi(y)-\varepsilon)^+|^p
\int_{B_{2R}}|x-y|^{-n+p-sp}\,dx\,dy\\
&\quad\le
C[(\psi-\varepsilon)^+]_{W^{s,p}(B_{2R})}^p
+CR^{p-sp}\|(\psi-\varepsilon)^+\|_{L^p(B_{2R})}^p
<\infty.
\end{aligned}
\]
Since \(\varphi_\varepsilon=\chi(\psi-\varepsilon)^+\) is supported in \(B_R\),
\[
\begin{aligned}
&2\int_{B_R}\int_{B_{2R}^c}
\frac{|\varphi_\varepsilon(x)|^p}
{|x-y|^{n+sp}}\,dy\,dx  \le
CR^{-sp}\|\varphi_\varepsilon\|_{L^p(B_R)}^p<\infty.
\end{aligned}
\]
Therefore $\varphi_\varepsilon \in W^{s,p}(\mathbb R^n)$ and it has compact support. By a standard density argument, $\varphi_\varepsilon$ is a valid test function for \eqref{whold weak} satisfied by both \(u\) and \(u_\lambda\).

\end{proof}

In the subcritical Lane--Emden case, the continuity assumption in Theorem~\ref{thm:general} is not necessary. Indeed, using the estimates in \cite{Cozzi2017}, one can verify that the solution admits a locally H\"older continuous representative.
\begin{lemma}
\label{lem:subcritical-regularity}
Let \(1<p\le 2\) and $0<q<\frac{np}{n-sp}-1$. Suppose that $0\le u\in W_{\mathrm{loc}}^{s,p}(\mathbb R^n) \cap L_{sp}^{p-1}(\mathbb R^n)$ is a weak solution of \eqref{eq:whole-space-lane-emden}. Then there exists \(\alpha\in(0,1)\), depending only on \(n,s,p\), such that \(u\) admits a representative satisfying $u\in C^\alpha_{\mathrm{loc}}(\mathbb R^n)$.
\end{lemma}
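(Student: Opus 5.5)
Since $u\ge0$ is a weak solution of $(-\Delta)^s_pu=u^q$ in $\mathbb R^n$, I will treat the right-hand side as a given datum $V:=u^q$. The plan is to invoke the local boundedness and Hölder estimates of Cozzi~\cite{Cozzi2017} (or Di Castro--Kuusi--Palatucci) for weak solutions of $(-\Delta)^s_pu=V$ with a tail term. These estimates require $V\in L^{r}_{\mathrm{loc}}$ with $r>n/(sp)$. The first step is therefore to upgrade the integrability of $u$. By the fractional Sobolev embedding on balls, $u\in W^{s,p}_{\mathrm{loc}}$ gives $u\in L^{p_s^*}_{\mathrm{loc}}$ (when $sp<n$; if $sp\ge n$ every $L^r_{\mathrm{loc}}$ is available and we skip directly to the last step). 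Hence $V=u^q\in L^{p_s^*/q}_{\mathrm{loc}}$, and subcriticality $q<p_s^*-1$ gives $p_s^*/q>p_s^*/(p_s^*-1)=(p_s^*)'$. This is strictly better than the dual exponent, and that margin is what a bootstrap needs.

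The second step is a Moser/De Giorgi-type iteration to prove $u\in L^\infty_{\mathrm{loc}}$. I would write the equation as $(-\Delta)^s_pu=a(x)u^{p-1}$ with $a:=u^{q-p+1}$, when $q\ge p-1$. Then $a\in L^{p_s^*/(q-p+1)}_{\mathrm{loc}}$, and $q<p_s^*-1$ means exactly $p_s^*/(q-p+1)>p_s^*/(p_s^*-p)=n/(sp)$. So the potential is in a supercritical Lebesgue class. The standard nonlocal Moser iteration then applies: test with $\eta^p(u_k^{\beta})$, truncated powers times cutoffs, handle the tail term via $u\in L^{p-1}_{sp}$, and absorb the potential using Hölder and Sobolev with the gain $r>n/(sp)$. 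This yields $\sup_{B_{R/2}}u\le C(\ldots)$, with the constant depending on local norms and the tail. If $q<p-1$, the bound $u^q\le 1+u^{p-1}$ reduces to the same situation with $a\in L^\infty$ plus a bounded forcing term.

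The third step is routine. With $u\in L^\infty_{\mathrm{loc}}$, $V=u^q\in L^\infty_{\mathrm{loc}}$, and Cozzi's Hölder estimate for $(-\Delta)^s_pu=V$ with bounded $V$ and finite tail gives $u\in C^\alpha_{\mathrm{loc}}$ for some $\alpha=\alpha(n,s,p)\in(0,1)$. Strictly, $\alpha$ may depend on the data as well, but the De Giorgi class exponent depends only on $n,s,p$.

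I expect the main obstacle to be the second step, the iteration carried out in the singular range $1<p\le2$ with the tail term. The key technical points are choosing admissible test functions (truncations $\min\{u,k\}$ keep everything in $W^{s,p}$ with compact support via the cutoff) and verifying the algebraic inequality $G(a-b)(a_k^\beta-b_k^\beta)\gtrsim|a_k^{(\beta+p-1)/p}-b_k^{(\beta+p-1)/p}|^p$, which holds for all $p>1$. Alternatively, I could cite an existing boundedness result for equations with a potential in $L^{r}$, $r>n/(sp)$, e.g. Brasco--Parini type or \cite{MR4985486}, localized.
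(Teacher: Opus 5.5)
Your proof is correct, but it takes a different route from the paper's.

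\textbf{How the paper argues.} The paper never runs an iteration of its own. It localizes, setting $v=\eta u\in W^{s,p}(\mathbb R^n)$, and shows that $v$ solves $(-\Delta)^s_pv=v^q+h_R$ weakly in $B_{2R}(x_0)$ with $h_R\in L^\infty$. The bound on $h_R$ uses the global estimate $|G(a)-G(b)|\le C_p|a-b|^{p-1}$, which is exactly where $1<p\le2$ enters, together with $u\in L^{p-1}_{sp}(\mathbb R^n)$. It then quotes Cozzi's Theorem~8.1 for local boundedness under the subcritical growth $|f_R(x,t)|\le\|h_R\|_\infty+|t|^q$ with $q<p_s^*-1$. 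Finally it quotes Cozzi's Theorem~8.2 for H\"older continuity.

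\textbf{How your route differs.} You prove local boundedness yourself. You linearize the nonlinearity as a potential $a=u^{q-p+1}\in L^r_{\mathrm{loc}}$ with $r>n/(sp)$, and your check that this is equivalent to $q<p_s^*-1$ is right. You then run a nonlocal Moser iteration with the tail handled inside the Caccioppoli estimate. The tail is controlled by the elementary bound $G(a-b)\ge-b^{p-1}$ for $a,b\ge0$, plus a shift $u+d$ with $d$ comparable to $\operatorname{Tail}^{1/(p-1)}$. Your Step~1 embedding $u\in L^{p_s^*}_{\mathrm{loc}}$ also makes the truncated test functions $\eta^p u_k^\beta$ admissible, since then $u^q\in L^1_{\mathrm{loc}}$.

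\textbf{What each approach buys.} Your version makes the role of subcriticality transparent and never uses $p\le2$, so it extends to $p>2$. There the paper's bound on $h_R$ would require knowing in advance that $u$ is locally bounded. The price is that the iteration must actually be carried out, with cutoffs, truncated powers, the tail and an $L^r$ potential, or else cited in a localized form. The paper's argument, by contrast, is a few lines of reduction to existing theorems.

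\textbf{One point to watch.} The paper applies Cozzi's theorems to the globally defined $v=\eta u\in W^{s,p}(\mathbb R^n)$, not to $u$ itself. So if you cite Cozzi in your third step, you need the same localization, which is harmless once $u$ is locally bounded. Alternatively, use a H\"older estimate stated directly for $W^{s,p}_{\mathrm{loc}}\cap L^{p-1}_{sp}$ weak solutions with bounded right-hand side.
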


\begin{proof}
Fix \(x_0\in\mathbb R^n\) and \(R>0\). Choose \(\eta\in C_c^\infty(B_{4R}(x_0))\) such that
\[
0\le\eta\le1, \qquad \eta\equiv1 \quad\text{in }B_{3R}(x_0),
\]
and set $v:=\eta u$. We first observe that $v\in W^{s,p}(\mathbb R^n)$. Indeed, \(v\in L^p(\mathbb R^n)\) because it has compact support and \(u\in L^p_{\mathrm{loc}}(\mathbb R^n)\). For \(x,y\in B_{5R}(x_0)\), we have
\[
\begin{aligned}
|v(x)-v(y)|
&\le
|\eta(x)|\,|u(x)-u(y)|
+
|u(y)|\,|\eta(x)-\eta(y)| \\
&\le |\eta(x)|\,|u(x)-u(y)|
+
C_R|u(y)|\,|x-y|.
\end{aligned}
\]
It follows that
\begin{align*}
&\int_{B_{5R}(x_0)}
 \int_{B_{5R}(x_0)}
 \frac{|v(x)-v(y)|^p}{|x-y|^{n+sp}}\,dx\,dy
\\
&\qquad\le
C [u]_{W^{s,p}(B_{5R}(x_0))}^p
+
C_R
\int_{B_{5R}(x_0)}
|u(y)|^p
\int_{B_{5R}(x_0)}
\frac{dx}{|x-y|^{n+sp-p}}\,dy
<\infty.
\end{align*}
Moreover, since \(\operatorname{supp}v\subset B_{4R}(x_0)\),
\[
\int_{B_{5R}(x_0)} \int_{\mathbb R^n\setminus B_{5R}(x_0)} \frac{|v(x)|^p}{|x-y|^{n+sp}}\,dy\,dx \le C R^{-sp}\|v\|_{L^p(B_{4R}(x_0))}^p<\infty.
\]
Thus \(v\in W^{s,p}(\mathbb R^n)\).

For \(x\in B_{2R}(x_0)\), define
\[
h_R(x) := \int_{\mathbb R^n\setminus B_{3R}(x_0)} \frac{ G\bigl(u(x)-v(y)\bigr) - G\bigl(u(x)-u(y)\bigr) }{ |x-y|^{n+sp} }\,dy.
\]
Since \(v=u\) in \(B_{3R}(x_0)\), for every \(\zeta\in C_c^\infty(B_{2R}(x_0))\), a direct decomposition of the double integrals gives
\[
\left\langle (-\Delta)^s_pv-(-\Delta)^s_pu,\zeta \right\rangle = \int_{B_{2R}(x_0)}h_R(x)\zeta(x)\,dx.
\]
Consequently,
\[
(-\Delta)^s_pv=v^q+h_R(x) \qquad\text{weakly in }B_{2R}(x_0).
\]
We claim that $h_R\in L^\infty(B_{2R}(x_0))$. Since \(1<p\le2\), the function \(G\) satisfies
\[
|G(a)-G(b)| \le C_p|a-b|^{p-1} \qquad \text{for all }a,b\in\mathbb R.
\]
Therefore, for \(x\in B_{2R}(x_0)\),
\begin{align*}
|h_R(x)|
&\le
C_p
\int_{\mathbb R^n\setminus B_{3R}(x_0)}
\frac{|u(y)-v(y)|^{p-1}}{|x-y|^{n+sp}}\,dy
\le
C_p
\int_{\mathbb R^n\setminus B_{3R}(x_0)}
\frac{|u(y)|^{p-1}}{|x-y|^{n+sp}}\,dy \\
& \le
C
\int_{\mathbb R^n\setminus B_{3R}(x_0)}
\frac{|u(y)|^{p-1}}{|y-x_0|^{n+sp}}\,dy
<\infty,
\end{align*}
where the last inequality follows from \(u\in L_{sp}^{p-1}(\mathbb R^n)\). Hence $\|h_R\|_{L^\infty(B_{2R}(x_0))}<\infty$.

Set $f_R(x,t):=\bigl((t^+)^q+h_R(x)\bigr)$, which satisfies
\[
|f_R(x,t)| \le \|h_R\|_{L^\infty(B_{2R}(x_0))} + |t|^{q}.
\]
Up to a positive constant, $v$ satisfies
\[
(-\Delta)^s_pv=f_R(x,v) \qquad\text{weakly in }B_{2R}(x_0).
\]
Thus the assumptions of \cite[Theorem~8.1]{Cozzi2017} are satisfied. It follows that $v\in L^\infty_{\mathrm{loc}}(B_{2R}(x_0))$. Moreover, \(f_R(x,t)\) is locally bounded in \(t\), uniformly with respect to \(x\in B_{2R}(x_0)\). Hence \cite[Theorem~8.2]{Cozzi2017} yields $v\in C^\alpha_{\mathrm{loc}}(B_{2R}(x_0))$ for some \(\alpha\in(0,1)\) depending only on \(n,s,p\). We conclude that $u\in C^\alpha(B_R(x_0))$.
%检查holder条件，是不是q可以放宽
\end{proof}

The following lemma gives a decomposition of the weak form of $(-\Delta)^s_p u-(-\Delta)^s_p u_\lambda$ and establishes the corresponding sign estimates.
\begin{lemma}\label{lem:reflection-inequality}
Consider $1<p<\infty$ and $-1<\lambda<1$. Let $0\le u\in W_0^{s,p}(B_1)$. Assume that $\varphi$ is defined by \eqref{eq def of test func}. Then
\begin{equation}\label{eq:phi>0}
     \left\langle
        (-\Delta)^s_p u-(-\Delta)^s_p u_\lambda,\varphi
        \right\rangle \ge0.
\end{equation}
Moreover, let $\lambda\in\mathbb{R}$ and  $0\le u\in W^{s,p}_{\mathrm{loc}}(\mathbb R^n) \cap L_{sp}^{p-1}(\mathbb R^n) \cap C(\mathbb R^n)$ satisfy the decay condition \hyperref[eq:RD-prime]{\textup{(D)}}, and suppose that $\varphi_\varepsilon$ is defined by \eqref{eq:whole-space-shifted-truncation}. Then
\begin{equation}\label{eq:phi-epsilon-positive}
\left\langle
(-\Delta)_{p}^{s}u-(-\Delta)_{p}^{s}u_{\lambda},\varphi_\varepsilon
\right\rangle\geq0,
\end{equation}
and
\begin{equation}\label{eq:whole-space-reflected-energy}
  \begin{aligned}
 &\quad \left\langle
 (-\Delta)^s_pu-(-\Delta)^s_pu_\lambda,\varphi_\varepsilon
 \right\rangle\\
  &\ge c_p\int_{A_{\lambda,\varepsilon}}
 \psi(x)\varphi_\varepsilon(x)
 \int_{\Sigma_\lambda}
 \frac{
 \bigl(
 |u(x)-u(y)|+|u_\lambda(x)-u(y)|
 \bigr)^{p-2}}
 {|x-y^\lambda|^{n+sp}}\,dy\,dx.
\end{aligned}
\end{equation}
\end{lemma}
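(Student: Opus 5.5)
We split the double integral defining the pairing over the four quarters $\Sigma_\lambda\times\Sigma_\lambda$, $\Sigma_\lambda\times\Sigma_\lambda^c$, $\Sigma_\lambda^c\times\Sigma_\lambda$, $\Sigma_\lambda^c\times\Sigma_\lambda^c$, and fold everything back to $\Sigma_\lambda\times\Sigma_\lambda$ with the change of variables $y\mapsto y^\lambda$. The test function vanishes outside $\Sigma_\lambda$, and $|x^\lambda-y^\lambda|=|x-y|$ and $|x-y^\lambda|=|x^\lambda-y|$. Write $w:=u_\lambda$ and $G(t)=|t|^{p-2}t$. For $\phi=\varphi$ or $\phi=\varphi_\varepsilon$, a direct computation gives the standard reflected form
\[
\langle(-\Delta)^s_pu-(-\Delta)^s_pw,\phi\rangle
=\iint_{\Sigma_\lambda\times\Sigma_\lambda}\Bigl\{\frac{[G(u(x)-u(y))-G(w(x)-w(y))](\phi(x)-\phi(y))}{|x-y|^{n+sp}}
+\frac{[G(u(x)-w(y))-G(w(x)-u(y))](\phi(x)+\phi(y))}{|x-y^\lambda|^{n+sp}}\Bigr\}dx\,dy .
\]
Admissibility of $\varphi,\varphi_\varepsilon$ and finiteness of each piece follow from Lemmas \ref{lem:bounded-domain-test-admissibility} and \ref{lem:whole-space-shifted-admissibility}. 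In the whole-space case, integrability at infinity comes from the tail condition $u\in L^{p-1}_{sp}$, because $\phi$ has compact support in $\Sigma_\lambda$.

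\emph{Positivity of the second term.} The integrand is nonnegative. Indeed, $\phi\ge0$, and $u(x)-w(y)\ge w(x)-u(y)$ exactly when $\psi(x)+\psi(y)\ge0$. Wherever $\phi(x)+\phi(y)>0$, at least one of $\psi(x),\psi(y)$ is positive, so the sign is not immediately clear. We therefore split it as
\[
[G(u(x)-w(y))-G(w(x)-u(y))]\phi(x)+[G(u(x)-w(y))-G(w(x)-u(y))]\phi(y).
\]
The second piece equals $[G(u(y)-w(x))-G(w(y)-u(x))]\phi(y)$ after swapping the roles via oddness of $G$, and it is symmetric in $(x,y)$ because the kernel $|x-y^\lambda|$ is symmetric. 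So the term becomes $2\iint[G(u(x)-w(y))-G(w(x)-u(y))]\phi(x)/|x-y^\lambda|^{n+sp}$. On $\{\phi(x)>0\}$ we have $u(x)>w(x)$, hence $u(x)-w(y)>w(x)-w(y)$ and $w(x)-u(y)<u(x)-u(y)$. This is not directly comparable, so we use the symmetric version instead: monotonicity of $G$ gives
\[
G(u(x)-w(y))-G(w(x)-u(y))\ge c_p(|u(x)-w(y)|+|w(x)-u(y)|)^{p-2}(\psi(x)+\psi(y)).
\]
On the set where $\psi(y)<0$ this may have either sign. Here I would pair $y$ with $y$'s contribution through the first term, or more simply use $\psi(x)+\psi(y)$ together with $\phi(x)+\phi(y)$: the product $(\psi(x)+\psi(y))(\phi(x)+\phi(y))\ge0$ fails only when $\psi(x)>0>\psi(y)$ with $|\psi(y)|>\psi(x)$. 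I expect this to be the genuine obstacle. The resolution I would try is to use the local Lipschitz/Hölder continuity of $G$ for $p\le2$ to compare with the first term, or to note that $u\ge0$ and $u=0$ outside $B_1$ make the term $u(x)-w(y)$ vs $w(x)-u(y)$ comparable through $u(y)\ge0$.

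\emph{Positivity of the first term.} The first term is the monotone pairing for the difference $G(a)-G(b)$ with $a-b=\psi(x)-\psi(y)$ against $\phi(x)-\phi(y)$, where $\phi$ is a nondecreasing truncation of $\psi$. By monotonicity of $G$, $(G(a)-G(b))$ has the sign of $a-b$, and $(a-b)(\phi(x)-\phi(y))\ge0$ because $t\mapsto(t-\varepsilon)^+$ is nondecreasing. Hence the first term is $\ge0$, which proves \eqref{eq:phi>0} and \eqref{eq:phi-epsilon-positive} once the second term is handled.

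\emph{The quantitative bound \eqref{eq:whole-space-reflected-energy}.} We drop the first term, keep the second restricted to $x\in A_{\lambda,\varepsilon}$, and use the elementary inequality $G(a)-G(b)\ge c_p(|a|+|b|)^{p-2}(a-b)$ for $a>b$, $p\le2$. For $x\in A_{\lambda,\varepsilon}$ and $y$ with $\psi(y)\le\psi(x)$, this bounds the integrand from below by $c_p\psi(x)\varphi_\varepsilon(x)(|u(x)-u(y)|+|w(x)-u(y)|)^{p-2}|x-y^\lambda|^{-n-sp}$, after relating $|u(x)-w(y)|$ to $|u(x)-u(y)|$ via the reflection symmetry of the $y$-integral. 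The main expected difficulty is again the region where $\psi(y)$ is large, and I would handle it by symmetrizing in $(x,y)$ before applying the inequality.
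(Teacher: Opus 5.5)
There is a genuine gap, and you located it yourself. In your reflected form, the cross term $[G(u(x)-u_\lambda(y))-G(u_\lambda(x)-u(y))](\phi(x)+\phi(y))\,|x-y^\lambda|^{-n-sp}$ has the sign of $(\psi(x)+\psi(y))(\phi(x)+\phi(y))$. This is negative wherever $\phi(x)>0$ and $\psi(y)<-\psi(x)$, and nothing in your proposal compensates for it.

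The remedies you suggest do not work. Lipschitz or H\"older continuity of $G$ for $p\le2$ gives no sign information, and \eqref{eq:phi>0} is claimed for all $1<p<\infty$. The facts $u\ge0$ and $u=0$ off $B_1$ play no role here. Consequently none of \eqref{eq:phi>0}, \eqref{eq:phi-epsilon-positive}, \eqref{eq:whole-space-reflected-energy} is established. In particular, restricting to $y$ with $\psi(y)\le\psi(x)$ and ``relating $|u(x)-u_\lambda(y)|$ to $|u(x)-u(y)|$'' cannot produce the weight $(|u(x)-u(y)|+|u_\lambda(x)-u(y)|)^{p-2}$ integrated over all of $\Sigma_\lambda$. (With the paper's normalization your reflected form is also missing an overall factor $\tfrac12$; that is harmless.)

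The missing idea is to borrow from the first term, whose kernel dominates the reflected one on $\Sigma_\lambda\times\Sigma_\lambda$. Split
$|x-y|^{-n-sp}=\bigl(|x-y|^{-n-sp}-|x-y^\lambda|^{-n-sp}\bigr)+|x-y^\lambda|^{-n-sp}$.

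The piece with the nonnegative kernel difference is the paper's $I_1$ (resp.\ $J_1$). It is $\ge0$ by exactly your monotonicity argument.

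In the remaining piece, the bracket $G(u(x)-u(y))-G(u_\lambda(x)-u_\lambda(y))$ is antisymmetric in $(x,y)$ and $|x-y^\lambda|$ is symmetric. So $\tfrac12\iint(\cdots)(\phi(x)-\phi(y))$ becomes $\iint(\cdots)\phi(x)$. Add your cross term, also rewritten with $\phi(x)$ alone as you did, and regroup:
\[
\iint_{\Sigma_\lambda\times\Sigma_\lambda}\frac{\phi(x)}{|x-y^\lambda|^{n+sp}}\Bigl\{\bigl[G(u(x)-u(y))-G(u_\lambda(x)-u(y))\bigr]+\bigl[G(u(x)-u_\lambda(y))-G(u_\lambda(x)-u_\lambda(y))\bigr]\Bigr\}\,dx\,dy .
\]
This is the paper's $I_2'+I_2''$.

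Each bracket compares $G$ at two arguments with the same $y$-entry. Wherever $\phi(x)>0$ we have $u(x)>u_\lambda(x)$, so each bracket is $\ge0$ for every $1<p<\infty$. These are precisely the two comparisons $u(x)-u_\lambda(y)>u_\lambda(x)-u_\lambda(y)$ and $u_\lambda(x)-u(y)<u(x)-u(y)$ that you wrote down and called ``not directly comparable.'' The extra terms needed to use them come from the $|x-y^\lambda|^{-n-sp}$ part of the first term.

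For \eqref{eq:whole-space-reflected-energy}, discard $J_1$ and the second bracket. Then apply $G(a)-G(b)\ge c_p(|a|+|b|)^{p-2}(a-b)$ to the first bracket with $a-b=\psi(x)$. This yields exactly the stated weight, for all $y\in\Sigma_\lambda$.
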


\begin{proof}
Since \(\varphi=0\) a.e. in \(\Omega_\lambda^c\), we have \(\varphi\in W_0^{s,p}(\Omega_\lambda)\). Thus $\varphi$ is an admissible test function. Write
\[
\left\langle (-\Delta)^s_pu-(-\Delta)^s_pu_\lambda, \varphi \right\rangle =I_1+I'_2+I''_2,
\]
where
\begin{equation}\label{eq:I1}
  \begin{aligned}
I_1
&:=\frac12\iint_{\Sigma_\lambda\times\Sigma_\lambda}
\left(
\frac1{|x-y|^{n+sp}}-\frac1{|x-y^\lambda|^{n+sp}}
\right)
\bigl[G(u(x)-u(y))-G(u_\lambda(x)-u_\lambda(y))\bigr]\\
&\hspace{42mm}\cdot
\bigl(\varphi(x)-\varphi(y)\bigr)\,dx\,dy,
\end{aligned}
\end{equation}
\[
\begin{aligned}
I'_2
&:=\iint_{\Sigma_\lambda\times\Sigma_\lambda}
\frac1{|x-y^\lambda|^{n+sp}}
\bigl[G(u(x)-u(y))-G(u_\lambda(x)-u(y))\bigr]
\varphi(x)\,dx\,dy,
\end{aligned}
\]
and
\[
\begin{aligned}
I''_2
&:=\iint_{\Sigma_\lambda\times\Sigma_\lambda}
\frac1{|x-y^\lambda|^{n+sp}}
\bigl[G(u(x)-u_\lambda(y))-G(u_\lambda(x)-u_\lambda(y))\bigr]
\varphi(x)\,dx\,dy.
\end{aligned}
\]
We claim that the integrands of $I_1$, $I_2'$ and $I_2''$ are nonnegative. First, by the elementary identity
\[
G(b)-G(a) =(p-1)(b-a)\int_0^1 |a+t(b-a)|^{p-2}\, dt,
\]
we get
\begin{equation}\label{eq 2.7}
    \begin{aligned}
&G(u(x)-u(y))-G(u_\lambda(x)-u_\lambda(y))                      \\
&\quad=
(p-1)(\psi(x)-\psi(y))
\int_0^1
\left|
 u_\lambda(x)-u_\lambda(y)+t(\psi(x)-\psi(y))
\right|^{p-2}\,dt.
\end{aligned}
\end{equation}
Moreover, for every $a,b\in\R$,
\begin{equation}\label{eq 2.8}
         (a-b)(a^+-b^+)
        \ge |a^+-b^+|^2.
\end{equation}
Moreover, we have
\begin{equation}\label{eq 2.2}
       \frac1{|x-y|^{n+sp}}>\frac1{|x-y^\lambda|^{n+sp}}.
\end{equation}
Using \eqref{eq 2.2}, \eqref{eq 2.7}, and \eqref{eq 2.8}, we obtain that the integrand of $I_1$ is nonnegative. Since $G$ is nondecreasing, for every fixed $x\in\{\varphi> 0\}$ and $y\in\Sigma_\lambda$,  we have
\[
G(u(x)-u(y))-G(u_\lambda(x)-u(y))\ge0,
\]
and
\[
G(u(x)-u_\lambda(y))-G(u_\lambda(x)-u_\lambda(y))\ge0.
\]
Therefore the integrands of $I_2'$ and $I_2''$ are nonnegative. Thus
\[
\left\langle (-\Delta)^s_pu-(-\Delta)^s_pu_\lambda, \varphi \right\rangle =I_1+I'_2+I''_2\ge0.
\]
Similarly, we decompose $ \left\langle (-\Delta)^s_pu-(-\Delta)^s_pu_\lambda, \varphi_\varepsilon \right\rangle$ into $J_1+J'_2+J''_2$, where
\[
\begin{aligned}
J_1
&:=\frac12\iint_{\Sigma_\lambda\times\Sigma_\lambda}
\left(
\frac1{|x-y|^{n+sp}}-\frac1{|x-y^\lambda|^{n+sp}}
\right)
\bigl[G(u(x)-u(y))-G(u_\lambda(x)-u_\lambda(y))\bigr]\\
&\hspace{42mm}\cdot
\bigl(\varphi_\varepsilon(x)-\varphi_\varepsilon(y)\bigr)\,dx\,dy,
\end{aligned}
\]
\[
\begin{aligned}
J'_2
&:=\iint_{\Sigma_\lambda\times\Sigma_\lambda}
\frac1{|x-y^\lambda|^{n+sp}}
\bigl[G(u(x)-u(y))-G(u_\lambda(x)-u(y))\bigr]
\varphi_\varepsilon(x)\,dx\,dy,
\end{aligned}
\]
and
\[
\begin{aligned}
J''_2
&:=\iint_{\Sigma_\lambda\times\Sigma_\lambda}
\frac1{|x-y^\lambda|^{n+sp}}
\bigl[G(u(x)-u_\lambda(y))-G(u_\lambda(x)-u_\lambda(y))\bigr]
\varphi_\varepsilon(x)\,dx\,dy.
\end{aligned}
\]
Since \(t\mapsto(t-\varepsilon)^+\) is nondecreasing, as in the proof of \eqref{eq:phi>0}, the integrands of $J_1$, $J_2'$ and $J_2''$ are nonnegative. Thus \eqref{eq:phi-epsilon-positive} holds.

For \(1<p\le2\), one can verify the inequality
\begin{equation}\label{eq:lemma6-monotonicity-p-less-2}
 G(a)-G(b)
 \ge c_p(|a|+|b|)^{p-2}(a-b)
 \qquad\text{whenever }a>b.
\end{equation}
Hence
\begin{equation*}
  \begin{aligned}
 &\quad \left\langle
 (-\Delta)^s_pu-(-\Delta)^s_pu_\lambda,\varphi_\varepsilon
 \right\rangle\\
& \ge J'_2=\int_{A_{\lambda,\varepsilon}}\int_{\Sigma_\lambda}
 \frac{
 G(u(x)-u(y))-G(u_\lambda(x)-u(y))}
 {|x-y^\lambda|^{n+sp}}
 \varphi_\varepsilon(x)\,dy\,dx\\
  &\ge c_p\int_{A_{\lambda,\varepsilon}}
 \psi(x)\varphi_\varepsilon(x)
 \int_{\Sigma_\lambda}
 \frac{
 \bigl(
 |u(x)-u(y)|+|u_\lambda(x)-u(y)|
 \bigr)^{p-2}}
 {|x-y^\lambda|^{n+sp}}\,dy\,dx.
\end{aligned}
\end{equation*}
The last inequality is due to \eqref{eq:lemma6-monotonicity-p-less-2}.
\end{proof}

We then give a geometric estimate on a half-space, which will be used in the small region principle.
\begin{lemma}
\label{lem:halfspace-exterior-potential}
Let \(0<s<1\), \(1<p<\infty\), and let \(H\subset \mathbb R^n\) be an open half-space. Let \(D\subset H\) be a measurable bounded set, and $0<|D|<\infty$. Then there exists a constant \(C=C(n,s,p)>0\) such that, for every \(x\in H\),
\[
\int_{H\setminus D} \frac{dy}{|x-y|^{n+sp}} \ge C|D|^{-sp/n}.
\]
\end{lemma}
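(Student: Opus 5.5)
The idea is a rearrangement argument: among all sets of the same measure as \(D\) sitting inside \(H\), the integral \(\int_{H\setminus D}|x-y|^{-n-sp}\,dy\) is smallest when \(D\) is packed as close to \(x\) as possible. We then evaluate that extremal configuration by scaling.

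Fix \(x\in H\) and set \(\rho>0\) by
\[
|B_\rho(x)\cap H| = |D|.
\]
Since \(x\in H\), at least half of \(B_\rho(x)\) lies in \(H\). Hence \(|B_\rho(x)\cap H|\ge \tfrac12\omega_n\rho^n\), where \(\omega_n=|B_1|\), and so
\[
\rho\le (2|D|/\omega_n)^{1/n}.
\]
Write \(E:=B_\rho(x)\cap H\). The kernel \(k(y)=|x-y|^{-n-sp}\) is radially decreasing about \(x\): it is at least \(\rho^{-n-sp}\) on \(E\) and at most \(\rho^{-n-sp}\) on \(H\setminus B_\rho(x)\). Since \(|D|=|E|\), we also have \(|D\setminus E|=|E\setminus D|\). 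Therefore
\[
\int_{H\setminus D}k-\int_{H\setminus E}k=\int_{E\setminus D}k-\int_{D\setminus E}k\ge \rho^{-n-sp}\bigl(|E\setminus D|-|D\setminus E|\bigr)=0.
\]
Here we used that \(D\setminus E\subset H\setminus B_\rho(x)\) and \(E\setminus D\subset E\). All these integrals are finite because \(D\) and \(E\) avoid no neighborhood issue except near \(x\). Integrability near \(x\) is not needed on \(E\setminus D\), since a lower bound suffices there; if \(\int_{E\setminus D}k=\infty\) the claim is trivial.

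It therefore suffices to bound \(\int_{H\setminus B_\rho(x)}k\) from below. The set \(H\setminus B_\rho(x)\) contains the half of the complement that lies on the \(H\)-side of the hyperplane through \(x\) parallel to \(\partial H\). Precisely, if \(H=\{y\cdot\nu>a\}\), it contains
\[
\{y:(y-x)\cdot\nu>0,\ |y-x|\ge\rho\},
\]
because \(x\cdot\nu>a\). By symmetry the integral of \(k\) over this half-exterior equals half of \(\int_{|z|\ge\rho}|z|^{-n-sp}\,dz\), which is
\[
\frac{n\omega_n}{2sp}\,\rho^{-sp}.
\]
Combining this with \(\rho\le(2|D|/\omega_n)^{1/n}\) gives
\[
\int_{H\setminus D}k\ge \frac{n\omega_n}{2sp}\Bigl(\frac{\omega_n}{2}\Bigr)^{sp/n}|D|^{-sp/n},
\]
with a constant depending only on \(n,s,p\).

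The only delicate point is the bathtub-type comparison, and the two facts that make it work. First, \(\rho\) exists: \(r\mapsto|B_r(x)\cap H|\) is continuous, increasing from \(0\) to \(\infty\), and \(|D|\) is finite and positive. Second, we can measure the deficit of \(D\) relative to \(E\) against the level value \(\rho^{-n-sp}\). The rest is the explicit radial integration.
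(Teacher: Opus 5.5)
Your proof is correct, but it takes a different route from the paper's. The paper uses no rearrangement. It takes $R=(4|D|/\omega_n)^{1/n}$, so that $|B_R(x)\cap H|\ge\frac12\omega_nR^n=2|D|$ and hence $|(B_R(x)\cap H)\setminus D|\ge|D|$. On that set the kernel is at least $R^{-n-sp}$, which gives the bound $|D|R^{-n-sp}=(\omega_n/4)^{(n+sp)/n}|D|^{-sp/n}$ in two lines, using only the part of $H\setminus D$ inside a ball of radius comparable to $|D|^{1/n}$.

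Your bathtub argument does more. It identifies the worst configuration $D=B_\rho(x)\cap H$ and then uses the tail of the kernel outside $B_\rho(x)$, which gives the larger explicit constant $\frac{n\omega_n}{2sp}(\omega_n/2)^{sp/n}$ and captures its $1/(sp)$ growth. The cost is extra bookkeeping: the existence of $\rho$, the identity $|E\setminus D|=|D\setminus E|$, and finiteness. Since the small region principle only needs some $C(n,s,p)$, the paper's cruder argument is enough there.

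Your sentence about finiteness is garbled. What the subtraction actually requires is $\int_{H\setminus(D\cup E)}k<\infty$, which holds because that set lies outside $B_\rho(x)$. You can avoid subtracting altogether by writing $\int_{H\setminus D}k=\int_{H\setminus(D\cup E)}k+\int_{E\setminus D}k\ge\int_{H\setminus(D\cup E)}k+\int_{D\setminus E}k=\int_{H\setminus E}k$. This chain is valid in $[0,\infty]$ and makes the separate case $\int_{E\setminus D}k=\infty$ unnecessary.
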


\begin{proof}
Let \(\omega_n:=|B_1(0)|\), and choose $R:=\left(\frac{4|D|}{\omega_n}\right)^{1/n}$. For every \(x\in H\) we have
\[
|B_R(x)\cap H|\ge \frac12 \omega_n R^n = 2|D|.
\]
Therefore
\[
|(B_R(x)\cap H)\setminus D| \ge |B_R(x)\cap H|-|D| \ge |D|.
\]
Consequently,
\[
\begin{aligned}
\int_{H\setminus D}
\frac{dy}{|x-y|^{n+sp}}
&\ge
\int_{(B_R(x)\cap H)\setminus D}
\frac{dy}{|x-y|^{n+sp}} \ge
R^{-n-sp}|(B_R(x)\cap H)\setminus D|  \\
&\ge
|D|R^{-n-sp} \ge
C(n,s,p)|D|^{-sp/n}.
\end{aligned}
\]
\end{proof}

The following algebraic inequality will be used in the proof of Theorem~\ref{Liouville thm}.
\begin{lemma}
\label{lem:two-scale-algebraic-inequality}
Let \(a,b,\alpha>0\). Then there exists \(c=c(a,b,\alpha)>0\) such that, for every \(d>0\) and \(t\ge0\),
\begin{equation*}
        ad^{-\alpha}t^2+bdt
        \ge c t^{(\alpha+2)/(\alpha+1)}.
\end{equation*}
\end{lemma}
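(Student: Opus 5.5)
The plan is to minimize $g(d):=ad^{-\alpha}t^2+bdt$ over $d>0$ for fixed $t$; a lower bound for this minimum is uniform in $d$. The case $t=0$ is trivial because both sides vanish. So I assume $t>0$.

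For fixed $t>0$, the map $d\mapsto g(d)$ is smooth and convex on $(0,\infty)$. It tends to $+\infty$ both as $d\to0^+$ and as $d\to\infty$. Setting $g'(d)=-\alpha a d^{-\alpha-1}t^2+bt=0$ gives the unique critical point
\[
d_*=\Bigl(\frac{\alpha a t}{b}\Bigr)^{1/(\alpha+1)}.
\]
Substituting back, each term scales like $t\,d_*\sim t^{1+1/(\alpha+1)}=t^{(\alpha+2)/(\alpha+1)}$. This yields
\[
g(d)\ge g(d_*)=c\,t^{(\alpha+2)/(\alpha+1)},
\]
with an explicit $c=c(a,b,\alpha)>0$. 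For example,
\[
c=b\Bigl(\frac{\alpha a}{b}\Bigr)^{1/(\alpha+1)}\Bigl(1+\frac1\alpha\Bigr),
\]
using $ad_*^{-\alpha}t^2=\frac{b}{\alpha}d_*t$ at the critical point.

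As a cross-check that avoids calculus, one can use the weighted AM--GM inequality. With weights $\frac{1}{\alpha+1}$ and $\frac{\alpha}{\alpha+1}$, write
\[
ad^{-\alpha}t^2+bdt=\tfrac{1}{\alpha+1}X+\tfrac{\alpha}{\alpha+1}Y,
\qquad X=(\alpha+1)ad^{-\alpha}t^2,\quad Y=\tfrac{\alpha+1}{\alpha}bdt.
\]
Then $\tfrac{1}{\alpha+1}X+\tfrac{\alpha}{\alpha+1}Y\ge X^{1/(\alpha+1)}Y^{\alpha/(\alpha+1)}$. In this product $d$ cancels, since $d^{-\alpha/(\alpha+1)}d^{\alpha/(\alpha+1)}=1$, and the powers of $t$ combine to $t^{(2+\alpha)/(\alpha+1)}$.

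There is no real obstacle here. The only point needing care is verifying the exponent bookkeeping, so that the $d$-dependence cancels exactly and the constant depends only on $a,b,\alpha$.
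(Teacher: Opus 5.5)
Your proof is correct and is essentially the paper's argument. The paper substitutes $r=d\,t^{-1/(\alpha+1)}$ to factor out $t^{(\alpha+2)/(\alpha+1)}$ and takes $c=\inf_{r>0}(ar^{-\alpha}+br)$; your minimizer $d_*t^{-1/(\alpha+1)}$ is the minimizing $r$, your explicit constant equals that infimum, and your weighted AM--GM cross-check is also valid.
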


\begin{proof}
The assertion is immediate for \(t=0\). If \(t>0\), set $c:=\inf\limits_{r>0}\bigl(ar^{-\alpha}+br\bigr)$ and \(r=dt^{-1/(\alpha+1)}\). Then $c$ is positive, because \(ar^{-\alpha}+br\) tends to \(+\infty\) as \(r\downarrow0\) or \(r\to\infty\). Then
\[
ad^{-\alpha}t^2+bdt =t^{(\alpha+2)/(\alpha+1)} \bigl(ar^{-\alpha}+br\bigr)\ge c t^{(\alpha+2)/(\alpha+1)}.
\]
\end{proof}

We also need the following inequality to estimate the difference.
\begin{lemma}\label{lem:power-difference-all-q}
Let \(q>0\), and set
\[
c_q:=\max\{1,q\}, \qquad \vartheta_q:=\min\{1,q\}.
\]
For every \(0\le b<a\),
\begin{equation}\label{eq:power-difference-linearized}
        0<a^q-b^q
        \le c_q a^{q-1}(a-b).
\end{equation}
Moreover, for every \(M>0\) and \(0\le b<a\le M\),
\begin{equation}\label{eq:power-difference-holder}
        a^q-b^q
        \le L_{q,M}(a-b)^{\vartheta_q},
\end{equation}
where
\[
L_{q,M}:=
        \begin{cases}
        1, & 0<q\le1,\\
        qM^{q-1}, & q>1.
        \end{cases}
\]
\end{lemma}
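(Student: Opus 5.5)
The inequality \eqref{eq:power-difference-linearized} will come from the mean value theorem, and \eqref{eq:power-difference-holder} from the elementary subadditivity/concavity of $t\mapsto t^q$, splitting into the cases $0<q\le1$ and $q>1$.

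For \eqref{eq:power-difference-linearized}, strict positivity $a^q-b^q>0$ is immediate since $t\mapsto t^q$ is strictly increasing on $[0,\infty)$ and $b<a$. By the mean value theorem, $a^q-b^q=q\xi^{q-1}(a-b)$ for some $\xi\in(b,a)$; note $\xi>0$ even when $b=0$. If $q\ge1$, then $\xi^{q-1}\le a^{q-1}$, which gives the bound with constant $q=c_q$. If $0<q<1$, the mean value form is useless because $\xi^{q-1}$ blows up as $\xi\to0$. Instead, I would use homogeneity: set $t=b/a\in[0,1)$ and reduce the claim to $1-t^q\le 1-t$. This holds because $t^q\ge t$ on $[0,1]$ when $q<1$. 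Multiplying back by $a^q$ gives $a^q-b^q\le a^{q-1}(a-b)$ with $c_q=1$.

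For \eqref{eq:power-difference-holder} with $q>1$, I would combine \eqref{eq:power-difference-linearized} with $a\le M$ to get $a^q-b^q\le qa^{q-1}(a-b)\le qM^{q-1}(a-b)$, which is the claim since $\vartheta_q=1$. For $0<q\le1$, I would use subadditivity: $a^q=(b+(a-b))^q\le b^q+(a-b)^q$. To justify it, normalize and show $(1+r)^q\le 1+r^q$ for $r\ge0$; the function $g(r)=1+r^q-(1+r)^q$ satisfies $g(0)=0$ and $g'(r)=q(r^{q-1}-(1+r)^{q-1})\ge0$. This gives the bound with $L_{q,M}=1$ and no dependence on $M$.

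The only mildly delicate point is the singular exponent $q<1$ in the linearized bound, where the naive mean value estimate fails near $b=0$. The scaling reduction to $t\in[0,1)$ handles this case cleanly.
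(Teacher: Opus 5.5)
Your proposal is correct and follows essentially the same route as the paper: the mean value theorem for $q>1$ (your use of it for $q\ge1$ is equally fine), the homogeneity substitution $t=b/a$ with $t^q\ge t$ for $0<q\le1$, and subadditivity of $r\mapsto r^q$ for the H\"older bound. Your derivative argument for $(1+r)^q\le 1+r^q$ supplies the justification of subadditivity that the paper only invokes.
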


\begin{proof}
If \(q>1\), both estimates follow from the mean value theorem. Let \(0<q\le1\). Writing \(t=b/a\in[0,1)\), we obtain
\[
a^q-b^q=a^q(1-t^q) \le a^q(1-t)=a^{q-1}(a-b),
\]
because \(t^q\ge t\). This proves \eqref{eq:power-difference-linearized}. Moreover, the subadditivity of \(r\mapsto r^q\) on \([0,\infty)\) gives
\[
a^q=\bigl(b+(a-b)\bigr)^q \le b^q+(a-b)^q,
\]
and hence \eqref{eq:power-difference-holder}.
\end{proof}

\section{Proof of the small region principle}\label{sec:smal}

\begin{proof}[Proof of Theorem~\ref{lem:small-region-p-less-2}]
Let \(\psi\) and \(\varphi\) be given by \eqref{eq def of test func}. The second condition in \eqref{eq:lemma6-assumption} implies that \(\Sigma_\lambda^-\subset D\) up to a null set. Using the decomposition from the proof of Lemma~\ref{lem:reflection-inequality}, write
\[
\left\langle (-\Delta)^s_pu-(-\Delta)^s_pu_\lambda, \varphi \right\rangle =I_1+I'_2+I''_2.
\]
%where
%\[
%\begin{aligned}
%I_1
%&:=\iint_{\Sigma_\lambda\times\Sigma_\lambda}
%\left(
%\frac1{|x-y|^{n+sp}}-\frac1{|x-y^\lambda|^{n+sp}}
%\right)
%\bigl[G(u(x)-u(y))-G(u_\lambda(x)-u_\lambda(y))\bigr]\\
%&\hspace{42mm}\cdot
%\bigl(\varphi(x)-\varphi(y)\bigr)\,dx\,dy,
%\end{aligned}
%\]
%\[
%\begin{aligned}
%I'_2
%&:=2\iint_{\Sigma_\lambda\times\Sigma_\lambda}
%\frac1{|x-y^\lambda|^{n+sp}}
%\bigl[G(u(x)-u(y))-G(u_\lambda(x)-u(y))\bigr]
%\varphi(x)\,dx\,dy,
%\end{aligned}
%\]
%and
%\[
%\begin{aligned}
%I''_2
%&:=2\iint_{\Sigma_\lambda\times\Sigma_\lambda}
%\frac1{|x-y^\lambda|^{n+sp}}
%\bigl[G(u(x)-u_\lambda(y))-G(u_\lambda(x)-u_\lambda(y))\bigr]
%\varphi(x)\,dx\,dy.
%\end{aligned}
%\]
%For \(1<p\le2\),
%\begin{equation}\label{eq:lemma6-monotonicity-p-less-2}
%        \bigl(G(a)-G(b)\bigr)(a-b)
%        \ge c_p(|a|+|b|)^{p-2}|a-b|^2,
%        \qquad a,b\in\mathbb R.
%\end{equation}
Thus \eqref{eq:lemma6-monotonicity-p-less-2} and \eqref{eq 2.8} imply
\begin{equation}\label{eq:23}
\begin{aligned}
&\bigl[G(u(x)-u(y))-G(u_\lambda(x)-u_\lambda(y))\bigr]
 \bigl(\varphi(x)-\varphi(y)\bigr)\\
\ge&
 c_p(|u(x)-u(y)|+|u_\lambda(x)-u_\lambda(y)|)^{p-2}
 \bigl(\psi(x)-\psi(y)\bigr)
 \bigl(\varphi(x)-\varphi(y)\bigr) \\
\ge &c_p \|u\|_{L^\infty(B_1)}^{p-2}\bigl(\psi(x)-\psi(y)\bigr)
 \bigl(\varphi(x)-\varphi(y)\bigr) \\
\ge &c_p \|u\|_{L^\infty(B_1)}^{p-2}|\varphi(x)-\varphi(y)|^2.
\end{aligned}
\end{equation}
Using \eqref{eq:I1}, we have
\begin{equation}\label{eq:reflected-energy-I1}
\begin{aligned}
        I_1
        &\ge C_1
        \iint_{\Sigma_\lambda\times\Sigma_\lambda}
        \left(
        \frac1{|x-y|^{n+sp}}-\frac1{|x-y^\lambda|^{n+sp}}
        \right)
        |\varphi(x)-\varphi(y)|^2\,dx\,dy,
\end{aligned}
\end{equation}
where $C_1=C_1(n,s,p,\|u\|_{L^\infty(B_1)})$. We next estimate $I_2'$ and $I_2''$. \eqref{eq:lemma6-monotonicity-p-less-2} gives
\[
\begin{aligned}
 &\bigl[G(u(x)-u(y))-G(u_\lambda(x)-u(y))\bigr]\varphi(x)\\
 \ge& c_p(|u(x)-u(y)|+|u_\lambda(x)-u(y)|)^{p-2}\psi(x)\varphi(x)\\
 \ge& C_1\varphi(x)^2,
\end{aligned}
\]
after decreasing \(C_1>0\) if necessary. Thus
\[
I'_2\ge C_1\iint_{\Sigma_\lambda\times\Sigma_\lambda} \frac{\varphi(x)^2}{|x-y^\lambda|^{n+sp}}\,dx\,dy.
\]
Similarly we have
\[
I''_2\ge C_1\iint_{\Sigma_\lambda\times\Sigma_\lambda} \frac{\varphi(x)^2}{|x-y^\lambda|^{n+sp}}\,dx\,dy = C_1\iint_{\Sigma_\lambda\times\Sigma_\lambda} \frac{\varphi(y)^2}{|x-y^\lambda|^{n+sp}}\,dx\,dy.
\]
Therefore,
\begin{equation}\label{eq:reflected-energy-I2}
\begin{aligned}
 I'_2+I''_2 \ge   C_1  \iint_{\Sigma_\lambda\times\Sigma_\lambda}
  \frac{\varphi(x)^2+\varphi(y)^2}{|x-y^\lambda|^{n+sp}} \,dx\,dy \ge   C_1  \iint_{\Sigma_\lambda\times\Sigma_\lambda}
  \frac{|\varphi(x)-\varphi(y)|^2}{|x-y^\lambda|^{n+sp}} \,dx\,dy.
\end{aligned}
\end{equation}
\eqref{eq:reflected-energy-I1}--\eqref{eq:reflected-energy-I2} imply
\begin{equation}\label{eq:reflected-energy-full-kernel}
        \left\langle
        (-\Delta)^s_pu-(-\Delta)^s_pu_\lambda,
        \varphi
        \right\rangle
        \ge C_1
        \iint_{\Sigma_\lambda\times\Sigma_\lambda}
        \frac{|\varphi(x)-\varphi(y)|^2}{|x-y|^{n+sp}}
        \,dx\,dy.
\end{equation}
Since \(\varphi=0\) a.e. in \(\Sigma_\lambda\setminus\Sigma_\lambda^-\), Lemma~\ref{lem:halfspace-exterior-potential} and \eqref{eq:reflected-energy-full-kernel} give
\begin{equation}\label{eq:reflected-energy-small-support}
\begin{aligned}
        \left\langle
        (-\Delta)^s_pu-(-\Delta)^s_pu_\lambda,
        \varphi
        \right\rangle
        &\ge C_1
        \int_{\Sigma_\lambda^-}\varphi(x)^2
        \left(
        \int_{\Sigma_\lambda\setminus\Sigma_\lambda^-}
        \frac{dy}{|x-y|^{n+sp}}
        \right)dx\\
         &\ge C_1
        \int_{\Sigma_\lambda^-}\varphi(x)^2
        \left(
        \int_{\Sigma_\lambda\setminus D}
        \frac{dy}{|x-y|^{n+sp}}
        \right)dx\\
        &\ge C_1
        |D|^{-sp/n}
        \int_{\Sigma_\lambda^-}\varphi^2\,dx.
\end{aligned}
\end{equation}
On the other hand, testing the first inequality in \eqref{eq:lemma6-assumption} with \(\varphi\), and using \eqref{eq:lemma6-c-lower-bound}, gives
\begin{equation}\label{eq:small-region-energy-upper}
        \left\langle
        (-\Delta)^s_pu-(-\Delta)^s_pu_\lambda,
        \varphi
        \right\rangle
        \le M\int_{\Sigma_\lambda^-}\varphi^2\,dx.
\end{equation}
%If \(|\Sigma_\lambda^-|>0\), then
%\eqref{eq:reflected-energy-small-support} and
%\(|\Sigma_\lambda^-|\le|D|\) yield
%\[
%        \left\langle
%(-\Delta)^s_pu-(-\Delta)^s_pu_\lambda,
%        \varphi
%        \right\rangle
%        \ge C_1|D|^{-sp/n}
%        \int_{\Omega_\lambda^-}\varphi^2\,dx.
%\]
Choose \(\delta>0\) so small that \(C_1\delta^{-sp/n}>M\). If \(|D|\le\delta\), then \eqref{eq:reflected-energy-small-support} and \eqref{eq:small-region-energy-upper} imply \(|\Sigma_\lambda^-|=0\), which is equivalent to \(u_\lambda\ge u\) a.e. in \(\Sigma_\lambda\).
\end{proof}

\section{Proof of Theorem~\ref{Liouville thm}}\label{sec:bounded-domain-proof}
\begin{proof}[Proof of Theorem~\ref{Liouville thm}]
Set
\begin{equation}\label{eq:bounded-f-Lipschitz-constant}
 L_u:=\sup_{\substack{a,b\in[0,\|u\|_{L^\infty(B_1)}]\\ a\ne b}}
 \frac{|f(a)-f(b)|}{|a-b|}<\infty.
\end{equation}
Since \(f(u)\in L^\infty(B_1)\), \cite[Theorem~1.1]{MR4788673} shows that \(u\) has a continuous representative on \(\mathbb R^n\), after extending it by zero outside \(B_1\).
%Moreover, with \(g:=-f\), we have
%\[
%(-\Delta)^s_pu+g(u)=0\ge g(0)
% \qquad\text{weakly in }B_1
%\]
%by \eqref{eq:f-at-zero-nonnegative}. Since a locally Lipschitz function belongs to \(C(\mathbb R)\cap BV_{\mathrm{loc}}(\mathbb R)\),
\cite[Theorem~2.6]{MR4604546} yields
\begin{equation}\label{eq:bounded-general-f-positivity}
        u>0\qquad\text{in }B_1.
\end{equation}

\noindent \textit{Step 1: For $\lambda>-1$ close enough to $-1$, $u_\lambda\ge u$ in $\Omega_\lambda$.}

Fix \(\lambda\in(-1,0)\), and let \(\Omega_\lambda^-\) be given by \eqref{def:Omega}. If \(\Omega_\lambda^-=\varnothing\), then $u_\lambda\ge u$ in $\Omega_\lambda$ holds for this \(\lambda\). Hence we may assume \(\Omega_\lambda^-\ne\varnothing\). Then
\begin{equation*}
\begin{cases}
 (-\Delta)^s_pu_\lambda-(-\Delta)^s_pu
        -\bigl(f(u_\lambda)-f(u)\bigr)=0
        & \text{weakly in }\Omega_\lambda^-,\\
u_\lambda-u\ge0
        & \text{a.e. in }\Sigma_\lambda\setminus \Omega_\lambda^-.
\end{cases}
\end{equation*}
For \(x\in\Omega_\lambda^-\), define
\[
c_\lambda(x):=-\frac{f(u_\lambda(x))-f(u(x))} {u_\lambda(x)-u(x)}.
\]
Then
\begin{equation}\label{eq:step1-p-less-2-srp-weak-equation}
        (-\Delta)^s_pu_\lambda-(-\Delta)^s_pu
        +c_\lambda(x)(u_\lambda-u)=0
        \qquad\text{weakly in }\Omega_\lambda^-,
\end{equation}
and the Lipschitz continuity of \(f\) on the range of \(u\) gives
\begin{equation}\label{eq:step1-p-less-2-srp-c-lower}
        c_\lambda(x)\ge-L_u
        \quad\text{for }x\in\Omega_\lambda^-.
\end{equation}
Then \eqref{eq:step1-p-less-2-srp-weak-equation} and \eqref{eq:step1-p-less-2-srp-c-lower} are exactly the hypotheses of the small region principle, Theorem~\ref{lem:small-region-p-less-2}, on the domain \(\Omega_\lambda^-\). Let $\delta$ be the constant given by Theorem~\ref{lem:small-region-p-less-2}. Choose $\lambda>-1$ so close to $-1$ that $ |\Omega_\lambda|\le\delta$. Applying Theorem~\ref{lem:small-region-p-less-2} gives $u_\lambda\ge u$ in $\Omega_\lambda$. This completes Step~1.

Define
\begin{equation*}
\lambda_0:=\sup\Bigl\{
\lambda\in(-1,0]:
 u_\mu\ge u \ \text{a.e. in }\Omega_\mu
 \text{ for every }\mu\in(-1,\lambda]
\Bigr\}.
\end{equation*}

\noindent\textit{Step 2: \(\lambda_0=0\).}

Suppose, by contradiction, that \(\lambda_0<0\). Then
\begin{equation}\label{eq:step2-no-smp-order-at-lambda0}
        u_{\lambda_0}\ge u
        \qquad\text{in }\Omega_{\lambda_0}.
\end{equation}
We claim that
\begin{equation}\label{eq:step2-no-smp-not-identical}
        u_{\lambda_0}-u\not\equiv0
        \qquad\text{in }\Omega_{\lambda_0}.
\end{equation}
Indeed, if \(u_{\lambda_0}-u\equiv0\) in \(\Omega_{\lambda_0}\), continuity up to the boundary gives
\[
u_{\lambda_0}=u=0 \qquad\text{on } \partial B_1\cap\{x_1<\lambda_0\}.
\]
For any \(z\in\partial B_1\cap\{x_1<\lambda_0\}\),
\[
0=u_{\lambda_0}(z)=u(z^{\lambda_0}).
\]
Notice that \(z^{\lambda_0}\in B_1\). This contradicts \eqref{eq:bounded-general-f-positivity}. Hence \eqref{eq:step2-no-smp-not-identical} holds.

It follows from \eqref{eq:step2-no-smp-order-at-lambda0}, \eqref{eq:step2-no-smp-not-identical}, and continuity that there exist \(x_*\in\Omega_{\lambda_0}\), \(r>0\), and \(\eta>0\) such that \(B_r(x_*)\Subset\Omega_{\lambda_0}\) and
\begin{equation*}
        u_{\lambda_0}-u\ge2\eta
        \qquad\text{in }\overline{B_r(x_*)}.
\end{equation*}
Extend the continuous representative of \(u\) by zero to all of \(\mathbb R^n\). For \(t\ge0\), define
\[
\begin{aligned}
 \omega_\infty(t)
 &:=\sup_{\substack{x,y\in\mathbb R^n,\,|x-y|\le t}}
       |u(x)-u(y)|.
% \omega_R(t)
% &:=\sup_{\substack{x,y\in\overline{B_R}\\ |x-y|\le t}}
%       |u(x)-u(y)|,
%       \qquad R>0.
\end{aligned}
\]
The zero extension is uniformly continuous on \(\mathbb R^n\), and hence \(\omega_\infty(t)\to0\) as \(t\downarrow0\).  For \(\varepsilon_0>0\) small enough, we may assume that $ 0<\varepsilon_0<-\lambda_0$ and $\omega_\infty(2\varepsilon_0)\le\eta$. Then, for every \(\lambda\in(\lambda_0,\lambda_0+\varepsilon_0)\),
\begin{equation}\label{eq:step2-no-smp-gap-near-lambda0}
        u_\lambda-u\ge\eta
        \qquad\text{in }\overline{B_r(x_*)}.
\end{equation}
%Indeed,
%\[
%|x^\lambda-x^{\lambda_0}|=2(\lambda-\lambda_0),
%\]
%and hence \eqref{eq:step2-no-smp-gap-near-lambda0} follows from
%\eqref{eq:step2-no-smp-gap-at-lambda0} and
%\eqref{eq:step2-no-smp-modulus}.
Fix now \(\lambda\in(\lambda_0,\lambda_0+\varepsilon_0)\) and let \(\psi\) and \(\varphi\) be defined by \eqref{eq def of test func}. %As shown after
%\eqref{eq def of test func},
%\(
%\varphi\in W^{s,p}_0(\Omega_\lambda)
%\), and in fact \(\Omega_\lambda^-\subset\Omega_\lambda\).  Moreover,
Then \eqref{eq:step2-no-smp-gap-near-lambda0} yields
\begin{equation}\label{eq:step2-no-smp-ball-signs}
        \psi\le-\eta,
        \qquad
        \varphi=0
        \qquad\text{in }B_r(x_*).
\end{equation}
If \(x_1<\lambda_0\), then \eqref{eq:step2-no-smp-order-at-lambda0} gives
\[
\begin{aligned}
\varphi(x)
&=\bigl(u(x)-u(x^\lambda)\bigr)^+ \le
\bigl(u(x^{\lambda_0})-u(x^\lambda)\bigr)^+\le
\left|u(x^{\lambda_0})-u(x^\lambda)\right| \le
\omega_\infty\bigl(2(\lambda-\lambda_0)\bigr).
\end{aligned}
\]
%\[
%\begin{aligned}
%u(x)-u(x^\lambda)
%        &\le u(x^{\lambda_0})-u(x^\lambda)
%        \le \omega_\infty\bigl(2(\lambda-\lambda_0)\bigr).
%\end{aligned}
%\]
If \(\lambda_0\le x_1<\lambda\), then
%\[
%u(x)-u(x^\lambda)
%        \le \omega_\infty\bigl(2(\lambda-x_1)\bigr)
%        \le \omega_\infty\bigl(2(\lambda-\lambda_0)\bigr).
%\]
\[
\begin{aligned}
\varphi(x)
&\le \left|u(x)-u(x^\lambda)\right|\le \omega_\infty\bigl(2(\lambda-x_1)\bigr)\le \omega_\infty\bigl(2(\lambda-\lambda_0)\bigr).
\end{aligned}
\]
Thus
\begin{equation}\label{eq:step2-no-smp-linfty-smallness}
        \|\varphi\|_{L^\infty(\Sigma_\lambda)}
        \le
        \omega_\infty\bigl(2(\lambda-\lambda_0)\bigr)\longrightarrow0
        \qquad\text{as }\lambda\downarrow\lambda_0.
\end{equation}
%In particular,
%\begin{equation}\label{eq:step2-no-smp-linfty-limit}
%        \|\varphi\|_{L^\infty(\Sigma_\lambda)}
%        \longrightarrow0
%        \qquad\text{as }\lambda\downarrow\lambda_0.
%\end{equation}
We record the estimate needed in the remainder of this step.
\begin{lemma}
\label{lem:reflected-energy-coercivity}
There exists \(c_0>0\), independent of \(\lambda\in(\lambda_0,\lambda_0+\varepsilon_0)\), such that
\begin{equation}\label{eq:step2-no-smp-energy-lower-final}
        \left\langle
        (-\Delta)^s_pu-(-\Delta)^s_pu_\lambda,
        \varphi
        \right\rangle
        \ge c_0\int_{\Omega_\lambda^-}\varphi^\theta\,dx,
        \qquad
        \theta:=\frac{sp+2}{sp+1}\in(1,2).
\end{equation}
\end{lemma}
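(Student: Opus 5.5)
The plan is to combine two lower bounds on the reflected energy at each point $x\in\Omega_\lambda^-$ and then close with Lemma~\ref{lem:two-scale-algebraic-inequality} for $\alpha:=sp$. This gives exactly $t^{(sp+2)/(sp+1)}=t^\theta$. Set $d(x):=\lambda-x_1>0$ and $K:=\|u\|_{L^\infty(B_1)}$. The two bounds have the shape
\[
a\,d(x)^{-sp}\varphi(x)^2 \quad\text{and}\quad b\,\eta\, d(x)\varphi(x).
\]
The first comes from the nonlocal interaction of $x$ with the reflected half-space, the second from the interaction of $x$ with the ball $B_r(x_*)$ on which \eqref{eq:step2-no-smp-ball-signs} holds. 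We use the decomposition $I_1+I_2'+I_2''$ from the proof of Lemma~\ref{lem:reflection-inequality}, whose three integrands are all nonnegative. Hence we may discard any portion of them and keep only what we need.

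For the first bound, we use $I_2'$ and \eqref{eq:lemma6-monotonicity-p-less-2}. Since $1<p\le2$ and all arguments are bounded by $2K$, we have $(|u(x)-u(y)|+|u_\lambda(x)-u(y)|)^{p-2}\ge (4K)^{p-2}$. On $\Omega_\lambda^-$ we also have $\psi(x)=\varphi(x)$, so
\[
I_2'\ge c\int_{\Omega_\lambda^-}\varphi(x)^2\int_{\Sigma_\lambda}\frac{dy}{|x-y^\lambda|^{n+sp}}\,dx .
\]
The substitution $z=y^\lambda$ turns the inner integral into $\int_{\{z_1>\lambda\}}|x-z|^{-n-sp}\,dz=C(n,s,p)\,d(x)^{-sp}$, by scaling to the distance from $x$ to the half-space. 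This gives $I_2'\ge a\int_{\Omega_\lambda^-}d^{-sp}\varphi^2$, with $a$ depending only on $n,s,p,K$.

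For the second bound, we restrict $I_1$ to $x\in\Omega_\lambda^-$ and $y\in B_r(x_*)$, together with the symmetric region with $x$ and $y$ swapped, which contributes the same amount. There $\varphi(y)=0$ and $\psi(y)\le-\eta$. Hence $\psi(x)-\psi(y)\ge\eta$ and $\varphi(x)-\varphi(y)=\varphi(x)$. By \eqref{eq 2.7} and the same bound $|\cdot|^{p-2}\ge (2K)^{p-2}$, the bracket in $I_1$ is at least $c\,\eta$. For the kernel difference we use
\[
|x-y^\lambda|^2-|x-y|^2=4(\lambda-x_1)(\lambda-y_1)\ge 4\,d(x)\,\rho,
\]
where $\rho:=\lambda_0-(x_*)_1-r>0$ satisfies $\lambda-y_1\ge\rho$ uniformly in $\lambda>\lambda_0$. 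Since $x,y$ and $y^\lambda$ all lie in a fixed bounded set (say $B_3$), the mean value theorem applied to $t\mapsto t^{-(n+sp)/2}$ gives
\[
|x-y|^{-n-sp}-|x-y^\lambda|^{-n-sp}\ge c\,d(x).
\]
Integrating over $y\in B_r(x_*)$ yields $I_1\ge b\,\eta\int_{\Omega_\lambda^-}d\,\varphi$, with $b$ depending on $n,s,p,K,r,\rho$ but not on $\lambda$.

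Adding the two bounds, we apply Lemma~\ref{lem:two-scale-algebraic-inequality} pointwise with $d=d(x)$ and $t=\varphi(x)$, and with constants $a$ and $b\eta$ in place of $a,b$. This gives \eqref{eq:step2-no-smp-energy-lower-final} with $c_0=c(a,b\eta,sp)$, independent of $\lambda\in(\lambda_0,\lambda_0+\varepsilon_0)$. I expect the main care to lie in the kernel-difference estimate. One must check that the lower bound $c\,d(x)$ is uniform in $\lambda$, which needs both the fixed positive gap $\rho$ between $B_r(x_*)$ and $T_\lambda$ and the boundedness of all points involved. One must also track that the $(\cdot)^{p-2}$ weights are bounded below only because $p\le2$ and $u\in L^\infty$.
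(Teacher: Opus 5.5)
Your proposal is correct and follows essentially the same route as the paper. Both arguments bound $I_2'$ below by $a\int_{\Omega_\lambda^-}(\lambda-x_1)^{-sp}\varphi^2$ and restrict $I_1$ to $\Omega_\lambda^-\times B_r(x_*)$, using the mean value theorem on the kernel difference to get $b\eta\int_{\Omega_\lambda^-}(\lambda-x_1)\varphi$. Both then close with Lemma~\ref{lem:two-scale-algebraic-inequality} for $\alpha=sp$. Your explicit scaling computation of the half-space integral, and your direct use of \eqref{eq 2.7} with the known sign $\psi(x)-\psi(y)\ge\eta$, only make explicit steps the paper states briefly.
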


\begin{proof}[Proof of Lemma~\ref{lem:reflected-energy-coercivity}]
Using the decomposition from the proof of Lemma~\ref{lem:reflection-inequality}, write
\[
\left\langle (-\Delta)^s_pu-(-\Delta)^s_pu_\lambda, \varphi \right\rangle =I_1+I'_2+I''_2.
\]
By \eqref{eq:reflected-energy-I2},
\begin{equation}\label{eq:reflected-energy-hardy-term}
\begin{aligned}
 I'_2+I''_2
 &\ge C_1\iint_{\Sigma_\lambda\times\Sigma_\lambda}
 \frac{\varphi(x)^2+\varphi(y)^2}{|x-y^\lambda|^{n+sp}}\,dx\,dy\ge C_1\iint_{\Sigma_\lambda\times\Sigma_\lambda}
 \frac{\varphi(x)^2}{|x-y^\lambda|^{n+sp}}\,dx\,dy\\
 &=C_1\int_{\Omega_\lambda^-}\varphi(x)^2
 \left(\int_{\Sigma_\lambda}
 \frac{dy}{|x-y^\lambda|^{n+sp}}\right)dx\\
 &\ge a_0\int_{\Omega_\lambda^-}
 (\lambda-x_1)^{-sp}\varphi(x)^2\,dx,
\end{aligned}
\end{equation}
where \(a_0>0\) is independent of \(\lambda\).

We next estimate \(I_1\). For \(x\in\Omega_\lambda^-\) and a.e. \(y\in B_r(x_*)\), \eqref{eq:step2-no-smp-ball-signs} and \eqref{eq:23} give
\begin{equation}\label{eq:32}
\begin{aligned}
 &\bigl[G(u(x)-u(y))-G(u_\lambda(x)-u_\lambda(y))\bigr]
 \bigl(\varphi(x)-\varphi(y)\bigr)\\
 &\quad\ge c_p\|u\|_{L^\infty(B_1)}^{p-2}
 \bigl(\psi(x)-\psi(y)\bigr)
 \bigl(\varphi(x)-\varphi(y)\bigr)\\
 &\quad=C\bigl(\psi(x)-\psi(y)\bigr)\varphi(x)
 \ge C\eta\varphi(x),
\end{aligned}
\end{equation}
where \(C>0\) is independent of \(\lambda\). By the mean value theorem, for some \(\xi\in\bigl(|x-y|^2,|x-y^\lambda|^2\bigr)\),
\[
\begin{aligned}
 \frac1{|x-y|^{n+sp}}-\frac1{|x-y^\lambda|^{n+sp}}
 =C(n,s,p)\xi^{-(n+sp)/2-1}
 (\lambda-x_1)(\lambda-y_1).
\end{aligned}
\]
Since \(\xi\le |x-y^\lambda|^2\le16\),
\begin{equation}\label{eq:reflected-energy-kernel-linear}
 \frac1{|x-y|^{n+sp}}-\frac1{|x-y^\lambda|^{n+sp}}
 \ge C(n,s,p)
 \left(\lambda_0-\sup_{z\in B_r(x_*)}z_1\right)
 (\lambda-x_1).
\end{equation}
The integrand in the definition \eqref{eq:I1} of \(I_1\) is nonnegative. Therefore, by \eqref{eq:32} and \eqref{eq:reflected-energy-kernel-linear},
\begin{equation}\label{eq:reflected-energy-anchor-term}
\begin{aligned}
 I_1
 &\ge C\eta
 \int_{\Omega_\lambda^-}\int_{B_r(x_*)}
 \left(
 \frac1{|x-y|^{n+sp}}-\frac1{|x-y^\lambda|^{n+sp}}
 \right)\varphi(x)\,dy\,dx\\
 &\ge b_0\int_{\Omega_\lambda^-}
 (\lambda-x_1)\varphi(x)\,dx,
\end{aligned}
\end{equation}
where \(b_0>0\) is independent of \(\lambda\).

Combining Lemma~\ref{lem:two-scale-algebraic-inequality}, \eqref{eq:reflected-energy-hardy-term}, and \eqref{eq:reflected-energy-anchor-term}, we obtain
\begin{equation*}
\begin{aligned}
 \left\langle
 (-\Delta)^s_pu-(-\Delta)^s_pu_\lambda,\varphi
 \right\rangle
 &\ge\int_{\Omega_\lambda^-}
 \left[a_0(\lambda-x_1)^{-sp}\varphi^2
 +b_0(\lambda-x_1)\varphi\right]dx\\
 &\ge c_0\int_{\Omega_\lambda^-}\varphi^\theta\,dx.
\end{aligned}
\end{equation*}
\end{proof}

Using the weak equations for \(u\) and \(u_\lambda\), together with \eqref{eq:bounded-f-Lipschitz-constant}, we obtain
\begin{equation}\label{eq:step2-no-smp-energy-upper}
\begin{aligned}
        \left\langle
        (-\Delta)^s_pu-(-\Delta)^s_pu_\lambda,
        \varphi
        \right\rangle
        &=\int_{\Omega_\lambda^-}
        \bigl(f(u)-f(u_\lambda)\bigr)\varphi\,dx\le L_u\int_{\Omega_\lambda^-}\varphi^2\,dx.
\end{aligned}
\end{equation}
Combining \eqref{eq:step2-no-smp-energy-upper} and \eqref{eq:step2-no-smp-energy-lower-final}, we obtain
\begin{equation}\label{eq:step2-no-smp-absorption}
\begin{aligned}
        c_0\int_{\Omega_\lambda^-}\varphi^\theta\,dx
        &\le L_u\int_{\Omega_\lambda^-}\varphi^2\,dx\le
        L_u\|\varphi\|_{L^\infty(\Sigma_\lambda)}^{2-\theta}
        \int_{\Omega_\lambda^-}\varphi^\theta\,dx.
\end{aligned}
\end{equation}
By \eqref{eq:step2-no-smp-linfty-smallness}, after decreasing \(\varepsilon_0\) once more, we have
\[
L_u\|\varphi\|_{L^\infty(\Sigma_\lambda)}^{2-\theta} <c_0 \qquad \text{for every } \lambda\in(\lambda_0,\lambda_0+\varepsilon_0).
\]
It follows from \eqref{eq:step2-no-smp-absorption} that $\varphi=0$ a.e. in $\Sigma_\lambda$, and therefore $u_\lambda\ge u$ a.e. in $\Omega_\lambda$ for every \(\lambda\in(\lambda_0,\lambda_0+\varepsilon_0)\). This contradicts the supremum definition of \(\lambda_0\). Consequently, $\lambda_0=0$. Repeating the same argument in the opposite direction, and then in every direction after a rotation, gives symmetry with respect to the origin. Since the direction is arbitrary, \(u\) is radially symmetric and radially nonincreasing in \(B_1\).

\end{proof}

\section{Proof of Theorem~\ref{thm:general}}\label{sec:whole-space-RD-proof}
\label{sec:whole-space-reflection-defect}

In this section we consider nonnegative weak solutions $u$ of \eqref{eq:whole-space-lane-emden}. The following lemma provides the lower bounds for the reflected term that are needed in the moving plane method.

%Set
%\begin{equation*}
%        \beta_*:=\frac{sp}{q-p+1}.
%\end{equation*}

%Throughout the estimates below we move the plane in the direction
%\(e_1=(1,0,\ldots,0)\). For \(\lambda\in\mathbb R\), let
%\[
%T_\lambda:=\{x\in\mathbb R^n:x_1=\lambda\},
% \qquad
% \Sigma_\lambda:=\{x\in\mathbb R^n:x_1<\lambda\},
%\]
%\[
%x^\lambda:=x+2(\lambda-x_1)e_1,
% \qquad
%u_\lambda(x):=u(x^\lambda),
%\]

\begin{lemma}
\label{lem:whole-space-K-lower}
There exists \(c_H=c_H(n,s,p)>0\) such that, for every \(\lambda\in\mathbb R\) and \(x\in\Sigma_\lambda^-\),
\begin{equation}\label{eq:whole-space-K-strip}
 \int_{\Sigma_\lambda}
 \frac{
 \bigl(
  |u(x)-u(y)|+|u_\lambda(x)-u(y)|
 \bigr)^{p-2}}
 {|x-y^\lambda|^{n+sp}}\,dy
 \ge c_H(\lambda-x_1)^{-sp}u(x)^{p-2}.
\end{equation}
If \(\lambda\le0\), then
\begin{equation}\label{eq:whole-space-K-far-negative-plane}
 \int_{\Sigma_\lambda}
 \frac{
 \bigl(
  |u(x)-u(y)|+|u_\lambda(x)-u(y)|
 \bigr)^{p-2}}
 {|x-y^\lambda|^{n+sp}}\,dy
 \ge c_H|x|^{-sp}u(x)^{p-2},
 \qquad\text{for }x\in\Sigma_\lambda^-.
\end{equation}
Let \(I\subset\mathbb R\) be a bounded interval and set $L_I:=\max\left\{1,\sup_{\mu\in I}|\mu|\right\}$. Then, for every \(\lambda\in I\) and every \(x\in\Sigma_\lambda^-\) with \(|x|\ge L_I\), there exists \(c_H'=c_H'(n,s,p)>0\) such that
\begin{equation}\label{41}
   \int_{\Sigma_\lambda}
 \frac{
 \bigl(
 |u(x)-u(y)|+|u_\lambda(x)-u(y)|
 \bigr)^{p-2}}
 {|x-y^\lambda|^{n+sp}}\,dy
 \ge c_H'|x|^{-sp}u(x)^{p-2}.
\end{equation}
\end{lemma}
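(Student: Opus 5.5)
The plan is to prove \eqref{eq:whole-space-K-strip} first and then obtain \eqref{eq:whole-space-K-far-negative-plane} and \eqref{41} from it by comparing $\lambda-x_1$ with $|x|$. Fix $\lambda$ and $x\in\Sigma_\lambda^-$, and write $a:=u(x)$, $b:=u_\lambda(x)$ and $d:=\lambda-x_1>0$. Then $0\le b<a$, and $a>0$. The exponent $p-2$ is nonpositive because $1<p\le2$. So the weight $\bigl(|a-u(y)|+|b-u(y)|\bigr)^{p-2}$ is bounded below wherever the base is at most a fixed multiple of $a$. If $0\le u(y)\le 2a$, then $|a-u(y)|\le 2a$ and $|b-u(y)|\le 2a$, so the weight is at least $(4a)^{p-2}$. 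The first step is therefore to restrict the $y$-integral to
\[
E_x:=\{y\in\Sigma_\lambda:\ u(y)\le 2u(x)\},
\]
and to reduce \eqref{eq:whole-space-K-strip} to the purely geometric bound
\[
\int_{E_x}\frac{dy}{|x-y^\lambda|^{n+sp}}\ \ge\ c\,d^{-sp}.
\]

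The geometric input is the following. As $y$ ranges over $\Sigma_\lambda$, the point $y^\lambda$ ranges over the complementary half-space $\{z_1>\lambda\}$. Its distance to $x$ is at least $d$, and the whole region $\{y\in\Sigma_\lambda:\ |x-y^\lambda|\le 3d\}$ has measure comparable to $d^n$. Integrating the kernel over this region gives $\gtrsim d^{-n-sp}\cdot d^n=d^{-sp}$, which is exactly the scaling in \eqref{eq:whole-space-K-strip}. The constant is then $c_H=4^{p-2}c(n,s,p)$, and it does not depend on $u$.

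I expect the main obstacle to be that $E_x$ could, a priori, miss a large part of this region. Where $u(y)\gg u(x)$, the weight degenerates like $u(y)^{p-2}$, and no information about $u$ is available there. My first attempt is to use the set where $u(y)\le u(x)$ together with the antisymmetric structure on $\Sigma_\lambda^-$. Any $y$ close to $x$ that lies in $\Sigma_\lambda^-$ satisfies $u_\lambda(y)<u(y)$. Running the symmetric argument with $y^\lambda$ in place of $y$ may produce a set of measure $\gtrsim d^n$ inside $\{|x-y^\lambda|\le 3d\}$ on which $u(y)\le 2u(x)$. If this fails, I would fall back on a weighted lower bound in which the weight is replaced by $\min\{(4a)^{p-2},(4u(y))^{p-2}\}$ and the degeneration is absorbed into the constant. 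I regard this step as the real content of the lemma.

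Given \eqref{eq:whole-space-K-strip}, the other two bounds are elementary. If $\lambda\le0$ and $x\in\Sigma_\lambda$, then $x_1<\lambda\le0$, so $d=\lambda-x_1\le -x_1\le|x|$. Hence $d^{-sp}\ge|x|^{-sp}$, which gives \eqref{eq:whole-space-K-far-negative-plane} with the same $c_H$. For \eqref{41}, take $\lambda\in I$ and $|x|\ge L_I$. Then $d\le|\lambda|+|x|\le L_I+|x|\le 2|x|$, so $d^{-sp}\ge 2^{-sp}|x|^{-sp}$, and we may take $c_H':=2^{-sp}c_H$.
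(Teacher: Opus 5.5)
Your derivation of \eqref{eq:whole-space-K-far-negative-plane} and \eqref{41} from \eqref{eq:whole-space-K-strip} is correct and matches the paper. The gap is in \eqref{eq:whole-space-K-strip} itself, at the step you call the real content. Your argument never uses that $u$ solves the equation, and for a general nonnegative function the inequality is false. So neither the antisymmetric structure of $\Sigma_\lambda^-$ nor a reweighting by $\min\{(4a)^{p-2},(4u(y))^{p-2}\}$ can rescue it.

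Here is a counterexample. The left-hand side only sees $u(x)$, $u_\lambda(x)$ and $u$ on $\Sigma_\lambda$. Take $u_\lambda(x)=0<u(x)=a$. Let $u$ be continuous on $\Sigma_\lambda$, equal to $M\ge 2a$ on $(\Sigma_\lambda\cap B_{Kd}(x))\setminus B_\epsilon(x)$, and zero outside $B_{2Kd}(x)$. Since $|a-t|+t\ge a$, the weight is at most $a^{p-2}$ everywhere, and at most $M^{p-2}$ where $u=M$. For $y\in\Sigma_\lambda$ one has $|x-y^\lambda|\ge\max\{d,|x-y|\}$. Hence the integral is at most $C d^{-sp}\bigl(M^{p-2}+K^{-sp}a^{p-2}+\epsilon^n d^{-n}a^{p-2}\bigr)$. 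For $p<2$ this is below any fixed multiple of $d^{-sp}a^{p-2}$ once $M/a$ and $K$ are large and $\epsilon$ is small. So $E_x$ really can be a negligible part of the relevant region.

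The missing ingredient is the weak Harnack inequality for the nonnegative supersolution $u$ of $(-\Delta)^s_pu=u^q\ge0$ in $\mathbb R^n$. The paper imports it from \cite[Lemma~3.3]{liu2025nontrivial}. Set $d=\lambda-x_1$ and $z=x-\frac32 d e_1$. Then $B_d(z)\subset\Sigma_\lambda$, $x\in B_{2d}(z)$, and
$\frac{1}{|B_d(z)|}\int_{B_d(z)}u^{p-1}\,dy\le C\bigl(\inf_{B_{2d}(z)}u\bigr)^{p-1}\le C\,u(x)^{p-1}$,
with $C=C(n,s,p)$. There is no tail term because $u\ge0$ on all of $\mathbb R^n$. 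This is exactly the quantitative control of $u(y)$ relative to $u(x)$ that your argument lacks.

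From it there are two ways to finish.
\begin{itemize}
\item By Chebyshev, the set $\{y\in B_d(z):u(y)\le Ku(x)\}$ fills at least half of $B_d(z)$ for some $K=K(n,s,p)\ge1$. Your threshold $2$ must be replaced by this $K$, and the weight there is at least $(2Ku(x))^{p-2}$.
\item As in the paper, apply Jensen's inequality to the convex decreasing map $t\mapsto t^{(p-2)/(p-1)}$. This bounds the average of the weight over $B_d(z)$ below by $c\,u(x)^{p-2}$.
\end{itemize}
In either case, $|x-y^\lambda|\le 2d+|x-y|\le\frac92 d$ for $y\in B_d(z)$, and \eqref{eq:whole-space-K-strip} follows.
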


\begin{proof}
For every \(\lambda\in\mathbb R\) and \(x\in\Sigma_\lambda^-\), take \(a=\lambda-x_1\), and set \(z:=x-\frac32ae_1\). Then \(B_a(z)\subset\Sigma_\lambda\) and \(x\in B_{2a}(z)\). Applying \cite[Lemma~3.3]{liu2025nontrivial} gives
\begin{equation}\label{eq:whole-space-average-u}
        \frac1{|B_a(z)|}\int_{B_a(z)}u(y)^{p-1}\,dy \le C_H(\operatorname*{inf}_{B_{2a}(z)}u)^{p-1}
        \le C u(x)^{p-1}.
\end{equation}
Using \eqref{eq:whole-space-average-u}, we obtain
\begin{equation}\label{eq:42}
\begin{aligned}
 & \frac1{|B_a(z)|}\int_{B_a(z)}
 \bigl(
 |u(x)-u(y)|+|u_\lambda(x)-u(y)|
 \bigr)^{p-1}\,dy   \\
   \le & \frac1{|B_a(z)|}\int_{B_a(z)}
 \bigl(
 |2u(x)+2u(y)|
 \bigr)^{p-1}\,dy  \le C u(x)^{p-1}.
 \end{aligned}
\end{equation}
Jensen's inequality gives
\[
\begin{aligned}
 &\frac1{|B_a(z)|}\int_{B_a(z)}
 \bigl(
 |u(x)-u(y)|+|u_\lambda(x)-u(y)|
 \bigr)^{p-2}\,dy\\
 &\quad\ge
 \left(
 \frac1{|B_a(z)|}\int_{B_a(z)}
 \bigl(
 |u(x)-u(y)|+|u_\lambda(x)-u(y)|
 \bigr)^{p-1}\,dy
 \right)^{\frac{p-2}{p-1}}
 \ge c u(x)^{p-2},
\end{aligned}
\]
where the last inequality follows from \eqref{eq:42}. For \(y\in B_a(z)\),
\[
\begin{aligned}
 |x-y^\lambda|
 &\le |x-x^\lambda|+|x^\lambda-y^\lambda|=2a+|x-y|\le2a+|x-z|+|z-y|\le\frac92a.
\end{aligned}
\]
Consequently,
\[
\begin{aligned}
 &\int_{\Sigma_\lambda}
 \frac{
 \bigl(
 |u(x)-u(y)|+|u_\lambda(x)-u(y)|
 \bigr)^{p-2}}
 {|x-y^\lambda|^{n+sp}}\,dy\\
 &\quad\ge
 \left(\frac92a\right)^{-n-sp}
 \int_{B_a(z)}
 \bigl(
 |u(x)-u(y)|+|u_\lambda(x)-u(y)|
 \bigr)^{p-2}\,dy\\
 &\quad\ge
 c_Ha^{-sp}u(x)^{p-2}
 =c_H(\lambda-x_1)^{-sp}u(x)^{p-2},
\end{aligned}
\]
which proves \eqref{eq:whole-space-K-strip}. If \(\lambda\le0\) and \(x\in\Sigma_\lambda^-\), then \(\lambda-x_1\le |x|\). Thus \eqref{eq:whole-space-K-far-negative-plane} follows directly from \eqref{eq:whole-space-K-strip}. If \(|x|\ge L_I\), then $\lambda-x_1\le|\lambda|+|x_1|\le2|x|$. Then \eqref{41} again follows from \eqref{eq:whole-space-K-strip}.
\end{proof}

\begin{proof}[Proof of Theorem~\ref{thm:general}]
We first move the plane in the fixed direction \(e_1\).

\smallskip
\noindent\emph{Step 1: $u_\lambda\ge u$ in $\Sigma_\lambda$ for $\lambda$ sufficiently negative.}

Suppose otherwise that $\Sigma_\lambda^-\neq \varnothing$. Then we can choose \(\varepsilon>0\) such that \(A_{\lambda,\varepsilon}\ne\varnothing\). Let $c_q=\max\{1,q\}$. Then
\begin{equation}\label{eq:whole-space-reaction-upper}
\begin{aligned}
 &\left\langle
 (-\Delta)^s_pu-(-\Delta)^s_pu_\lambda,
 \varphi_\varepsilon
 \right\rangle
 \le c_q\int_{A_{\lambda,\varepsilon}}
 u^{q-1}\psi
 \varphi_\varepsilon\,dx.
\end{aligned}
\end{equation}
By the decay condition \hyperref[eq:RD-prime]{\textup{(D)}}, there exists \(R_0>1\) such that
\begin{equation}\label{eq:whole-space-start-smallness}
        |x|^{sp}u(x)^{q-p+1}\le c_*
\end{equation}
for every \(\lambda\in\mathbb R\) and every \(x\in\Sigma_\lambda^-\) with \(|x|\ge R_0\). For \(\lambda\le-R_0\), \eqref{c}, \eqref{eq:whole-space-reflected-energy}, \eqref{eq:whole-space-K-far-negative-plane} and \eqref{eq:whole-space-start-smallness} imply
\[
\begin{aligned}
 &\left\langle
 (-\Delta)^s_pu-(-\Delta)^s_pu_\lambda,
 \varphi_\varepsilon
 \right\rangle \ge c_pc_H \int_{A_{\lambda,\varepsilon}}
 |x|^{-sp}u^{p-2}\psi
 \varphi_\varepsilon\,dx \\
 &\quad
\ge 2c_qc_*\int_{A_{\lambda,\varepsilon}}
 |x|^{-sp}u^{p-2}\psi\varphi_\varepsilon\,dx
 \\
 &\quad\ge 2c_q
\int_{A_{\lambda,\varepsilon}}
 u^{q-1}\psi
\varphi_\varepsilon\,dx,
\end{aligned}
\]
contradicting \eqref{eq:whole-space-reaction-upper}. Thus $\Sigma_\lambda^-=\varnothing$.

Define
\begin{equation*}
\begin{aligned}
 \lambda_0:=\sup\bigl\{\lambda\in\mathbb R:
&u_\mu\ge u\text{ in }\Sigma_\mu
 \text{ for every }\mu\le\lambda\bigr\}.
\end{aligned}
\end{equation*}

\smallskip
\noindent\emph{Step 2: If \(\lambda_0<+\infty\), then $u_{\lambda_0}\equiv u$.}

%By the definition of \(\lambda_0\) and passage to the limit from the left,
%\begin{equation}\label{eq:whole-space-order-at-lambda0}
%u_{\lambda_0}\ge u
%        \qquad\text{in }\Sigma_{\lambda_0}.
%\end{equation}
Suppose not. Then there exist \(B_r(x_*)\Subset\Sigma_{\lambda_0}\) and \(\eta>0\) such that $u_{\lambda_0}-u\ge2\eta$ in $B_r(x_*)$. By local uniform continuity, for some \(\delta_0>0\),
\begin{equation*}
        \psi\le-\eta
        \qquad\text{in }B_r(x_*)
\end{equation*}
whenever \(\lambda\in[\lambda_0,\lambda_0+\delta_0]\). Choose \(R\) so large that
\[
R\ge \max \{2(|\lambda_0|+\delta_0+1),R_0,|x_*|+r+1\},
 \]
then \eqref{41} holds whenever \(\lambda\in[\lambda_0,\lambda_0+\delta_0]\) and \(|x|\ge R\), and \eqref{eq:whole-space-start-smallness} holds
%\begin{equation}\label{eq:whole-space-far-smallness}
%|x|^{sp}u(x)^{q-p+1}\le c_*
%\end{equation}
for every \(\lambda\in[\lambda_0,\lambda_0+\delta_0]\) and every \(x\in\Sigma_\lambda^-\) with \(|x|\ge R\). Choose \(R_1>R\) so large that $x^\lambda\in B_{R_1}$ for every $x\in B_R\cup B_r(x_*)$ and $\lambda\in[\lambda_0,\lambda_0+\delta_0]$. Choose \(\rho\in(0,1)\) so small that
\begin{equation}\label{eq:whole-space-strip-smallness}
        \rho^{sp}\|u\|_{L^\infty(B_{R_1})}^{q-p+1}
        \le c_*.
\end{equation}
Fix \(\lambda\in(\lambda_0,\lambda_0+\delta_0]\) and \(\varepsilon>0\). We record the estimate needed in the remainder of this step.
\begin{lemma}\label{whole-lem}
There exists \(b_*>0\), independent of \(\lambda\) and \(\varepsilon\), such that
\begin{equation}\label{eq:whole-space-far-strip-coefficient}
 \begin{aligned}
 &\left\langle
 (-\Delta)^s_pu-(-\Delta)^s_pu_\lambda,
 \varphi_\varepsilon
 \right\rangle\\
 &\quad\ge
 b_*\int_{A_{\lambda,\varepsilon}\cap B_R
 \cap\{\lambda-x_1\ge\rho\}}
 \varphi_\varepsilon\,dx\\
 &\qquad+
 2c_q\int_{A_{\lambda,\varepsilon}\cap\{|x|\ge R\}}
 u^{q-1}\psi
 \varphi_\varepsilon\,dx+2c_q\int_{A_{\lambda,\varepsilon}\cap B_R
       \cap\{\lambda-x_1<\rho\}}
 u^{q-1}\psi
 \varphi_\varepsilon\,dx.
\end{aligned}
\end{equation}
\end{lemma}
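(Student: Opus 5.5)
Write $\langle(-\Delta)^s_pu-(-\Delta)^s_pu_\lambda,\varphi_\varepsilon\rangle=J_1+J_2'+J_2''$ as in the proof of Lemma~\ref{lem:reflection-inequality}. All three integrands are nonnegative, so each of the three terms on the right of \eqref{eq:whole-space-far-strip-coefficient} can be bounded by its own piece. We use $J_1$ for the ``interior'' region $A_{\lambda,\varepsilon}\cap B_R\cap\{\lambda-x_1\ge\rho\}$ and $J_2'$ for the other two regions. The sets are disjoint, so $J_2'$ splits as a sum over them.

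\emph{The anchor term.} This follows the proof of Lemma~\ref{lem:reflected-energy-coercivity}. For $x\in A_{\lambda,\varepsilon}$ and $y\in B_r(x_*)$ we have $\varphi_\varepsilon(y)=0$ and $\psi(y)\le-\eta$. By \eqref{eq:lemma6-monotonicity-p-less-2}, the $J_1$ integrand is at least
\[
c_p\bigl(|u(x)-u(y)|+|u_\lambda(x)-u_\lambda(y)|\bigr)^{p-2}\eta\,\varphi_\varepsilon(x).
\]
Here $x\in B_R$, $x^\lambda\in B_{R_1}$ and $y,y^\lambda\in B_{R_1}$, so the bracket is at most $4\|u\|_{L^\infty(B_{R_1})}$. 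Since $p\le2$, this gives a lower bound $C\eta\varphi_\varepsilon(x)$ with $C$ independent of $\lambda,\varepsilon$. The kernel difference is computed as in \eqref{eq:reflected-energy-kernel-linear}. For $x\in B_R$, $y\in B_r(x_*)$ we have $|x-y^\lambda|\le C(R_1)$. We also have $\lambda-x_1\ge\rho$ and $\lambda-y_1\ge\lambda_0-\sup_{B_r(x_*)}z_1>0$. Hence the kernel difference is bounded below by a positive constant depending on $\rho,R_1,r,x_*$. Integrating over $y\in B_r(x_*)$ gives $b_*\int\varphi_\varepsilon$ on the interior region.

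\emph{Far region and near-plane strip.} Both use \eqref{eq:whole-space-reflected-energy}, restricted to the relevant part of $A_{\lambda,\varepsilon}$. This is legitimate because the $J_2'$ integrand is pointwise $\ge$ the integrand in \eqref{eq:whole-space-reflected-energy}.
\begin{itemize}
\item For $|x|\ge R$ we have $\lambda\in[\lambda_0,\lambda_0+\delta_0]$ and $R\ge L_I$. So \eqref{41} gives inner integral $\ge c_H'|x|^{-sp}u^{p-2}$. By \eqref{eq:whole-space-start-smallness}, $|x|^{-sp}\ge c_*^{-1}u^{q-p+1}$. Then \eqref{c} gives $c_pc_H'\ge 2c_qc_*$, and the contribution is at least $2c_q\int u^{q-1}\psi\varphi_\varepsilon$.
\item For $x\in B_R$ with $\lambda-x_1<\rho$, use \eqref{eq:whole-space-K-strip}: the inner integral is $\ge c_H\rho^{-sp}u(x)^{p-2}$. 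Since $x\in B_R\subset B_{R_1}$ and $u(x)>0$ on $\Sigma_\lambda^-$, \eqref{eq:whole-space-strip-smallness} gives $\rho^{-sp}\ge c_*^{-1}u(x)^{q-p+1}$. By \eqref{c} the contribution is again $\ge 2c_q\int u^{q-1}\psi\varphi_\varepsilon$.
\end{itemize}

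Summing the three lower bounds gives \eqref{eq:whole-space-far-strip-coefficient}.

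\emph{Main obstacle.} The delicate point is the uniform upper bound on the weight $(|u(x)-u(y)|+|u_\lambda(x)-u_\lambda(y)|)^{p-2}$ in the anchor term. It needs all points $x,y,x^\lambda,y^\lambda$ to lie in $B_{R_1}$, which is exactly how $R_1$ was chosen. One must also check that the constant does not degenerate as $\lambda\downarrow\lambda_0$.
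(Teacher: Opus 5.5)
Your proposal is correct and follows essentially the same route as the paper. You decompose into $J_1+J_2'+J_2''$ and bound $J_1$ by restricting to $x$ in $A_{\lambda,\varepsilon}\cap B_R\cap\{\lambda-x_1\ge\rho\}$ and $y\in B_r(x_*)$, using \eqref{eq:lemma6-monotonicity-p-less-2} together with the mean-value lower bound on the kernel difference; you then treat the far region and the near-plane strip through the $J_2'$ estimate \eqref{eq:whole-space-reflected-energy} combined with \eqref{41}, \eqref{eq:whole-space-K-strip}, \eqref{eq:whole-space-start-smallness}, \eqref{eq:whole-space-strip-smallness} and \eqref{c}, with $J_2''\ge0$ discarded. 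Your explicit check that $x,y,x^\lambda,y^\lambda\in B_{R_1}$, which gives a weight bound uniform in $\lambda$ and $\varepsilon$, spells out what the paper compresses into ``as in \eqref{eq:32}''.
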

\begin{proof}[Proof of Lemma~\ref{whole-lem}]
Recall the decomposition $\left\langle (-\Delta)^s_pu-(-\Delta)^s_pu_\lambda, \varphi_\varepsilon \right\rangle=J_1+J'_2+J''_2$. We first estimate $J_1$. Restrict the integrand to \(x\in A_{\lambda,\varepsilon}\cap B_R \cap\{\lambda-x_1\ge\rho\}\) and \(y\in B_r(x_*)\).
%Since \(\psi\le-\eta\) in \(B_r(x_*)\), we have
%\(\varphi_\varepsilon(y)=0\) and
%\(\psi(x)-\psi(y)\ge\eta\). Moreover,
%\(x,y,x^\lambda,y^\lambda\) all lie in a fixed compact set,
%independent of \(\lambda\) and \(\varepsilon\). Hence
%\eqref{eq:lemma6-monotonicity-p-less-2} and the local boundedness of
%\(u\) imply
As in \eqref{eq:32}, we have
\[
\bigl[ G(u(x)-u(y))-G(u_\lambda(x)-u_\lambda(y)) \bigr](\varphi_\varepsilon(x)-\varphi_\varepsilon(y)) \ge c_1\eta\varphi_\varepsilon(x),
\]
where $c_1>0$ is independent of $\lambda$ and $\varepsilon$. By the mean value theorem, for some \(\xi\in\bigl(|x-y|^2,|x-y^\lambda|^2\bigr)\),
\[
\begin{aligned}
 \frac1{|x-y|^{n+sp}}-\frac1{|x-y^\lambda|^{n+sp}}
 =C(n,s,p)\xi^{-(n+sp)/2-1}
 (\lambda-x_1)(\lambda-y_1).
\end{aligned}
\]
Since \(\xi\le|x-y^\lambda|^2\le(R+R_1)^2\),
\[
\frac1{|x-y|^{n+sp}}-\frac1{|x-y^\lambda|^{n+sp}} \ge C \rho\, \dist\bigl(B_r(x_*),T_{\lambda_0}\bigr)=:c_2>0,
\]
where \(c_2\) is independent of $\lambda$ and $\varepsilon$. Consequently,
\[
\begin{aligned}
 J_1
 &\ge b_*
 \int_{A_{\lambda,\varepsilon}\cap B_R
       \cap\{\lambda-x_1\ge\rho\}}
 \varphi_\varepsilon(x)\,dx,
\end{aligned}
\]
where $b_*:=c_1 c_2 \eta|B_r(x_*)|>0$. This constant is independent of \(\lambda\) and \(\varepsilon\). Combining the estimate of \(J'_2\) in \eqref{eq:whole-space-reflected-energy} and \(J''_2\ge0\), we obtain
\begin{equation}\label{eq:whole-space-reflected-energy-anchor}
\begin{aligned}
 &\left\langle
 (-\Delta)^s_pu-(-\Delta)^s_pu_\lambda,
 \varphi_\varepsilon
 \right\rangle\\
 &\quad\ge
 b_*\int_{A_{\lambda,\varepsilon}\cap B_R
 \cap\{\lambda-x_1\ge\rho\}}
 \varphi_\varepsilon\,dx\\
 &\qquad+
 c_p\int_{A_{\lambda,\varepsilon}}
 \psi(x)\varphi_\varepsilon(x)
 \int_{\Sigma_\lambda}
 \frac{
 \bigl(
 |u(x)-u(y)|+|u_\lambda(x)-u(y)|
 \bigr)^{p-2}}
 {|x-y^\lambda|^{n+sp}}\,dy\,dx.
\end{aligned}
\end{equation}
For \(x\in A_{\lambda,\varepsilon}\cap B_R \cap\{\lambda-x_1<\rho\}\), \eqref{eq:whole-space-K-strip} and \eqref{eq:whole-space-strip-smallness} give
\begin{equation}\label{50}
  \begin{aligned}
&c_p\int_{\Sigma_\lambda}
 \frac{
 \bigl(
  |u(x)-u(y)|+|u_\lambda(x)-u(y)|
 \bigr)^{p-2}}
 {|x-y^\lambda|^{n+sp}}\,dy \ge c_pc_H(\lambda-x_1)^{-sp}u(x)^{p-2}\\
& \qquad \ge c_pc_H\rho^{-sp}u(x)^{p-2}
 \ge2c_q c_*\rho^{-sp}u(x)^{p-2}\\
& \qquad \ge2c_q\|u\|_{L^\infty(B_{R_1})}^{q-p+1}
 u(x)^{p-2} \ge 2 c_q u(x)^{q-1}.
\end{aligned}
\end{equation}
For $x\in A_{\lambda,\varepsilon}\cap\{|x|\ge R\}$, \eqref{41} and \eqref{eq:whole-space-start-smallness} yield
\begin{equation}\label{51}
\begin{aligned}
&c_p\int_{\Sigma_\lambda}
 \frac{
 \bigl(
  |u(x)-u(y)|+|u_\lambda(x)-u(y)|
 \bigr)^{p-2}}
 {|x-y^\lambda|^{n+sp}}\,dy \ge c_pc_H'|x|^{-sp}u(x)^{p-2}\\
&\quad\ge2 c_q c_*|x|^{-sp}u(x)^{p-2}
 \ge2c_q u(x)^{q-1}.
\end{aligned}
\end{equation}
Combining \eqref{eq:whole-space-reflected-energy-anchor}, \eqref{50} and \eqref{51}, we obtain \eqref{eq:whole-space-far-strip-coefficient}.
\end{proof}
Define
\[
\omega_R(t):=\sup_{\substack{x,y\in\overline{B_R}\\ |x-y|\le t}} |u(x)-u(y)|, \qquad R>0.
\]
As in the proof of \eqref{eq:step2-no-smp-linfty-smallness}, we have
%the uniform continuity of \(u\) gives
%\(\omega_{R_1}(t)\to0\) as \(t\downarrow0\).
%For \(x\in B_R\) with \(x_1<\lambda_0\),
%\[
%\begin{aligned}
% \psi(x)^+
% & \le\bigl[u(x^{\lambda_0})-u(x^\lambda)\bigr]^+ \le\bigl|u(x^{\lambda_0})-u(x^\lambda)\bigr|  \le\omega_{R_1}
%       \bigl(|x^{\lambda_0}-x^\lambda|\bigr)   \le\omega_{R_1}
%        \bigl(2(\lambda-\lambda_0)\bigr).
%\end{aligned}
%\]
%For \(x\in B_R\) with \(\lambda_0\le x_1<\lambda\),
%\[
%\begin{aligned}
% \psi(x)^+
% &\le\bigl|u(x)-u(x^\lambda)\bigr| \le\omega_{R_1}\bigl(|x-x^\lambda|\bigr) =\omega_{R_1}\bigl(2(\lambda-x_1)\bigr) \le\omega_{R_1}
%       \bigl(2(\lambda-\lambda_0)\bigr).
%\end{aligned}
%\]
\begin{equation}\label{eq:whole-space-core-smallness}
        \psi(x)^+
        \le\omega_{R_1}
        \bigl(2(\lambda-\lambda_0)\bigr)
        \qquad\text{for }x\in\Sigma_\lambda\cap B_R.
\end{equation}
%Up to a set of measure zero, we have the disjoint decomposition
%\[
%\begin{aligned}
%A_{\lambda,\varepsilon}
%={}&\bigl(A_{\lambda,\varepsilon}\cap B_R
%\cap\{\lambda-x_1\ge\rho\}\bigr)\\
%&\mathbin{\cup}
%\bigl(A_{\lambda,\varepsilon}\cap\{|x|\ge R\}\bigr)
%\mathbin{\cup}
%\bigl(A_{\lambda,\varepsilon}\cap B_R
%\cap\{\lambda-x_1<\rho\}\bigr).
%\end{aligned}
%\]
Lemma~\ref{lem:power-difference-all-q} and \eqref{eq:whole-space-core-smallness} yield
\begin{equation}\label{eq:whole-space-core-reaction-holder}
\begin{aligned}
 &\int_{A_{\lambda,\varepsilon}\cap B_R \cap\{\lambda-x_1\ge\rho\}}
 \bigl(u^q-u_\lambda^q\bigr)\varphi_\varepsilon\,dx\le
 C\,
 \omega_{R_1}\bigl(2(\lambda-\lambda_0)\bigr)^{\min\{1,q\}}
 \int_{A_{\lambda,\varepsilon}\cap B_R \cap\{\lambda-x_1\ge\rho\}}
 \varphi_\varepsilon\,dx,
\end{aligned}
\end{equation}
where $C=C(q,\|u\|_{L^\infty(B_{R_1})})>0$. Comparing \eqref{eq:whole-space-reaction-upper} with \eqref{eq:whole-space-far-strip-coefficient}, and subtracting from both sides the integral over $A_{\lambda,\varepsilon}\cap\{|x|\ge R\}$ and $A_{\lambda,\varepsilon}\cap B_R\cap\{\lambda-x_1<\rho\}$, gives
\[
\begin{aligned}
 b_*\int_{A_{\lambda,\varepsilon}\cap B_R
       \cap\{\lambda-x_1\ge\rho\}}
 \varphi_\varepsilon\,dx
 &+c_q \left(\int_{A_{\lambda,\varepsilon}\cap\{|x|\ge R\}}+ \int_{A_{\lambda,\varepsilon}\cap B_R
       \cap\{\lambda-x_1<\rho\}}\right)
\bigg( u^{q-1}\psi\varphi_\varepsilon\,dx\bigg)\\
% &\quad+q\int_{A_{\lambda,\varepsilon}\cap B_R
%       \cap\{\lambda-x_1<\rho\}}
% u^{q-1}\psi\varphi_\varepsilon\,dx\\
 &\le  C\,
 \omega_{R_1}\bigl(2(\lambda-\lambda_0)\bigr)^{\min\{1,q\}}
 \int_{A_{\lambda,\varepsilon}\cap B_R
       \cap\{\lambda-x_1\ge\rho\}}
 \varphi_\varepsilon\,dx,
\end{aligned}
\]
for every \(\lambda\in[\lambda_0,\lambda_0+\delta_0]\).
%Since $\omega_\infty\bigl(2(\lambda-\lambda_0)\bigr)\longrightarrow0$ as  $\lambda\downarrow\lambda_0$,
Thus
\[
\begin{aligned}
 &\Bigl[
 b_*-C
 \omega_{R_1}\bigl(2(\lambda-\lambda_0)\bigr)^{\min\{1,q\}}
 \Bigr]
 \int_{A_{\lambda,\varepsilon}\cap B_R \cap\{\lambda-x_1\ge\rho\}}
 \varphi_\varepsilon\,dx\\
 &\quad+c_q\int_{A_{\lambda,\varepsilon}\cap\{|x|\ge R\}}
 u^{q-1}\psi\varphi_\varepsilon\,dx\\
 &\quad+c_q\int_{A_{\lambda,\varepsilon}\cap B_R
       \cap\{\lambda-x_1<\rho\}}
 u^{q-1}\psi\varphi_\varepsilon\,dx
 \le0.
\end{aligned}
\]
By choosing \(\delta_0\) small enough, $C \omega_{R_1}\bigl(2(\lambda-\lambda_0)\bigr)^{\min\{1,q\}}$ is smaller than \(b_*\). Therefore each of the three integrals is zero, which implies \(|A_{\lambda,\varepsilon}|=0\). Hence \(A_{\lambda,\varepsilon}=\varnothing\). Since \(\varepsilon>0\) is arbitrary, $\Sigma_\lambda^-=\varnothing$ for every \(\lambda>\lambda_0\) sufficiently close to \(\lambda_0\), contradicting the definition of \(\lambda_0\).

%Therefore
%\begin{equation}\label{eq:whole-space-finite-alternative}
%u_{\lambda_0}=u
%        \qquad\text{in }\mathbb R^n
%\end{equation}
%whenever \(\lambda_0<+\infty\).

%\smallskip
%\noindent\emph{Step 3: the infinite alternative in the fixed direction.}
%If \(\lambda_0=+\infty\), then the definition of \(\lambda_0\) gives
%\(u_\lambda\ge u\) in \(\Sigma_\lambda\) for every
%\(\lambda\in\mathbb R\). Consequently, for every
%\(z\in e_1^\perp\), the function
%\[
%t\longmapsto u(z+te_1)
%\]
%is nondecreasing on \(\mathbb R\). Indeed, for \(t_1<t_2\), take
%\(\lambda=(t_1+t_2)/2\) and apply this inequality at \(z+t_1e_1\).
%Thus, in the fixed direction \(e_1\), either \(\lambda_0<+\infty\) and Step~2 gives a symmetry plane, or \(u\) is nondecreasing on every line parallel to
%\(e_1\).

\smallskip
\noindent\emph{Step 3: $\lambda_0<+\infty$.}

We claim that \(\lambda_0<+\infty\). Suppose, by contradiction, that \(\lambda_0=+\infty\). Then \(u_\lambda\ge u\) in \(\Sigma_\lambda\) for every \(\lambda\in\mathbb R\). Consequently, for every \(z\in e_1^\perp\), the function \(h_z(t):=u(z+te_1)\) is nondecreasing on \(\mathbb R\). We first show that \(h_z\) must be constant. Suppose otherwise. Then, for some \(z\in e_1^\perp\), there exist \(t_1<t_2\) such that \(h_z(t_1)<h_z(t_2)\). Since \(u\ge0\), this also implies \(h_z(t_2)>0\). For \(T>t_2\), let $x_T:=z+Te_1$, $\mu_T:=-\frac{T+t_1}{2}$. Here \(\mu_T\) is the parameter of a moving plane in the direction \(-e_1\). A direct calculation gives
\[
x_T\in\Sigma_{\mu_T,-e_1}, \qquad x_T^{\mu_T,-e_1}=z+t_1e_1,
\]
where $x^{\lambda,e}$ and $\Sigma_{\lambda,e}$ are defined in Section~\ref{pre}. Since \(h_z\) is nondecreasing,
\[
u(x_T)=h_z(T)\ge h_z(t_2)>h_z(t_1) =u(x_T^{\mu_T,-e_1}).
\]
Hence $x_T\in \Sigma^-_{\mu_T,-e_1}$.
%By continuity, there exists \(r_0\in(0,1)\) such that $  B_{r_0}(x_T)\subset \Sigma^-_{\mu_T,-e_1}$ and $ u\ge {h_z(t_2)}/{2}$ in $B_{r_0}(x_T)$.
Since \(|x_T|\to\infty\), it follows that
%\[
%\sup_{\mu\in\mathbb R}
% \operatorname*{sup}_{\substack{x\in\Sigma_{\mu,-e_1}^-\\
%|x|\ge |x_T|-1}} |x|^{\beta_*}u(x)  \ge
% \frac {h_z(t_2)}{2} \bigl(|x_T|-1\bigr)^{\beta_*}
% \longrightarrow+\infty,\qquad\text{where }\beta_*=\frac{sp}{q-p+1},
%\]
\[
|x_T|^{sp}u(x_T)^{q-p+1}  \ge  |x_T|^{sp} {h_z(t_2)}^{q-p+1} \longrightarrow+\infty,
\]
which contradicts the decay condition \hyperref[eq:RD-prime]{\textup{(D)}}, applied in the direction \(-e_1\). Therefore \(h_z\) is constant for every \(z\in e_1^\perp\), and hence
\begin{equation}\label{eq:whole-space-constant-on-e-lines}
        u(x+te_1)=u(x)
        \qquad
        \text{for every }x\in\mathbb R^n,\ t\in\mathbb R.
\end{equation}
There exist \(a,b\in\mathbb R^n\) such that $u(a)>u(b)$. Set $\tilde{e}:=\frac{b-a}{|b-a|}$. For \(T>0\), define \(a_T:=a+Te_1\), \(b_T:=b+Te_1\), and \(\nu_T:=(a_T\cdot\tilde{e}+b_T\cdot\tilde{e})/2\). Then
\[
a_T\in\Sigma_{\nu_T,\tilde{e}}, \qquad a_T^{\nu_T,\tilde{e}}=b_T.
\]
By \eqref{eq:whole-space-constant-on-e-lines},
\[
u(a_T)=u(a)>u(b)=u(b_T),
\]
and therefore \(a_T\in \Sigma^-_{\nu_T,\tilde{e}}\). Moreover, \(|a_T|\longrightarrow\infty\) and \(u(a_T)=u(a)>0\). As above, continuity shows that the strict inequality persists on a set of positive measure near \(a_T\). Hence
\[
\sup_{\nu\in\mathbb R}
 \operatorname*{sup}_{\substack{x\in\Sigma_{\nu,\tilde{e}}^-\\
|x|\ge |a_T|-1}} |x|^{sp}u(x)^{q-p+1} \longrightarrow+\infty,
\]
contradicting the decay condition \hyperref[eq:RD-prime]{\textup{(D)}}, applied in the direction \(\tilde{e}\). Thus \(\lambda_0<+\infty\), and Step~2 gives \(u_{\lambda_0}\equiv u\). The same proof applies, after a rotation, for each direction \(e\in \mathbb S^{n-1}\). By a geometric argument, one can verify that \(u\) is radially symmetric and radially nonincreasing about the point \((\lambda_0(e_1),\lambda_0(e_2),\cdots,\lambda_0(e_n))\).
\end{proof}

\section{Proof of Theorem~\ref{thm:critical}}\label{sec:critical-energy-proof}

We establish the decay condition \hyperref[eq:RD-prime]{\textup{(D)}} required
for applying the moving plane method in the critical case. In fact, we
obtain a stronger decay result.
%Moreover, the assumption
%$u\in\mathcal D^{s,p}(\mathbb R^n)$ in
%Lemma~\ref{lem:critical-finite-energy-decay} can be replaced by $u\in W_{\mathrm{loc}}^{s,p}(\mathbb R^n)\cap L^{p_s^*}(\mathbb R^n)$.

\begin{lemma}
\label{lem:critical-finite-energy-decay}
Let \(n\ge2\), \(0<s<1\) and \(1<p\le2\). Let $0\le u\in \mathcal D^{s,p}(\mathbb R^n)\cap C(\mathbb R^n)$ be a weak solution of \eqref{eq:whole-space-critical-equation}. Then
\begin{equation}\label{eq:critical-scale-decay}
        \lim_{R\to\infty}
        \sup_{|x|\ge R}|x|^{\beta_c}u(x)=0,\qquad \text{where } \beta_c:=\frac{n-sp}{p}.
\end{equation}
\end{lemma}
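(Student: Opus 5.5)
The plan is to combine a scale-invariant local $L^\infty$ estimate with the vanishing of the $L^{p_s^*}$ norm on exterior regions. Note first that $\beta_c=\frac{n-sp}{p}=\frac{n}{p_s^*}$ is exactly the exponent for which $\|u\|_{L^{p_s^*}}$ is invariant under the rescaling $u_\rho(z):=\rho^{\beta_c}u(x+\rho z)$. Moreover, $(-\Delta)^s_p$ is $(p-1)$-homogeneous and scales like $\rho^{sp}$, and one checks that $\beta_c(p-1)+sp=\beta_c(p_s^*-1)$. Hence $u_\rho$ is again a nonnegative weak solution of \eqref{eq:whole-space-critical-equation}, with
\[
\|u_\rho\|_{L^{p_s^*}(B_1)}=\|u\|_{L^{p_s^*}(B_\rho(x))}
\quad\text{and}\quad
\|u_\rho\|_{L^{p_s^*}(\mathbb R^n)}=\|u\|_{L^{p_s^*}(\mathbb R^n)}.
\]

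Fix $x$ with $|x|$ large and set $\rho:=|x|/2$. Then $B_\rho(x)\subset\{|y|\ge|x|/2\}$, so $\varepsilon(x):=\|u\|_{L^{p_s^*}(B_\rho(x))}\le\|u\|_{L^{p_s^*}(\{|y|\ge|x|/2\})}\to0$ as $|x|\to\infty$. The claim \eqref{eq:critical-scale-decay} is therefore equivalent to showing
\[
u_\rho(0)=\rho^{\beta_c}u(x)\le C\,\varepsilon(x)^{\gamma}
\]
for some $\gamma>0$, since $|x|^{\beta_c}u(x)=2^{\beta_c}\rho^{\beta_c}u(x)$.

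To prove this bound, I will apply the local boundedness estimate of De Giorgi--Nash--Moser type for weak subsolutions of $(-\Delta)^s_pv\le V|v|^{p-2}v$ (as in \cite{DiCastroKuusiPalatucci2016,Cozzi2017}, or via Brasco--Parini type estimates). Writing the equation for $v=u_\rho$ as $(-\Delta)^s_pv=Vv^{p-1}$ with $V=v^{p_s^*-p}$, we have $\|V\|_{L^{n/(sp)}(B_1)}=\|v\|_{L^{p_s^*}(B_1)}^{p_s^*-p}=\varepsilon^{p_s^*-p}$, which is small. The critical potential in $L^{n/(sp)}$ cannot be handled by a direct Moser iteration. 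Instead, I will first run one Moser step with test function $v\,\min(v,k)^{\beta p-p}\eta^p$: the smallness of $\|V\|_{L^{n/(sp)}}$ lets the right-hand side be absorbed via the fractional Sobolev inequality, which yields $v\in L^{r}(B_{1/2})$ for some $r>p_s^*$ with norm controlled by $\|v\|_{L^{p_s^*}(B_1)}+\mathrm{Tail}$. The potential then lies in $L^{t}$ with $t>n/(sp)$, and the standard iteration, or \cite[Theorem~8.1]{Cozzi2017}, gives
\[
\sup_{B_{1/4}}v\le C\Bigl(\|v\|_{L^{p_s^*}(B_{1/2})}+\mathrm{Tail}(v;0,1/2)\Bigr).
\]

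The tail term needs separate care, and I expect this to be the main obstacle. By Hölder, $\mathrm{Tail}(v;0,1/2)^{p-1}\lesssim\int_{|z|>1/2}v^{p-1}|z|^{-n-sp}\,dz\le C\|v\|_{L^{p_s^*}(\mathbb R^n)}^{p-1}$, which is bounded but not small, because the far region of $v$ includes $u$ near the origin. I would therefore split the tail into the part over $\{|x+\rho z|\ge|x|/4\}$, which is small by the $L^{p_s^*}$ tail of $u$, and the part over $\{|x+\rho z|<|x|/4\}$. On the latter we have $|z|\ge 1$ bounded away from the origin of the rescaled picture, and its contribution is $\lesssim\rho^{\beta_c(p-1)+sp}\rho^{-n-sp}\int_{B_{|x|/4}}u^{p-1}$. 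By Hölder this is at most $\rho^{\beta_c(p-1)-n}\,|x|^{n(1-(p-1)/p_s^*)}\|u\|_{p_s^*}^{p-1}\sim\rho^{\beta_c(p-1)-n+n-\beta_c(p-1)}=O(1)$, which is again not small. To get smallness I would split once more, at a fixed radius $R_1$: the mass of $u$ in $B_{R_1}$ contributes $\rho^{\beta_c(p-1)-n}C(R_1)\to0$, while the annulus $R_1\le|y|\le|x|/4$ contributes at most $C\|u\|_{L^{p_s^*}(|y|\ge R_1)}^{p-1}$, which is small once $R_1$ is large. Taking $|x|\to\infty$ first and then $R_1\to\infty$ yields $\sup_{B_{1/4}}u_\rho\to0$, and hence \eqref{eq:critical-scale-decay}.
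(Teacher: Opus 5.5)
Your argument is correct in outline, but it takes a different route from the paper's.

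\textbf{The paper's route.} The paper argues by contradiction. Assuming $|x_k|^{\beta_c}u(x_k)\ge\varepsilon$, it applies a doubling-type point selection to $m=u^{1/\beta_c}$ on $B_{|x_k|/4}(x_k)$. It then rescales at the selected point $y_k$ with $\rho_k=u(y_k)^{-1/\beta_c}$, so that $v_k(0)=1$ and $v_k\le 2^{\beta_c}$ on a fixed ball $B_\kappa$. The right-hand side $v_k^{p_s^*-1}$ is therefore bounded there. The tail only needs to be uniformly bounded, which follows from H\"older's inequality and the scale invariance of the $L^{p_s^*}$ norm. The H\"older estimate of \cite[Theorem~8.2]{Cozzi2017} then gives $v_k\ge 1/2$ on $B_\delta$. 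This places a fixed amount of $L^{p_s^*}$ mass inside $\{|x|\ge 3|x_k|/4\}$, contradicting $u\in L^{p_s^*}(\mathbb R^n)$.

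\textbf{Your route.} You rescale instead at the fixed scale $\rho=|x|/2$, and this forces you to supply two ingredients the paper never needs.
\begin{enumerate}
\item An $\varepsilon$-regularity $L^\infty$ bound for the \emph{critical} equation. This is your Brezis--Kato step, which uses smallness of $\|v^{p_s^*-p}\|_{L^{n/(sp)}(B_1)}=\|u\|_{L^{p_s^*}(B_\rho(x))}^{p_s^*-p}$. It is not covered by \cite[Theorem~8.1]{Cozzi2017}, whose growth assumption is subcritical. You would have to carry out a nonlocal Moser step with cut-offs and tail terms yourself. After that, the iteration with $V\in L^t$, $t>n/(sp)$, is standard. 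Its constants are uniform because the first step bounds $\|V\|_{L^t(B_{1/2})}$ uniformly, and the final estimate is linear in $\|v\|_{L^{p_s^*}}+\mathrm{Tail}$ by $(p-1)$-homogeneity.
\item Genuine smallness of the tail, not just boundedness. Your two-level splitting at $|y|=|x|/4$ and at a fixed radius $R_1$ does deliver this: the exponents cancel exactly as you computed, and $\beta_c(p-1)<n$ makes the $B_{R_1}$ contribution decay.
\end{enumerate}

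\textbf{Trade-off.} Your approach is direct and quantitative. It bounds $|x|^{\beta_c}u(x)$ by $\|u\|_{L^{p_s^*}(\{|y|\ge R_1\})}$ plus a term decaying like a power of $|x|$, with no contradiction argument. The paper's approach is softer: the doubling lemma gives local boundedness of the rescaled functions for free. As a result it needs only an off-the-shelf H\"older estimate with bounded right-hand side and a bounded, rather than small, tail.
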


\begin{proof}
Suppose that \eqref{eq:critical-scale-decay} fails. Then there exist \(\varepsilon>0\) and a sequence \(x_k\in\mathbb R^n\), with \(|x_k|\to\infty\), such that
\begin{equation}\label{eq:critical-decay-contradiction-sequence}
        |x_k|^{\beta_c}u(x_k)\ge\varepsilon.
\end{equation}
Set \(r_k:=|x_k|/4\) and \(m(x):=u(x)^{1/\beta_c}\). On \(\overline{B_{r_k}(x_k)}\), consider the continuous function \(x\mapsto m(x)\dist\bigl(x,\partial B_{r_k}(x_k)\bigr)\). It attains a positive maximum at some point \(y_k\in B_{r_k}(x_k)\). By maximality and \eqref{eq:critical-decay-contradiction-sequence},
\begin{equation}\label{eq:critical-point-selection-product}
  \begin{aligned}
 &m(y_k)\dist\bigl(y_k,\partial B_{r_k}(x_k)\bigr)\ge m(x_k)r_k
 =\frac14\bigl(|x_k|^{\beta_c}u(x_k)\bigr)^{1/\beta_c}
 \ge\frac{\varepsilon^{1/\beta_c}}4=:4\kappa.
\end{aligned}
\end{equation}
If \(|x-y_k|\le\kappa/m(y_k)\), then \eqref{eq:critical-point-selection-product} gives \(|x-y_k|\le \dist(y_k,\partial B_{r_k}(x_k))/4\). Consequently,
\[
\dist(x,\partial B_{r_k}(x_k))\ge \frac34\dist(y_k,\partial B_{r_k}(x_k)),
\]
and the maximality of \(y_k\) implies
\begin{equation}\label{eq:critical-point-selection-local-bound}
        m(x)\le\frac43m(y_k)<2m(y_k).
\end{equation}
Set $\rho_k:=u(y_k)^{-1/\beta_c}=m(y_k)^{-1}$, and define the rescaling
\[
v_k(z):=\frac{u(y_k+\rho_k z)}{u(y_k)}.
\]
By \eqref{eq:critical-point-selection-local-bound},
\begin{equation}\label{eq:critical-rescaled-local-bound}
        v_k(0)=1,
        \qquad
        0\le v_k\le2^{\beta_c}
        \quad\text{in }B_\kappa.
\end{equation}
The rescaled functions satisfy
\[
(-\Delta)^s_pv_k=v_k^{p_s^*-1} \qquad\text{weakly in }\mathbb R^n,
\]
and
\begin{equation}\label{eq:critical-rescaled-LP-invariance}
        \|v_k\|_{L^{p_s^*}(\mathbb R^n)}
        =\|u\|_{L^{p_s^*}(\mathbb R^n)}.
\end{equation}
%Indeed, the coefficient in the rescaled equation is
%\[
%u(y_k)^{p_s^*-p}\rho_k^{sp}=1,
%\]
%while
%\[
%u(y_k)^{-p_s^*}\rho_k^{-n}=1.
%\]
For a measurable function \(v\), define
\[
\operatorname{Tail}(v;0,R) :=\left( R^{sp}\int_{\mathbb R^n\setminus B_R} \frac{|v(z)|^{p-1}}{|z|^{n+sp}}\,dz \right)^{1/(p-1)}.
\]
H\"older's inequality, with exponents \(p_s^*/(p-1)\) and \(p_s^*/(p_s^*-p+1)\), together with \eqref{eq:critical-rescaled-LP-invariance}, yields
\begin{equation}\label{eq:critical-rescaled-tail-bound}
\begin{aligned}
 &\int_{\mathbb R^n\setminus B_{\kappa/2}}
 \frac{v_k(z)^{p-1}}{|z|^{n+sp}}\,dz\le
 \|v_k\|_{L^{p_s^*}(\mathbb R^n)}^{p-1}
 \left(
 \int_{\mathbb R^n\setminus B_{\kappa/2}}
 |z|^{-\frac{(n+sp)p_s^*}{p_s^*-p+1}}\,dz
 \right)^{\frac{p_s^*-p+1}{p_s^*}}
 \le C,
\end{aligned}
\end{equation}
where \(C\) is independent of \(k\). Thus \(\operatorname{Tail}(v_k;0,\kappa/2)\) is uniformly bounded. In view of \eqref{eq:critical-rescaled-local-bound}--\eqref{eq:critical-rescaled-tail-bound}, the local H\"older estimate in \cite[Theorem~8.2]{Cozzi2017} gives constants \(\gamma\in(0,1)\) and \(C_0>0\), independent of \(k\), such that
\begin{equation}\label{eq:critical-rescaled-holder-bound}
        [v_k]_{C^\gamma(B_{\kappa/4})}\le C_0.
\end{equation}
Choose $0<\delta\le\frac\kappa4$ so small that \(C_0\delta^\gamma\le1/2\). Since \(v_k(0)=1\), \eqref{eq:critical-rescaled-holder-bound} implies $v_k\ge\frac12$ in $B_\delta$. Hence
\begin{equation}\label{eq:critical-rescaled-positive-mass}
        \int_{B_{\delta\rho_k}(y_k)}u^{p_s^*}\,dx=\int_{B_\delta}v_k^{p_s^*}\,dz
        \ge 2^{-p_s^*}|B_\delta|=:c_0>0.
\end{equation}
Observe that
\[
B_{\delta\rho_k}(y_k)\subset B_{r_k}(x_k)=B_{|x_k|/4}(x_k) \subset\mathbb R^n\setminus B_{3|x_k|/4}.
\]
Therefore, because \(u\in L^{p_s^*}(\mathbb R^n)\),
\[
\int_{B_{\delta\rho_k}(y_k)}u^{p_s^*}\,dx \le \int_{|x|\ge3|x_k|/4}u^{p_s^*}\,dx \longrightarrow0.
\]
This contradicts \eqref{eq:critical-rescaled-positive-mass}, and proves \eqref{eq:critical-scale-decay}.
\end{proof}

\begin{proof}[Proof of Theorem~\ref{thm:critical}]
%By Lemma~\ref{lem:critical-LP-implies-Dsp}, we have \(u\in\mathcal D^{s,p}(\mathbb R^n)\).
For $q=p_s^*-1$, Lemma~\ref{lem:critical-finite-energy-decay} implies
\[
\begin{aligned}
        |x|^{sp}u(x)^{q-p+1}
        =\bigl(|x|^{\beta_c}u(x)\bigr)^{p_s^*-p}
        \longrightarrow0
        \qquad\text{as }|x|\to\infty.
\end{aligned}
\]
In particular, the decay condition \hyperref[eq:RD-prime]{\textup{(D)}} holds uniformly with respect to all directions and all positions of the moving plane. Since \(\mathcal D^{s,p}(\mathbb R^n)\subset W^{s,p}_{\mathrm{loc}}(\mathbb R^n)\cap L_{sp}^{p-1}(\mathbb R^n)\), all assumptions of Theorem~\ref{thm:general} are satisfied. The claimed radial symmetry and radial monotonicity therefore follow.

By Theorem~\ref{thm:general}, \(u\) is a radially symmetric and radially monotone solution of the critical Lane--Emden equation. Thus \eqref{eq:asy} follows from \cite[Remark~1.2]{BrascoMosconiSquassina2016}.
\end{proof}

\noindent{\bf Acknowledgements} Part of this work was completed while H. Yang was visiting The Chinese University of Hong Kong. He would like to thank Professor Juncheng Wei for his support and encouragement, as well as the Department of Mathematics and the Institute of Mathematical Sciences for their hospitality. The research of M. Xu is partially supported by the National Key R\&D Program of China 2025YFA1017600 and NSFC 12526202. The research of H. Yang is supported by NSFC 12301140 and the Shanghai Frontier Science Center of Modern Analysis.

\medskip

\noindent{\bf Data availability} No data were used in this study.

\medskip

\noindent{\bf Conflict of interest} There is no conflict of interest.

% \printbibliography
%或 amsalpha / amsabbrv 等
\bibliographystyle{amsplain}
\bibliography{refs}

\providecommand{\bysame}{\leavevmode\hbox to3em{\hrulefill}\thinspace}
\providecommand{\MR}{\relax\ifhmode\unskip\space\fi MR }
% \MRhref is called by the amsart/book/proc definition of \MR.
\providecommand{\MRhref}[2]{%
  \href{http://www.ams.org/mathscinet-getitem?mr=#1}{#2}
}
\providecommand{\href}[2]{#2}
\begin{thebibliography}{10}

\bibitem{BiswasRoySen2026}
A.~Biswas, S.~Roy, and A.~Sen, \emph{Strong comparison principle and symmetry
  results for the fractional {$p$}-{L}aplacian}, arXiv:2606.08559, 2026.

\bibitem{BiswasTopp2025}
A.~Biswas and E.~Topp, \emph{Lipschitz regularity of fractional
  {$p$}-{L}aplacian}, Ann. PDE \textbf{11} (2025), no.~2, Paper No. 27, 43 pp.

\bibitem{BogeleinEtAlCalcVar2025}
V.~B{\"o}gelein, F.~Duzaar, N.~Liao, G.~Molica Bisci, and R.~Servadei,
  \emph{Gradient regularity for {$(s,p)$}-harmonic functions}, Calc. Var.
  Partial Differential Equations \textbf{64} (2025), no.~8, Paper No. 253.

\bibitem{BogeleinEtAlJFA2025}
\bysame, \emph{Regularity for the fractional {$p$}-{L}aplace equation}, J.
  Funct. Anal. \textbf{289} (2025), no.~9, Paper No. 111078, 69 pp.

\bibitem{BG}
J.~P. Bouchaud and A.~Georges, \emph{Anomalous diffusion in disordered media:
  Statistical mechanisms, models and physical applications}, Phys. Rep.
  \textbf{195} (1990), 127--293.

\bibitem{BrascoLindgrenSchikorra2018}
L.~Brasco, E.~Lindgren, and A.~Schikorra, \emph{Higher {H}\"older regularity
  for the fractional {$p$}-{L}aplacian in the superquadratic case}, Adv. Math.
  \textbf{338} (2018), 782--846.

\bibitem{BrascoMosconiSquassina2016}
L.~Brasco, S.~Mosconi, and M.~Squassina, \emph{Optimal decay of extremals for
  the fractional {S}obolev inequality}, Calc. Var. Partial Differential
  Equations \textbf{55} (2016), no.~2, Paper No. 23, 32 pp.

\bibitem{CGS}
L.~Caffarelli, B.~Gidas, and J.~Spruck, \emph{Asymptotic symmetry and local
  behavior of semilinear elliptic equations with critical {Sobolev} growth},
  Comm. Pure Appl. Math. \textbf{42} (1989), no.~3, 271--297.

\bibitem{CaSi}
L.~Caffarelli and L.~Silvestre, \emph{An extension problem related to the
  fractional {L}aplacian}, Comm. Partial Differential Equations \textbf{32}
  (2007), 1245--1260.

\bibitem{CDQ}
D.~Cao, W.~Dai, and G.~Qin, \emph{Super poly-harmonic properties, {Liouville}
  theorems and classification of nonnegative solutions to equations involving
  higher-order fractional {Laplacians}}, Trans. Amer. Math. Soc. \textbf{374}
  (2021), no.~7, 4781--4813.

\bibitem{DiCastroKuusiPalatucci2014}
A.~Di Castro, T.~Kuusi, and G.~Palatucci, \emph{Nonlocal {H}arnack
  inequalities}, J. Funct. Anal. \textbf{267} (2014), no.~6, 1807--1836.
  \MR{3237774}

\bibitem{DiCastroKuusiPalatucci2016}
\bysame, \emph{Local behavior of fractional {$p$}-minimizers}, Ann. Inst. H.
  Poincar\'e C Anal. Non Lin\'eaire \textbf{33} (2016), no.~5, 1279--1299.
  \MR{3542614}

\bibitem{CL1}
W.~Chen and C.~Li, \emph{Classification of solutions of some nonlinear elliptic
  equations}, Duke Math. J. \textbf{63} (1991), no.~3, 615--622.

\bibitem{CL2}
\bysame, \emph{Maximum principles for the fractional {$p$}-{L}aplacian and
  symmetry of solutions}, Adv. Math. \textbf{335} (2018), 735--758.

\bibitem{CLL}
W.~Chen, C.~Li, and Y.~Li, \emph{A direct method of moving planes for the
  fractional {L}aplacian}, Adv. Math. \textbf{308} (2017), 404--437.

\bibitem{CLO3}
W.~Chen, C.~Li, and B.~Ou, \emph{Classification of solutions for an integral
  equation}, Comm. Pure Appl. Math. \textbf{59} (2006), no.~3, 330--343.

\bibitem{CLZ}
W.~Chen, Y.~Li, and R.~Zhang, \emph{A direct method of moving spheres on
  fractional order equations}, J. Funct. Anal. \textbf{272} (2017), no.~10,
  4131--4157.

\bibitem{CZ}
W.~Chen and J.~Zhu, \emph{Indefinite fractional elliptic problem and
  {L}iouville theorems}, J. Differential Equations \textbf{260} (2016),
  4758--4785.

\bibitem{Cozzi2017}
M.~Cozzi, \emph{Regularity results and {H}arnack inequalities for minimizers
  and solutions of nonlocal problems: A unified approach via fractional {D}e
  {G}iorgi classes}, J. Funct. Anal. \textbf{272} (2017), 4762--4837.

\bibitem{DLQSIAM}
W.~Dai, Z.~Liu, and G.~Qin, \emph{{Classification of nonnegative solutions to
  static Schr\"{o}dinger-Hartree-Maxwell type equations}}, SIAM J. Math. Anal.
  \textbf{53} (2021), no.~2, 1379--1410.

\bibitem{DaiLiuWang2022}
W.~Dai, Z.~Liu, and P.~Wang, \emph{Monotonicity and symmetry of positive
  solutions to fractional {$p$}-{L}aplacian equation}, Commun. Contemp. Math.
  \textbf{24} (2022), no.~6, Paper No. 2150005, 17 pp.

\bibitem{Damascelli1998}
L.~Damascelli, \emph{Comparison theorems for some quasilinear degenerate
  elliptic operators and applications to symmetry and monotonicity results},
  Ann. Inst. H. Poincar\'e C Anal. Non Lin\'eaire \textbf{15} (1998), no.~4,
  493--516.

\bibitem{DamascelliMerchanMontoroSciunzi2014}
L.~Damascelli, S.~Merch\'an, L.~Montoro, and B.~Sciunzi, \emph{Radial symmetry
  and applications for a problem involving the {$-\Delta_p(\cdot)$} operator
  and critical nonlinearity in {$\mathbb{R}^N$}}, Adv. Math. \textbf{265}
  (2014), 313--335.

\bibitem{DamascelliPacella1998}
L.~Damascelli and F.~Pacella, \emph{Monotonicity and symmetry of solutions of
  {$p$}-{L}aplace equations, {$1<p<2$}, via the moving plane method}, Ann.
  Scuola Norm. Sup. Pisa Cl. Sci. (4) \textbf{26} (1998), no.~4, 689--707.
  \MR{1648566}

\bibitem{DamascelliPacellaRamaswamy1999}
L.~Damascelli, F.~Pacella, and M.~Ramaswamy, \emph{Symmetry of ground states of
  {$p$}-{L}aplace equations via the moving plane method}, Arch. Ration. Mech.
  Anal. \textbf{148} (1999), no.~4, 291--308. \MR{1716666}

\bibitem{DamascelliSciunzi2004}
L.~Damascelli and B.~Sciunzi, \emph{Regularity, monotonicity and symmetry of
  positive solutions of {$m$}-{L}aplace equations}, J. Differential Equations
  \textbf{206} (2004), no.~2, 483--515.

\bibitem{FW1}
M.~Fall and T.~Weth, \emph{Nonexistence results for a class of fractional
  elliptic boundary value problems}, J. Funct. Anal. \textbf{263} (2012),
  no.~8, 2205--2227.

\bibitem{FarinaMontoroSciunzi2012}
A.~Farina, L.~Montoro, and B.~Sciunzi, \emph{Monotonicity and one-dimensional
  symmetry for solutions of {$-\Delta_pu=f(u)$} in half-spaces}, Calc. Var.
  Partial Differential Equations \textbf{43} (2012), no.~1--2, 123--145.

\bibitem{FarinaMontoroSciunzi2013}
\bysame, \emph{Monotonicity of solutions of quasilinear degenerate elliptic
  equations in half-spaces}, Math. Ann. \textbf{357} (2013), no.~3, 855--893.

\bibitem{GarainLindgren2024}
P.~Garain and E.~Lindgren, \emph{Higher {H}\"older regularity for the
  fractional {$p$}-{L}aplace equation in the subquadratic case}, Math. Ann.
  \textbf{390} (2024), no.~4, 5753--5792.

\bibitem{GNN}
B.~Gidas, W.-M. Ni, and L.~Nirenberg, \emph{Symmetry and related properties via
  the maximum principle}, Comm. Math. Phys. \textbf{68} (1979), 209--243.

\bibitem{GiovagnoliJesusSilvestre2025}
D.~Giovagnoli, D.~Jesus, and L.~Silvestre, \emph{{$C^{1+\alpha}$} regularity
  for fractional {$p$}-harmonic functions}, arXiv:2509.26565, 2025.

\bibitem{MR4788673}
A.~Iannizzotto and S.~Mosconi, \emph{Fine boundary regularity for the singular
  fractional {$p$}-{L}aplacian}, J. Differential Equations \textbf{412} (2024),
  322--379. \MR{4788673}

\bibitem{MR4985486}
\bysame, \emph{On a doubly sublinear fractional {$p$}-{L}aplacian equation},
  NoDEA Nonlinear Differential Equations Appl. \textbf{33} (2026), no.~1, Paper
  No. 14, 24 pp. \MR{4985486}

\bibitem{MR4604546}
A.~Iannizzotto, S.~Mosconi, and N.~S. Papageorgiou, \emph{On the logistic
  equation for the fractional {$p$}-{L}aplacian}, Math. Nachr. \textbf{296}
  (2023), no.~4, 1451--1468. \MR{4604546}

\bibitem{IMS2016}
A.~Iannizzotto, S.~J.~N. Mosconi, and M.~Squassina, \emph{Global {H}\"older
  regularity for the fractional {$p$}-{L}aplacian}, Rev. Mat. Iberoam.
  \textbf{32} (2016), no.~4, 1353--1392.

\bibitem{JW}
S.~Jarohs and T.~Weth, \emph{Symmetry via antisymmetric maximum principles in
  nonlocal problems of variable order}, Ann. Mat. Pura Appl. (4) \textbf{195}
  (2016), no.~1, 273--291.

\bibitem{JLX}
T.~Jin, Y.~Y. Li, and J.~Xiong, \emph{On a fractional {Nirenberg} problem,
  {Part I}: {Blow} up analysis and compactness of solutions}, J. Eur. Math.
  Soc. (JEMS) \textbf{16} (2014), no.~6, 1111--1171.

\bibitem{JX}
T.~Jin and J.~Xiong, \emph{A fractional yamabe flow and some applications}, J.
  Reine Angew. Math. \textbf{696} (2014), 187--223.

\bibitem{li2025direct}
C.~Li, M.~Xu, H.~Yang, and R.~Zhuo, \emph{The direct moving sphere for
  fractional {L}aplace equation}, J. Funct. Anal. \textbf{289} (2025), no.~8,
  Paper No. 111010, 29 pp. \MR{4897647}

\bibitem{Ly}
Y.~Y. Li, \emph{Remark on some conformally invariant integral equations: the
  method of moving spheres}, J. Eur. Math. Soc. (JEMS) \textbf{6} (2004),
  no.~2, 153--180. \MR{2055032}

\bibitem{liu2025nontrivial}
L.~Liu, \emph{Nontrivial nonnegative weak solutions to fractional
  {$p$}-{L}aplace inequalities and equations}, arXiv:2501.11989, 2025.

\bibitem{MalekRajagopalRuzicka1995}
J.~M\'alek, K.~R. Rajagopal, and M.~R\r{u}\v{z}i\v{c}ka, \emph{Existence and
  regularity of solutions and the stability of the rest state for fluids with
  shear dependent viscosity}, Math. Models Methods Appl. Sci. \textbf{5}
  (1995), no.~6, 789--812.

\bibitem{MJ}
R.~Metzler and J.~Klafter, \emph{The random walk's guide to anomalous
  diffusion: A fractional dynamics approach}, Phys. Rep. \textbf{339} (2000),
  1--77.

\bibitem{MR2944369}
E.~Di Nezza, G.~Palatucci, and E.~Valdinoci, \emph{Hitchhiker's guide to the
  fractional {S}obolev spaces}, Bull. Sci. Math. \textbf{136} (2012), no.~5,
  521--573. \MR{2944369}

\bibitem{OlivaSciunziVaira2020}
F.~Oliva, B.~Sciunzi, and G.~Vaira, \emph{Radial symmetry for a quasilinear
  elliptic equation with a critical {Sobolev} growth and {Hardy} potential}, J.
  Math. Pures Appl. (9) \textbf{140} (2020), 89--109.

\bibitem{Sciunzi2005}
B.~Sciunzi, \emph{A weak maximum principle for the linearized operator of
  {$m$}-{L}aplace equations with applications to a nondegeneracy result}, Adv.
  Differential Equations \textbf{10} (2005), no.~2, 223--240.

\bibitem{Sciunzi2016}
\bysame, \emph{Classification of positive
  {$\mathcal{D}^{1,p}(\mathbb{R}^{N})$}-solutions to the critical
  {$p$}-{L}aplace equation in {$\mathbb{R}^{N}$}}, Adv. Math. \textbf{291}
  (2016), 12--23.

\bibitem{Serrin1971}
J.~Serrin, \emph{A symmetry problem in potential theory}, Arch. Ration. Mech.
  Anal. \textbf{43} (1971), 304--318.

\bibitem{SerrinZou1999}
J.~Serrin and H.~Zou, \emph{Symmetry of ground states of quasilinear elliptic
  equations}, Arch. Ration. Mech. Anal. \textbf{148} (1999), no.~4, 265--290.

\bibitem{Vazquez2020Evolution}
J.~L. V{\'a}zquez, \emph{The evolution fractional {$p$}-{L}aplacian equation in
  {$\mathbb R^N$}: Fundamental solution and asymptotic behaviour}, Nonlinear
  Anal. \textbf{199} (2020), Paper No. 112034, 32 pp.

\bibitem{Vazquez2021Sublinear}
\bysame, \emph{The fractional {$p$}-{L}aplacian evolution equation in {$\mathbb
  R^N$} in the sublinear case}, Calc. Var. Partial Differential Equations
  \textbf{60} (2021), no.~4, Paper No. 140, 59 pp.

\bibitem{WuChen2020}
L.~Wu and W.~Chen, \emph{The sliding methods for the fractional
  {$p$}-{L}aplacian}, Adv. Math. \textbf{361} (2020), Paper No. 106933, 26 pp.

\bibitem{WuYuZhang2021}
L.~Wu, M.~Yu, and B.~Zhang, \emph{Monotonicity results for the fractional
  {$p$}-{L}aplacian in unbounded domains}, Bull. Math. Sci. \textbf{11} (2021),
  no.~2, Paper No. 2150003.

\bibitem{Y}
H.~Yang, \emph{{Liouville-type theorems and radial symmetry for the Yamabe and
  Hardy-H\'enon equations involving higher-order fractional Laplacians}}, Sci.
  China Math. \textbf{69} (2026), DOI
  10.1007/s11425\char45{}025\char45{}2642\char45{}8.

\bibitem{ZCCY}
R.~Zhuo, W.~Chen, X.~W. Cui, and Z.~Yuan, \emph{Symmetry and non-existence of
  solutions for a nonlinear system involving the fractional {Laplacian}},
  Discrete Contin. Dyn. Syst. \textbf{36} (2016), 1125--1141.

\end{thebibliography}

\bigskip

\noindent M. Xu

\noindent  School of Mathematical Sciences, Fudan University\\
Shanghai 200433, China \\[1mm] Email:  \textsf{meiqing\_xu@fudan.edu.cn}

\bigskip

\noindent   H. Yang

\noindent  School of Mathematical Sciences, Shanghai Jiao Tong University\\
Shanghai 200240, China \\[1mm] Email:  \textsf{hui-yang@sjtu.edu.cn}

\end{document}